\documentclass[pdflatex,sn-mathphys-num,12pt]{sn-jnl}

\usepackage{xcolor}
\usepackage{array}
\usepackage{graphicx}%
\usepackage{multirow}%
\usepackage{amsmath,amssymb,amsfonts}%
\usepackage{amsthm}%
\usepackage{mathrsfs}%
\usepackage[title]{appendix}%
\usepackage{xcolor}%
\usepackage{textcomp}%
\usepackage{manyfoot}%
\usepackage{booktabs}%
\usepackage{algorithm}%
\usepackage{algorithmicx}%
\usepackage{algpseudocode}%
\usepackage{listings}%

\theoremstyle{thmstyleone}%
\newtheorem{theorem}{Theorem}
\newtheorem{proposition}{Proposition}

\theoremstyle{thmstyletwo}%
\newtheorem{remark}{Remark}%

\theoremstyle{thmstylethree}%
\newtheorem{definition}{Definition}%

\usepackage{geometry}
\usepackage{enumerate}
\usepackage{amssymb,mathrsfs,amsthm,multirow}
\usepackage{extarrows}
\usepackage{mathtools}
\usepackage{xcolor}
\usepackage{graphicx}
\usepackage{subfigure}
\usepackage{chngcntr}
\usepackage[inline]{enumitem}
\usepackage{amsmath,cases,url}
\usepackage{bm}

\newcommand{\dd}{\mathsf {d\kern -0.07em l}} 
\newtheorem{lemma}{Lemma}
\newtheorem{corollary}{Corollary}

\newtheorem{assumption}{Assumption}
\usepackage{color,colortbl,xcolor}
\usepackage{hyperref}
\usepackage{adjustbox}

\begin{document}

\title[Eliciting Von Neumann–Morgenstern utility from discrete choices with response error ]{Eliciting Von Neumann–Morgenstern utility from discrete choices with response error }


\author[1]{\fnm{Bo} \sur{Chen}}\email{chenbo1211@stu.xjtu.edu.cn}

\author*[1]{\fnm{Jia} \sur{Liu}}\email{jialiu@xjtu.edu.cn}


\affil[1]{\orgdiv{School of Mathematics and Statistics}, \orgname{Xi'an Jiaotong University}, 
\city{ Xi'an}, 
\state{Shaanxi}, 
\country{ 
China}}




\abstract{
We develop a preference elicitation method for {\color{black}a} Von Neumann-Morgenstern (VNM)-type decision-maker 
{\color{black}from} pairwise comparison data in the presence of response errors.
We apply the maximum likelihood estimation (MLE) method to {\color{black}jointly} elicit the 
non-parametric 
systematic VNM utility function
and the scale parameter of the response error, assuming a Gumbel distribution. 
We {\color{black}incorporate 
structural} preference information known 
in advance 
about the decision-maker's
risk {\color{black}attitude}
through 
{\color{black}linear constraints}
on the 
utility function, 
{\color{black}including} monotonicity, concavity, {\color{black}and} Lipschitz continuity.
{\color{black}Under discretely distributed lotteries}, the resulting MLE problem can be reformulated as a convex program.
We derive {\color{black}finite-sample}
error
{\color{black}bounds} between the MLE 
and the true parameters, and establish quantitative convergence of the MLE-based VNM utility function to the true utility function in the sense of the Kolmogorov distance under 
{\color{black}
some conditions on the lotteries. These conditions may have potential applications in the design of efficient lotteries for preference elicitation.}
We {\color{black}further show}
that the optimization problem maximizing the expected MLE-based VNM utility is robust against the response error and estimation error in a probabilistic sense. 
{\color{black}
Numerical experiments in a portfolio optimization application illustrate and support the theoretical results.
}


}


\keywords{
Von Neumann–Morgenstern expected utility, Pairwise comparison, 
Preference elicitation, Response error, 
Additive random utility model
}



\maketitle


\section{Introduction}


Discrete choices arise when a decision-maker 
(DM) selects from a finite set of alternatives and are typically observed through pairwise or $k$-wise comparisons or ratings.   
{\color{black}Among these, pairwise comparisons are the most common and} 
play an important role in understanding and modeling individual preferences in behavioral economics and marketing. 
Such pairwise comparison data are widely used to elicit, for example, a reward function in Large Language {\color{black}Models}~\citep{sun2025rethinking}, an ordinal utility function in recommendation systems \citep{fan2025ranking}, or a cardinal utility function in financial decision-making {\color{black}problems} \citep{chen2026robo}, all of which are well-known quantitative representations of a DM's preference. 
In practice, {\color{black}however,}
information about 
{\color{black}the DM's}
preferences is often incomplete; thus, preference elicitation 
is 
a crucial step toward learning the 
decision-making process. 



A major line of research on preference elicitation assumes that the DM provides consistent pairwise comparison {\color{black}responses} (i.e., 
making the same choice among the alternatives in identical situations) and that there exists a real-valued utility function representing the DM's true preferences {\color{black}and hence observed choices}. 
{\color{black}Under this assumption, consistent pairwise comparison data can be used to construct polyhedral ambiguity sets. 
These sets can then be exploited to generate new queries to shrink the ambiguity set, or to support preference robust optimization (PRO) for decision making based on the worst-case utility function within the ambiguity set.}
{\color{black}From the perspective of questionnaire generation, \citet{toubia2004polyhedral,toubia2003fast} propose a 
class
of polyhedral methods 
that leverage geometric properties to generate informative queries that more effectively elicit a}
multi-attribute linear utility function. 
{\color{black}\citet{zhang2025modified} propose a modified polyhedral method to elicit a DM’s nonlinear univariate utility function, which does not rely on explicit information about the functional shape
or the inflection point of the utility function.}
{\color{black}\citet{armbruster2015decision}, as well as \citet{hu2015robust}, are among the first
to incorporate pairwise comparison data into PRO under a non-parametric Von Neumann-Morgenstern (VNM) utility framework.}
A major contribution {\color{black}of this line of work}
is to incorporate {\color{black}\emph{structural preference information} about the DM's VNM utility function} known in advance, in addition to the {\color{black}consistent} pairwise comparison data, 
in the decision-making problem. 
{\color{black}Such 
structural preference information} 
may include for example, monotonicity consistent with first-order stochastic dominance~\cite{dentcheva2007stability} and concavity consistent with second-order stochastic dominance~\cite{rudolf2008optimization}. 
{\color{black}
Numerous 
studies within the PRO framework incorporate further preference information (e.g., Lipschitz continuity and moment-type conditions) into the DM’s utility function and develop tractable reformulations of the resulting utility-maximization problems, together with theoretical analysis; see, e.g.,~\cite{chen2026robo,hu2017optimization,hu2019data,guo2024utility,hu2025distributional,wu2025bi,liu2025preference,vayanos2020robust,zhang2025modified,haskell2022preference}. 
Meanwhile, studies on robust risk optimization adopt a {\color{black}closely related
paradigm;}
see, e.g.,~\cite{delage2022shortfall,guo2022robust,zhang2021preference}.}
{\color{black}However, this line of research
pays limited attention to random utility functions and random response errors.}
{\color{black}In practice,}
the DM 
sometimes 
exhibits 
{\color{black}inconsistent choices in pairwise comparisons, even under repeated queries.}
Empirical evidence measured in a model-free manner shows that consistency in risky choices is 
{\color{black}about}
75–85\% on average \citep{hey2001does}. 
There are two common sources of {\color{black}such inconsistent choices/behavior in the DM's choices}: state-dependent utility and response error. 
In some cases, the DM's utility function is 
{\color{black}assumed}
to vary with the environment, referred to as the state {\color{black}or the contextual variable}. 
Under this assumption, the modeler may assign a set of deterministic utility functions to each state, together with a probability measure over the random states.~\footnote{
This is known as state-dependent utility functions \citep{schervish1990state}. 
{\color{black}\citet{liu2025preference} study the time consistency and tractable solution methods of multi-stage preference robust optimization problems with state-dependent or state-independent utility functions.
They also study how to elicit a 
state-dependent utility function or an ambiguity set of state-dependent utility functions from historical scoring data.}
In the binary logit framework, state-dependent utility can be 
{\color{black}viewed}
as introducing randomness into the systematic utility, which leads to the common 
mixed logit model~\citep{train2009discrete}. 
The discussion of state-dependent utility functions or mixed logit models is beyond the scope of this paper. }

In some other cases, it is believed that the DM holds a true discrete choice model or a deterministic utility function representing her/his underlying true preference. 
{\color{black}However,}
she/he might provide 
{\color{black}responses that are}
biased 
{\color{black}relative to}
the true preferences when responding to a query, known as response error \citep{mcfadden1974conditional}. 
The most 
{\color{black}widely used}
discrete choice model accounting for response error is the Bradley-Terry-Luce (BTL) model \citep{luce1959individual}, also known as the binary logit model. 
The probability that the DM chooses alternative $i$ over alternative $j$ is modeled as: 
$$
\mathbb{P}[i \text{ is chosen over } j]=\frac{e^{V_i}}{e^{V_i}+e^{V_j}},  
$$
where $V_i$ ($V_j$, resp.) denotes the preference score assigned by the DM to alternative $i$ {\color{black}($j$, resp.)} based on 
{\color{black}the}
observed deterministic attributes/outcomes.~\footnote{In the field of statistics, 
{\color{black}numerous}
studies on the top-$K$ ranking problem employ the BTL model to analyze pairwise or $k$-wise comparison data over 
a finite set of items~\cite{chen2022partial,fan2025ranking}, where statistical inference is conducted to derive asymptotic guarantees for the estimated preference scores.}
The BTL model can be 
{\color{black}derived}
from the well-known \textit{additive random utility model} {\color{black}(ARUM) in the econometric literature~\cite{mcfadden1974conditional,fosgerau2013choice,sorensen2022mcfadden,train2009discrete}.} 
{\color{black}A common practice for ARUM is to model the random utility $U_i$}
as the sum of the \textit{systematic component} $V_i$ and a Gumbel-distributed \textit{random response error}~\cite{ben1985discrete,train2009discrete}:
\begin{equation}
   U_i=V_i+{\rm Gumbel}(0,\sigma), \label{eq-btl} 
\end{equation}
and alternative $i$ is chosen over alternative $j$ if and only if $U_i > U_j$.
{\color{black}
Throughout this paper, we focus on a single-DM setting in which response errors may arise due to limited attention, mood variation, imperfect perception of the queries, or other incidental personal factors, leading to occasional inconsistencies across 
choices.~\footnote{\color{black}
Note that the additive random utility model in \eqref{eq-btl} can also be used to interpret choice behavior in a multi-DM (population) setting, where the systematic component captures the homogeneous consensus shared across the population and the error term captures heterogeneity and idiosyncratic shocks across individuals.}
The Gumbel assumption can be justified as an approximation to the normal distribution~\cite{ben1985discrete}, and it features slightly heavier tails than the normal distribution, allowing for more aberrant behavior~\cite{train2009discrete}.
}
Preference elicitation 
{\color{black}for the}
BTL model,  
{\color{black}as well as for related additive random utility models with alternative systematic components and error distributions,}
have been widely studied. 
{\color{black} Regarding}
the systematic 
{\color{black}component $V_i$}, 
most studies focus on the finite-attribute
setting~\cite{chen2022partial}, the linear 
{\color{black}specification}
$V_i=\beta^\top x_i$~\cite{saure2019ellipsoidal}, 
or a non-linear function 
with pre-specified structure~\cite{blavatskyy2007stochastic}. 
\citet{matzkin1991semiparametric} 
studies the preference elicitation of 
{\color{black}an}
additive random utility 
{\color{black}model}
with a
non-parametric systematic utility (without specifying a parametric form), where 
only structural 
{\color{black}constraints}
such as monotonicity and concavity are 
{\color{black}imposed}. 
\footnote{
\citet{matzkin1991semiparametric} develops a two-step elicitation method: 
first estimating systematic utility values 
{\color{black}via}
constrained MLE 
with an increasing and concave utility function over 
{\color{black}a finite set of}
attributes,  
and then interpolating a non-parametric
function from these values. 
This method has been extended to non-parametric estimation of additive random utility 
{\color{black}models}
with distribution-free 
error terms~\cite{matzkin1992nonparametric,briesch2010nonparametric} and 
non-additive random utility functions \citep{matzkin2003nonparametric,matzkin2019constructive}. 
}
The BTL model 
{\color{black}estimated}
from pairwise or $k$-wise comparison data has also been used to learn an agent's reward function 
{\color{black}within}
the random utility framework in Reinforcement Learning from Human Feedback (RLHF). 
\footnote{
Linear reward functions 
{\color{black}over}
state-action pairs 
with a vector parameter 
are the most widely used in RLHF 
\citep{zhu2023principled,li2023reinforcement}.  
\citet{zhan2023provable} instead 
{\color{black}employ}
parameterized neural networks to approximate 
reward functions. 
}

When the DM is presented with random alternatives (i.e., risky lotteries), the corresponding choice model involves both the 
{\color{black}outcome uncertainty arising}
from the realization of lotteries and 
the response error 
{\color{black}from}
the DM, 
{\color{black}making the model}
more complicated than 
{\color{black}in settings}
with deterministic outcomes. 
For discrete choices over risky lotteries 
{\color{black}in the absence of}
response error, the well-known Von Neumann–Morgenstern (VNM) expected utility theory
provides a flexible modeling framework for capturing the DM’s preferences over random gains and losses through a VNM utility function.~\footnote{
A DM is VNM-rational, i.e., her/his preferences satisfy four axioms (completeness, transitivity, continuity, and independence), if and only if she/he acts to maximize the 
{\color{black}expected VNM utility}
over 
risky lotteries~\cite{von1947theory}. 
}
This raises the question of whether a 
{\color{black}VNM-type utility representation} can be established for discrete choices over risky lotteries in the presence of {\color{black}
response error}.  
\citet{eliashberg1985measurement} 
introduce response error in 
{\color{black}
preference elicitation over risky lotteries}
by positing a 
{\color{black}VNM utility} function $u(x, r)$, where the {\color{black}random risk parameter} $r$ 
{\color{black}is assumed to follow}
a parametric distribution $f(r|\lambda)$,  {\color{black}thereby capturing measurement error.}
\citet{gul2006random} 
{\color{black}provide an}
axiomatic foundation 
{\color{black}for}
applying VNM expected utility theory to 
{\color{black}discrete choices under random utility}, 
showing that 
a random choice rule maximizing 
{\color{black}expected utility for}
some 
VNM utility functions if and only if it is mixture continuous, monotone, extreme, and linear.

Based on the 
{\color{black}VNM expected utility framework with randomness}, some studies have explored preference elicitation from pairwise comparisons 
{\color{black}of}
risky lotteries 
{\color{black}in the presence of}
response error. 
Analogous to the BTL model and 
{\color{black}the}
additive random utility model,  
a common approach 
adds an error term to the expected utility, 
{\color{black}treating expected utility as the}
\textit{systematic component} 
{\color{black}and the error term as the}
\textit{random component}, 
{\color{black}thereby yielding}
the DM's random utility.~\footnote{
{\color{black}In parallel with work on}
additive 
response error, Bayesian approaches 
{\color{black}have also been}
widely 
{\color{black}used}
to elicit or update the DM's random utility functions \citep{viappiani2010optimal,saure2019ellipsoidal}. 
However, most research is limited to 
{\color{black}settings with a finite number of states/outcomes}, 
since identifying a non-parametric utility function poses significant challenges when computing the posterior distribution of the random function under an infinite-dimensional Bayesian framework \citep{gu2018nonparametric}.
In this paper, we limit our discussion to the frequentist analysis of 
VNM utility 
models
with additive response error. 
}
\citet{hey1994investigating} study finite pointwise VNM utility 
{\color{black}elicitation}
for discrete choice over finitely supported lotteries 
{\color{black}by}
incorporating a standard normally distributed error term added to the expected utility. 
\citet{buschena2000generalized} 
further 
assume that the error term is heteroskedastic, with its standard deviation varying across decision problems. 
\citet{blavatskyy2007stochastic} 
develops a model in which 
the systematic utility 
{\color{black}is assumed to follow}
a pre-specified power form and the additive error term 
{\color{black}is assumed to follow}
a truncated normal distribution. 


In this paper, we continue the 
research on the preference elicitation of VNM utility 
{\color{black}functions}
from discrete choices among risky lotteries with additive response error. 
We develop a \emph{non-parametric} 
{\color{black}VNM utility model with additive response error}
$U(\cdot)=\mathbb{E}[u(\cdot)]+\varepsilon$, where the corresponding elicitation problem is formulated 
{\color{black}as a maximum likelihood estimation (MLE) problem}.
Following 
{\color{black}
\citet{armbruster2015decision}, 
\citet{hu2015robust} and \citet{guo2024utility}, we 
incorporate structural preference information}
derived from economic theory: the DM’s utility function is 
{\color{black}assumed}
to be monotonically increasing, concave, and Lipschitz continuous. We incorporate this knowledge as 
{\color{black}linear}
constraints in 
{\color{black}the elicitation}
procedure and reformulate the MLE problem 
as a convex program 
{\color{black}driven by}
pairwise comparisons of finitely supported 
lotteries. 
{\color{black}We analyze the existence and uniqueness of the MLE solution.}
Furthermore, we show that the MLE-based VNM utility 
{\color{black}achieves}
a good estimate by establishing theoretical convergence results 
{\color{black}in}
the $\ell_2$  and $\ell_{\infty}$ statistical errors relative to the DM’s true utility values 
{\color{black}at the breakpoints}. 
{\color{black}We also provide a finite-sample error bound on}
the Kolmogorov distance 
{\color{black}between the MLE estimate and}
the true utility function. 
When the elicited MLE-based VNM utility is applied to portfolio optimization, we find that, in a deterministic expected utility maximization problem, it 
{\color{black}is robust to both}
response error and estimation error.

The contributions can be highlighted as follows:
\begin{itemize}
\item 
To the best of our knowledge, this is the first 
{\color{black}work to study}
MLE within a {\color{black}\emph{non-parametric} VNM utility framework with response error}.
We further show that the MLE problem can be reformulated as a convex program based on pairwise 
comparison data 
{\color{black}over finitely supported lotteries}.
\footnote{Compared with the elicitation methods for 
VNM 
utility models 
{\color{black}with additive response error}
that pre-specify a parametric structure of the VNM 
utility function (see, e.g.,~\cite{hey1994investigating,buschena2000generalized, 
blavatskyy2007stochastic,saure2019ellipsoidal}), we develop an elicitation approach for a non-parametric VNM 
utility function {\color{black}in the systematic component}.} 
    
  \item We incorporate 
  \emph{{\color{black}structural} preference 
  information} about the DM’s VNM utility function as 
  linear constraints in the MLE formulation.
  Numerical results demonstrate its effectiveness
  compared 
  {\color{black}with}
  the purely data-driven approach {\color{black}that uses only
  binary choice data}.
  


\item 
{\color{black}We analyze the existence and uniqueness of the MLE solution and establish convergence guarantees for our elicitation approach under suitable conditions on the queries.}
\footnote{Compared with existing statistical results on preference elicitation for additive random utility functions (see, e.g.,~\cite{matzkin1991semiparametric,matzkin2019constructive,briesch2010nonparametric,zhu2023principled,li2023reinforcement}), 
we 
obtain tighter quantitative 
{\color{black}finite-sample}
error bounds for the MLE 
{\color{black}estimates}
and 
{\color{black}derive corresponding}
estimation error bounds for general non-linear utility functions.
} 
{\color{black}We further interpret the statistical results and demonstrate that the resulting finite-sample error bounds provide principled guidance for designing pairwise comparison lotteries.
}

    \item 
    {\color{black}
The MLE-based VNM utility function, when applied to a portfolio optimization problem, exhibits robustness to both response error and estimation error.
}


\end{itemize}

The 
{\color{black}remainder}
of this paper is organized as follows: 
In Section \ref{sec-VNM}, we construct a 
{\color{black}VNM utility model with response error}
and 
{\color{black}derive}
choice probabilities from pairwise comparisons. 
In Section~\ref{sec-MLE}, we perform statistical inference 
{\color{black}via}
MLE and derive a convex reformulation, followed by the 
{\color{black}existence and uniqueness analysis of the MLE solution, as well as the theoretical convergence analysis of the MLE estimates.}
In Section~\ref{sec-decision-making}, the MLE-based VNM utility function is applied to a
portfolio optimization problem. Section~\ref{sec-num-test} presents 
{\color{black}a set of}
numerical tests with a simulated DM to examine the performance of the proposed preference elicitation approach.  
Section~\ref{sec-conclude} concludes.

\section{\color{black}
VNM-type 
utility modeling with response error}\label{sec-VNM}



In the context of preference-based decision-making under uncertainty, 
we propose  
{\color{black}to use a VNM-type utility model with additive random noise to represent the DM's preferences in the presence of response error when making choices.}
In this section, we first 
{\color{black}present the DM's utility modeling with response error,}
incorporating the VNM expected utility theory, and then derive the corresponding probabilistic choice model for pairwise comparison data.

\subsection{\color{black}
The 
VNM
utility model with response error}

We begin by defining a 
set of plausible utility functions that capture the DM's 
{\color{black}underlying true}
preference, 
{\color{black}with no randomness} in the systematic component. 
Let $\mathcal{L}^p([0,\bar{b}])$ denote the set of all {\color{black}real-valued} functions $u: [0,\bar{b}]\to \mathbb{R}$ that are integrable to the $p$-th order, where $\bar{b}\in\mathbb{R}_+$ is 
a pre-specified constant representing 
the maximal outcome that the DM cares about. 
{\color{black}Let} 
$\mathcal{U}_{c}\subseteq\mathcal{L}^1([0,\bar{b}])$ 
{\color{black}denote the set of}
all monotonically increasing, concave, and Lipschitz continuous functions that are normalized to $[0,1]$, i.e., 
\begin{align*}
\mathcal{U}_{c}:= \{ u\in \mathcal{L}^1([0,\bar{b}])\mid \
&u \text{ is monotonically increasing, concave, and Lipschitz continuous }\\
&\text{with modulus bounded by $L$}, 
u(0)=0,\ u(\bar{b})=1\},
\end{align*}
which can be 
further 
{\color{black}written}
as  
\begin{align}\label{eq:U_c}
\nonumber
\mathcal{U}_{c}:= 
\{ u\in\ &  \mathcal{L}^1([0,\bar{b}])\mid \
 u^{'}_{+}(y)\geq0,\ u^{'}_{-}(y)\geq0,\ u^{'}_{+}(y)\leq u^{'}_{-}(y),\ \forall  y\in[0,\bar{b}],\\
&u^{'}_{-}(y_2)\leq u^{'}_{+}(y_1),\ \forall 0\leq y_1<y_2\leq\bar{b},\ u(0)=0,\ u(\bar{b})=1,\ \text{Lip}(u)\leq L\},
\end{align}                          
where $u^{'}_{+}(y)$ and $u^{'}_{-}(y)$ denote the right and left derivatives of $u$ at $y$, respectively; $\text{Lip}(u)\leq L$ means that $u$ is 
uniformly Lipschitz continuous with modulus ${\rm Lip}(u)$ bounded by a pre-specified constant $L>0$.

\begin{remark}\label{remark:prior info}
The constraints in \eqref{eq:U_c} encode 
some 
{\color{black}structural preference}
information about the DM's 
{\color{black}true VNM utility function}.
\begin{enumerate}[label=(\roman*)]
\item 
Under VNM expected utility theory, for any rational 
DM,  
there exists an increasing function $u$ such that, for any two lotteries $X,Y$, the DM prefers $X$ to $Y$ if and only if $\mathbb{E}[u(X)]\geq\mathbb{E}[u(Y)]$. 
The monotonicity of $u$ 
{\color{black}is consistent with the idea that}
most DMs prefer a lottery with a higher sure outcome to one with a lower sure outcome. 

\item The concavity of $u$ 
{\color{black}corresponds to}
the risk-averse attitude of the DM, 
{\color{black}who has diminishing marginal utility in outcomes.}
Under the VNM utility theory, 
the utility functions of risk-averse DMs 
{\color{black}are typically}
increasing and concave. 
A risk-averse DM prefers a certain outcome to a risky one with the same expected value.  
In economics, introducing concave utility functions for rational DMs solves the St. Petersburg paradox and remains the central pillar of modern economic theory~\cite{YUKALOV2021102537}. 
Concavity and monotonicity are also 
{\color{black}imposed}
in {\color{black}semi-parametric elicitation methods}
for 
additive random utility models, where the systematic utility for deterministic outcomes is modeled non-parametrically and the random error is modeled parametrically~\cite{matzkin1991semiparametric}.

\item 
The Lipschitz continuity of $u$ ensures bounded marginal utility, 
{\color{black}in the sense that the DM's utility}
cannot 
{\color{black}change}
too rapidly near any specific outcome~\cite{hu2015robust}.  
This condition is mild and 
is satisfied by most utility functions used in the literature, 
for example, all law-invariant coherent risk measures are globally Lipschitz continuous over random losses~\cite{inoue2003worst}. 
This condition is also related to 
{\color{black}the}
robustness of risk measures under data perturbations in data-driven problems~\cite{guo2021statistical,wang2021quantitative}. 
The Lipschitz modulus $L$ can be 
{\color{black}interpreted and}
estimated as 
the maximum cost the DM is willing to pay for a marginal unit of outcome when possessing nothing.

\item 
Given the concavity and Lipschitz continuity of $u$, its range over $[0,\bar{b}]$ is bounded by $L\bar{b}$. 
{\color{black}Such bounds are often used for normalization,}
which does not change the preference ranking between any two alternatives. 
The normalization condition also prevents the non-uniqueness caused by linear transformations~\cite{haskell2022preference}. 

\end{enumerate}

\end{remark}

Let $\mathcal{L}^p(\Omega, \mathcal{F}, \mathbb{P};[0,\bar{b}])$ denote the space of random 
{\color{black}variables}
$Y: (\Omega, \mathcal{F}, \mathbb{P}) \rightarrow [0,\bar{b}]$ with finite $p$-th moments. 
Let $V:\mathcal{L}^p(\Omega, \mathcal{F}, \mathbb{P};[0,\bar{b}])\to[0,1]$ 
map each random 
{\color{black}variable}
to a deterministic value, {\color{black}which we} refer to as the \textit{systematic component} that represents the DM’s 
{\color{black}underlying}
true utility 
{\color{black}without randomness.}
We assume that 
there exists a utility function $u^{*}\in\mathcal{U}_c$ consistent with 
VNM expected utility theory such that 
$V(Y)=\mathbb{E}[u^{*}(Y)]$ for all $Y\in\mathcal{L}^p(\Omega, \mathcal{F}, \mathbb{P};[0,\bar{b}])$; 
and for any two lotteries $X,Y\in\mathcal{L}^p(\Omega, \mathcal{F}, \mathbb{P};[0,\bar{b}])$, the DM \textit{prefers} $X$ to $Y$ if and only if $\mathbb{E}[u^{*}(X)]\geq\mathbb{E}[u^{*}(Y)]$. 
{\color{black}If}
the DM is VNM-rational, 
there must exist such a VNM utility function consistent with the DM’s preferences over all such pairwise comparisons. 

Let $\varepsilon\in\mathcal{L}^p(\Omega, \mathcal{F}, \mathbb{P};\mathbb{R})$ be a random variable measurable to the $p$-th order, {\color{black}which we} refer to as the \textit{random component} that captures the DM’s response error. 
{\color{black}Suppose}
that $\varepsilon$ is independent of any lottery $Y\in\mathcal{L}^p(\Omega, \mathcal{F}, \mathbb{P};[0,\bar{b}])$. 
{\color{black}We further assume}
that the DM's final random utility representation for 
{\color{black}lottery}
$Y$ is 
{\color{black}given by}
the sum of the systematic utility $\mathbb{E}[u(Y)]$ and the 
{\color{black}response}
error $\varepsilon$, a formulation known as additive random utility {\color{black}model (ARUM) in the econometric literature.} 

We can {\color{black}thus} formally 
{\color{black}write}
the {\color{black}VNM
utility model with response error} as follows: 
\begin{equation}\label{eq:DM-random-model}
    U(Y)=\mathbb{E}[u(Y)]+\varepsilon, \quad \forall Y\in\mathcal{L}^p(\Omega, \mathcal{F}, \mathbb{P};[0,\bar{b}]), 
\end{equation}
where $u\in\mathcal{U}_c$ is the VNM utility function, and the response error $\varepsilon$ is assumed to follow a Gumbel distribution with location parameter 0 and scale parameter $\sigma\in\mathbb{R}_{++}$, as usual in the literature~\cite{mcfadden1974conditional,ben1985discrete,train2009discrete}, 
i.e., $\varepsilon\sim \text{Gumbel}(0,\sigma)$. 
We further impose the condition $\frac{1}{\sigma}\leq \bar{c}$, 
{\color{black}where} 
$\bar{c}\in\mathbb{R}_{++}$ is a pre-specified constant; 
the rationale 
will be explained 
in Remark~\ref{remark:sigma}(iii). 
We summarize 
{\color{black}the above}
assumptions for the full
{\color{black} VNM utility model with response error}
in Assumption \ref{assump:VNM}.


\begin{assumption}\label{assump:VNM}
The DM's choice 
{\color{black}behavior}
can be represented by 
{\color{black}the VNM 
utility model with response error} 
in \eqref{eq:DM-random-model}, where the 
{\color{black}underlying}
true 
VNM utility function in the systematic component is $u^*\in\mathcal{U}_c$, and the true scale parameter of the 
{\color{black}response error}
satisfies $\sigma^*\geq\frac{1}{\bar{c}}$.  
{\color{black}Moreover, the response error $\varepsilon$ is independent of $u^*$ and of any lottery $Y\in\mathcal{L}^p(\Omega, \mathcal{F}, \mathbb{P};[0,\bar{b}])$. }
\end{assumption}


{\color{black}
\begin{remark}
It may be helpful to provide some comments on model \eqref{eq:DM-random-model} and Assumption \ref{assump:VNM}. 
\begin{itemize}

\item[(i)] Traditional discrete choice models specify a linear multivariate systematic utility component for products with observed attributes.
For example, the utility of a product with a deterministic attribute 
vector $y$ is typically modeled as $\tilde{U}(y) = c^{\top}y + \varepsilon$, where 
$c^{\top}y$ is the linear systematic component and $\varepsilon$ captures random 
errors~\cite{saure2019ellipsoidal}. 
By contrast, in \eqref{eq:DM-random-model} we consider a univariate, nonlinear, and non-parametric utility function $u$ under the VNM expected utility framework $\mathbb{E}[u(\cdot)]$, which is a generalization of the linear specifications for risky lottery comparisons.


\item[(ii)] 
The error term $\varepsilon$ captures the deviation between the observed response and 
the DM’s underlying true preference, which may vary across individuals. 
Accordingly, we estimate the scale parameter $\sigma$ of the response error as a free variable, in contrast to~\cite{walker2002generalized,saure2019ellipsoidal} who fix it at $1$. 
Smaller values of $\sigma$ correspond to more consistent choice behavior and greater 
emotional stability, while larger values indicate greater behavioral variability and 
more frequent impulsive choices.~\footnote{
\color{black}The model \eqref{eq:DM-random-model} can potentially be applied to interpret the discrete choices at the population level (multiple DMs) under specific assumptions. 
Following traditional discrete choice models, suppose that the utility of a product with deterministic attributes $y$ 
can be written as $u(y)+\varepsilon$, where 
$u(y)$ captures the homogeneous consensus shared across the
population and $\varepsilon$ represents  individual-level heterogeneity, typically assumed to be independent of $y$. 
We then extend this framework to account for random lotteries.
Under the expected utility 
criterion, the utility of a random lottery $Y$ for a given individual can be written as
$$\mathbb{E}_{\mathbb{P}_Y}\!\left[ u(Y) + \varepsilon \right]= \mathbb{E}[u(Y)] + \varepsilon,$$
where the expectation is taken with respect to $Y$ only, and the equality follows from 
the independence between $Y$ and $\varepsilon$. 
This represents the expected utility for 
a random individual (with heterogeneity $\varepsilon$) when faced with the lottery $Y$.
} 
As $\sigma\to 0$ (i.e., as the response error vanishes), the probability that the DM makes a choice inconsistent with her/his underlying true preference also tends to zero (see further discussion on this point in Remark~\ref{remark:sigma}(iii)). 



\end{itemize}
\end{remark}}




\subsection{Choice probability 
{\color{black}under}
pairwise lottery comparisons}

Suppose that we have a {\color{black}dataset} of $K$ pairwise comparisons preliminarily collected from the DM, denoted by $D=\{(W_k, Y_k, Z_k)\mid k=1,\ldots,K\}$.  
In this dataset, $(W_k, Y_k)$ denotes a pair of risky alternatives (e.g., lotteries) presented in a query to the DM to choose from, and $Z_k$ indicates the corresponding binary choice; we denote $Z_k = 1$ if the DM \textit{chooses} $W_k$, otherwise $Z_k = -1$. 
{\color{black}We consider that}
both $W_k$ and $Y_k$ are discretely distributed random variables in
$\mathcal{L}^p(\Omega, \mathcal{F}, \mathbb{P};[0,\bar{b}])$ 
{\color{black}with finite support.}
The response errors in the DM's binary choices are assumed to be 
{\color{black}independent and identically distributed (i.i.d. for short).}
We now proceed to formulate a probabilistic choice model to 
{\color{black}describe}
the DM's 
{\color{black}observed choices when comparing these lottery pairs in the presence of response error.} 

When choosing between $W_k$ and $Y_k$, the DM assigns 
{\color{black}random utility values}
$U(W_k)=\mathbb{E}\left[u(W_k)\right]+\varepsilon_1$ ($U(Y_k)=\mathbb{E}\left[u(Y_k)\right]+\varepsilon_2$, resp.) to each given lottery $W_k$ ($Y_k$, resp.), 
where $\varepsilon_1$ and $\varepsilon_2$ are 
{\color{black}i.i.d. with}
$\mathrm{Gumbel}(0, \sigma)$. 
To facilitate the analysis, we 
{\color{black}impose the following choice rule.}

\begin{assumption}\label{assump:BTL-utility}
The DM chooses $W_k$ over $Y_k$ if and only if $U(W_k)\geq U(Y_k)$. 
\end{assumption} 

By Assumption \ref{assump:BTL-utility}
{\color{black}and the VNM utility model with response error in \eqref{eq:DM-random-model}}, 
the probability {\color{black}that} the DM chooses $W_k$ over $Y_k$ is: 
\begin{equation}\label{eq:BTL-prob-origin}
\begin{aligned}
{\rm Prob}\left(U(W_k)\geq U(Y_k)\right)
&={\rm Prob} \left(\mathbb{E}\left[u(W_k)\right]+\varepsilon_{1}\geq\mathbb{E}\left[u(Y_k)\right]+\varepsilon_{2}\right)\\
&={\rm Prob} \left(\varepsilon_2\leq\varepsilon_1+\mathbb{E}\left[u(W_k)\right]-\mathbb{E}\left[u(Y_k)\right] \right).   
\end{aligned}
\end{equation}
The cumulative distribution function (CDF) of ${\rm Gumbel}(0, \sigma)$ is
\begin{equation}\label{eq:gumbel-cdf}
    F_{\varepsilon}(x) = e^{-e^{-\frac{x}{\sigma}}}, \ \forall x \in \mathbb{R},
\end{equation}
and its probability density function (PDF) is
\begin{equation*}
    f_{\varepsilon}(x) = \frac{1}{\sigma} e^{-\frac{x}{\sigma}} e^{-e^{-\frac{x}{\sigma}}}, \ \forall x \in \mathbb{R}. 
\end{equation*} 
{\color{black}Conditioning on $\varepsilon_1=x_1$,}
we have
\begin{equation*}
    {\rm Prob} \left(\varepsilon_2\leq\varepsilon_1+\mathbb{E}\left[u(W_k)\right]-\mathbb{E}\left[u(Y_k)\right]\mid \varepsilon_1=x_1 \right)= e^{-e^{-\frac{x_1+\mathbb{E}\left[u(W_k)\right]-\mathbb{E}\left[u(Y_k)\right]}{\sigma}}}.
\end{equation*}
{\color{black}The probability in \eqref{eq:BTL-prob-origin} can be further calculated} as: 
\begin{align}\label{eq:pairwise-choice-prob}
\nonumber
{\rm Prob}\left(U(W_k)\geq U(Y_k)\right)
&
= \int_{-\infty}^{+\infty} {\rm Prob}\left(\varepsilon_2\leq \varepsilon_1+\mathbb{E}\left[u(W_k)\right]-\mathbb{E}\left[u(Y_k)\right]\mid \varepsilon_1=x_1\right) f_{\varepsilon}(x_1) d x_1
\\
\nonumber
&
= \frac{1}{\sigma}\int_{-\infty}^{+\infty} e^{-e^{-\frac{x_1+\mathbb{E}\left[u(W_k)\right]-\mathbb{E}\left[u(Y_k)\right]}{\sigma}}} e^{-\frac{x_1}{\sigma}} e^{-e^{-\frac{x_1}{\sigma}}}dx_1
\\
\nonumber
&
=\frac{1}{\sigma}\int_{-\infty}^{+\infty} \exp \left\{-\left(e^{-\frac{x_1+\mathbb{E}\left[u(W_k)\right]-\mathbb{E}\left[u(Y_k)\right]}{\sigma}}+e^{-\frac{x_1}{\sigma}}\right)\right\}e^{-\frac{x_1}{\sigma}} dx_1
\\
\nonumber
&
=\frac{1}{\sigma}\int_{-\infty}^{+\infty} \exp\left\{-e^{-\frac{x_1}{\sigma}}\left(e^{-\frac{\mathbb{E}\left[u(W_k)\right]-\mathbb{E}\left[u(Y_k)\right]}{\sigma}}+1\right)\right\} e^{-\frac{x_1}{\sigma}} dx_1
\\
\nonumber
&
\xlongequal{t=e^{-\frac{x_1}{\sigma}}}\int_{+\infty}^{0}- \exp\left\{-t\left(e^{-\frac{\mathbb{E}\left[u(W_k)\right]-\mathbb{E}\left[u(Y_k)\right]}{\sigma}}+1\right)\right\} dt
\\
\nonumber
&
=\int_{0}^{+\infty}\exp\left\{-t\left(e^{-\frac{\mathbb{E}\left[u(W_k)\right]-\mathbb{E}\left[u(Y_k)\right]}{\sigma}}+1\right)\right\} dt
\\
\nonumber
&
=\frac{\exp\left\{-t\left(e^{-\frac{\mathbb{E}\left[u(W_k)\right]-\mathbb{E}\left[u(Y_k)\right]}{\sigma}}+1\right)\right\}}{-\left(e^{-\frac{\mathbb{E}\left[u(W_k)\right]-\mathbb{E}\left[u(Y_k)\right]}{\sigma}}+1\right)} \Bigg|_{0}^{+\infty}
\\
&
=\frac{1}{1+e^{-\frac{\mathbb{E}\left[u(W_k)\right]-\mathbb{E}\left[u(Y_k)\right]}{\sigma}}}=\frac{\exp(\frac{\mathbb{E}\left[u(W_k)\right]}{\sigma})}{\exp(\frac{\mathbb{E}\left[u(W_k)\right]}{\sigma})+\exp(\frac{\mathbb{E}\left[u(Y_k)\right]}{\sigma})}, 
\end{align}
which is 
analogous to the 
Bradley-Terry-Luce (BTL) model and the multinomial logit (MNL) model for pairwise comparisons; 
{\color{black}
the key difference is that the DM's utilities enter through the expected utility terms $\mathbb{E}[u(\cdot)]$.}


\begin{remark}\label{remark:sigma}
It may be helpful to make some comments on the choice probability \eqref{eq:pairwise-choice-prob}.

\begin{enumerate}[label=(\roman*)]


\item 
{\color{black}(Linear utility model with response error)}
When the outcomes of lotteries are 
{\color{black}deterministic (i.e., reducing to deterministic products)} 
and the DM has constant marginal utility for these outcomes, the expected utility $\mathbb{E}[u(\cdot)]$ framework reduces to the well-studied linear {\color{black}systematic} utility case. The linear utility function is 
{\color{black}positively homogeneous}
with respect to the scale parameter $\sigma$ {\color{black}in the sense that $\frac{u(y)}{\sigma}=u(\frac{y}{\sigma})$, so $\sigma$ can be normalized (e.g., fixed to 1) in MLE without loss of generality, without affecting estimation of the coefficient vector up to scale~\cite{train2009discrete}}.
 

\item 
{\color{black}(Adjusted VNM utility model with response error)}
{\color{black}Unlike}
the linear case, 
we consider in this paper that the DM has decreasing marginal utility, 
which {\color{black}also motivates} 
us to 
treat the scale parameter $\sigma$ as 
{\color{black}a free variable}
{\color{black}(in addition to the aforementioned incidental personal factors),} and to estimate it jointly with $u$. 
{\color{black}When $0<\sigma<\infty$, we may 
rewrite the model in an adjusted form}
$\widetilde{U}=\mathbb{E}[\tilde{u}(\cdot)]+\tilde{\varepsilon}$, {\color{black}by defining the} adjusted VNM utility function $\tilde{u}:=\frac{u}{\sigma}$ {\color{black}with a fixed left endpoint $\tilde{u}(0)=0$ and a variable right endpoint $\tilde{u}(\bar{b})=\frac{1}{\sigma}$}, and 
{\color{black}defining}
the adjusted response error $\tilde{\varepsilon}:=\frac{\varepsilon}{\sigma}\sim {\rm Gumbel}(0,1)$. {\color{black}This adjustment yields the same choice probability as in \eqref{eq:pairwise-choice-prob} from a computational perspective.~\footnote{\color{black}
We clarify that this adjustment is computationally equivalent for the MLE problem \eqref{eq:MLE-origin} when $0<\sigma<\infty$.
Moreover, $u$ and $\sigma$ are not separately identifiable from the DM's choice $Z_k$, nor can they be separately distinguished in the statistical convergence analysis in Section~\ref{sec:thm-MLE}.
However, $u$ and $\sigma$ are modeled as separate components for important reasons. First, $u$ and $\varepsilon$ are assumed to be independent, and merging them may hinder future statistical testing (beyond the scope of this paper). 
Second, this separation maintains a general modeling framework that accommodates broader data types and preference models, as the adjustment step $\tilde{u}=u/\sigma$ may not be valid for other data types (e.g., ratings), alternative risk measures (e.g., variance), or more general utility functions (e.g., S-shaped utility functions). 
Finally, from an interpretive perspective, allowing a separate variable $\sigma$ for the response error $\varepsilon$ corresponds to a DM-specific level of inconsistency in making choices. We leave these related issues for future research. 
}}

\item 
{\color{black}In \eqref{eq:pairwise-choice-prob}, as $\sigma\to0$, we have ${\rm Prob}\left(U(W_k)\geq U(Y_k)\right)\to0$ 
for any fixed nonzero utility difference $\mathbb{E}[u(W_k)]-\mathbb{E}[u(Y_k)]<0$, and ${\rm Prob}\left(U(W_k)\geq U(Y_k)\right)\to1$ 
for 
$\mathbb{E}[u(W_k)]-\mathbb{E}[u(Y_k)]>0$. 
That is, as the response error vanishes, the probability that the DM makes a choice inconsistent with her/his 
true preference also vanishes.}
If the choice probability ${\rm Prob}\left(U(W_k)\geq U(Y_k)\right)$ 
{\color{black}equals}
0 or 1, {\color{black}then
there exists a deterministic utility model that rationalizes the DM’s observed choices
by comparison,} 
{\color{black}and}
repeated 
{\color{black}responses}
to the same query by the DM 
{\color{black}would be identical.}
We do not consider this degenerate case in this paper and make a standing
assumption that $0<{\rm Prob}\left(U(W_k)\geq U(Y_k)\right)<1$ and $0<{\rm Prob}\left(U(W_k)\leq U(Y_k)\right)<1$.  
Thus, it is reasonable to require $\sigma$ 
to be bounded away from zero, i.e., {\color{black}$\sigma\geq\frac{1}{\bar{c}}$ (equivalently, $\frac{1}{\sigma}\leq\bar{c}$)}, which can also be 
{\color{black}interpreted}
as a uniform upper bound on 
the adjusted utility $\tilde{u}=\frac{u}{\sigma}$. 
{\color{black}We revisit this issue after introducing the maximum likelihood estimation in Remark \ref{remark:gamma=0}(iv).
}

\item {\color{black} 
In \eqref{eq:pairwise-choice-prob}, as $\sigma\rightarrow \infty$, we have ${\rm Prob}\left(U(W_k)\geq U(Y_k)\right)\to\frac{1}{2}$
for any fixed utility difference
$\mathbb{E}[u(W_k)] - \mathbb{E}[u(Y_k)]$.  
In this case, the DM chooses $W_k$ or $Y_k$ with equal probability, independent of her/his underlying true preferences. 
That is, the observed choice behavior is entirely dominated by the DM's response error and becomes essentially random. 
{\color{black}We revisit this issue and provide some conditions under which this case arises in Proposition \ref{prop:gamma=0} and Remark \ref{remark:gamma=0}(ii).
}

}


\end{enumerate}


\end{remark}

\section{Preference elicitation from pairwise comparison data}\label{sec-MLE}

In this section, we 
{\color{black}consider the problem of}
preference elicitation from pairwise comparison data. 
We assume that the DM's choice behavior can be described by  
{\color{black}the VNM utility model with response error}
$U(Y)=\mathbb{E}[u(Y)]+\varepsilon$, as in Assumption \ref{assump:VNM}. 
Our objective is to infer the VNM utility function $u(\cdot)$ and the distribution of the response error 
{\color{black}(parameterized by $\sigma$)}
from the observed pairwise comparison data.

We first formulate the preference elicitation task as a maximum likelihood estimation (MLE) problem 
{\color{black}posed}
over an 
{\color{black}infinite-dimensional}
function space. 
To 
{\color{black}address}
the computational challenges posed by the non-parametric structure of 
$u$, 
we show in Proposition~\ref{prop:MLE-piecewise} that when the 
{\color{black}lotteries in the pairwise comparisons}
are discretely distributed with finite support, 
the maximum of the likelihood can be attained by a piecewise linear utility function. 
Building on this result, we reformulate the original 
{\color{black}infinite-dimensional}
MLE problem as a finite-dimensional convex program, as stated in Theorem~\ref{thm:MLE-reformulation}. 
{\color{black}We then analyze the existence and uniqueness of the optimal solution to this convex program in Propositions \ref{prop:gamma=0} and \ref{prop:unique}. Furthermore, in Proposition \ref{thm:optimal-set-MLE}, we characterize the full optimal solution set in the original function space by extending the piecewise linear optimizer.}
Finally, we establish quantitative {\color{black}error bounds and} convergence {\color{black}rates for} the estimated utility 
{\color{black}relative to}
the DM’s true utility. 
{\color{black}The analysis explicitly accounts for}
both the random response error and the approximation error {\color{black}induced by} the piecewise linear representation. 

\subsection{Statistical inference by MLE}\label{sec:stat-infer}

Suppose we have collected a dataset of pairwise comparisons from the DM, 
{\color{black}denoted as}
$D=\{(W_k, Y_k, Z_k)\mid k=1,\ldots,K\}$, where $Z_k=1$ if the DM chooses $W_k$ and $Z_k=-1$ if the DM chooses $Y_k$.  
Based on the choice probability in \eqref{eq:pairwise-choice-prob}, the probability of observing 
$(W_k, Y_k, Z_k)$ 
given 
$u(\cdot)$ and $\sigma$ is 
\begin{equation*}
{\rm Prob}\left(
Z_k
\mid u,\sigma\right)
= \frac{\exp\left(\frac{\mathbb{E}[u(W_k)]}{\sigma}\right)\mathbb{I}_{(Z_k=1)}}{\exp\left(\frac{\mathbb{E}[u(W_k)]}{\sigma}\right)+\exp\left(\frac{\mathbb{E}[u(Y_k)]}{\sigma}\right)}+\frac{\exp\left(\frac{\mathbb{E}[u(Y_k)]}{\sigma}\right)\mathbb{I}_{(Z_k=-1)}}{\exp\left(\frac{\mathbb{E}[u(W_k)]}{\sigma}\right)+\exp\left(\frac{\mathbb{E}[u(Y_k)]}{\sigma}\right)}, 
\end{equation*}
for $k=1,\ldots,K$, where $\mathbb{I}_{(A)}$ denotes the indicator function, taking the value $1$ if $A$ occurs and $0$ otherwise.  
Assume that the DM's 
{\color{black}choices across}
the $K$ 
{\color{black}lottery pairs}
are independent, then the probability of observing the entire dataset $D$ {\color{black}given $u$ and $\sigma$} is: 
\begin{equation}\label{eq:likelihood-origin}
    {\rm Prob}(D\mid u,\sigma)=\prod\limits_{k=1}^K\left[\frac{\exp\left(\frac{\mathbb{E}[u(W_k)]}{\sigma}\right)\mathbb{I}_{(Z_k=1)}}{\exp\left(\frac{\mathbb{E}[u(W_k)]}{\sigma}\right)+\exp\left(\frac{\mathbb{E}[u(Y_k)]}{\sigma}\right)}+\frac{\exp\left(\frac{\mathbb{E}[u(Y_k)]}{\sigma}\right)\mathbb{I}_{(Z_k=-1)}}{\exp\left(\frac{\mathbb{E}[u(W_k)]}{\sigma}\right)+\exp\left(\frac{\mathbb{E}[u(Y_k)]}{\sigma}\right)}\right]. 
\end{equation}
Taking the logarithm of \eqref{eq:likelihood-origin} yields the log-likelihood function $\ell(u,\sigma)$, 
defined for VNM utility function $u\in\mathcal{U}_c$ and scale parameter $\sigma\geq\frac{1}{\bar{c}}$: 
\begin{align}\label{eq:BTL-log-liklihood}
\ell(u,\sigma)
&
=\sum\limits_{k=1}^K \log \left[\frac{\exp\left(\frac{\mathbb{E}[u(W_k)]}{\sigma}\right)\mathbb{I}_{(Z_k=1)}}{\exp\left(\frac{\mathbb{E}[u(W_k)]}{\sigma}\right)+\exp\left(\frac{\mathbb{E}[u(Y_k)]}{\sigma}\right)}+\frac{\exp\left(\frac{\mathbb{E}[u(Y_k)]}{\sigma}\right)\mathbb{I}_{(Z_k=-1)}}{\exp\left(\frac{\mathbb{E}[u(W_k)]}{\sigma}\right)+\exp\left(\frac{\mathbb{E}[u(Y_k)]}{\sigma}\right)}\right]
\nonumber\\
&=\sum\limits_{k=1}^K \log \left[\frac{\mathbb{I}_{(Z_k=1)}}{1+\exp\left(\frac{\mathbb{E}[u(Y_k)]-\mathbb{E}[u(W_k)]}{\sigma}\right)}+\frac{\mathbb{I}_{(Z_k=-1)}}{1+\exp\left(\frac{\mathbb{E}[u(W_k)]-\mathbb{E}[u(Y_k)]}{\sigma}\right)}\right]\nonumber\\
&{\color{black}
=\sum\limits_{k=1}^K \log \left[\frac{1}{1+\exp\left(\frac{-Z_k(\mathbb{E}[u(W_k)]-\mathbb{E}[u(Y_k)])}{\sigma}\right)}\right]}.
\end{align}

Assume that all the $K$ pairs of lotteries have finite support. 
Let $\mathcal{S}:= \{0\}\cup\bigcup\limits_{k=1}^{K} \operatorname{supp}(W_k)\cup\bigcup\limits_{k=1}^{K} \operatorname{supp}(Y_k)\cup\{\bar{b}\}$, 
and 
{\color{black}let}
$N:=|\mathcal{S}|$ 
{\color{black}denote}
the cardinality of $\mathcal{S}$. 
Let $\mathbb{Y}:= \{{y}_{j}\}_{j=1,\ldots,N}$ be the ordered sequence of all points in $\mathcal{S}$ with fixed ${y}_{1}=0$ and ${y}_{N}=\bar{b}$, 
{\color{black}which we refer to as the support set or the set of breakpoints}. 
Note that 
$\mathcal{S}$ is 
the union of the payoff supports of all 
observed lotteries. 
Hence, our framework naturally allows different lotteries to have 
different 
support points,
and any new payoff values appearing in the dataset are incorporated 
into $\mathcal{S}$ {\color{black} and $\mathbb{Y}$}. 
A richer support set enhances the identification of the entire utility function. 
This is consistent with the convergence results established later in Theorem~\ref{thm:bound-utility} and Corollary~\ref{coll:bound-utility}, where the error bound 
depends on the mesh size $\mu_N:=\max_{j=2,\ldots,N} (y_{j}-y_{j-1})$ of $\mathbb{Y}$.

{\color{black}
For each pair of lotteries $(W_k,Y_k)$, let ${p}^k:=[{p}^k_1,\ldots,{p}^k_N]^{\top}\in\mathbb{R}^N$ denote the probability-mass difference vector, where $p^k_j:=\mathbb{P}[W_k=y_j]-\mathbb{P}[Y_k=y_j]$ for $j=1,\ldots,N$, $k=1,\ldots,K$. 
Denote $p^k_{2:N}:=[p^k_2,\ldots,p^k_N]\in\mathbb{R}^{N-1}$ as the reduced probability-mass difference vector related to $p^k$, $k=1,\ldots,K$. 
Let ${P}$ be the reduced probability-mass difference matrix whose $k$-th row is $p^k_{2:N}$, i.e., ${P}=[({p}^1_{2:N})^{\top};\ldots;({p}^K_{2:N})^{\top}]^{\top}\in\mathbb{R}^{K\times (N-1)}$. 
Then the difference between expected utilities of $W_k$ and $Y_k$ is $\mathbb{E}[u(W_k)]-\mathbb{E}[u(Y_k)]=\sum_{j=1}^N p^k_j u(y_j)$ for $k=1,\ldots,K$. 
}



{\color{black}Consequently,}
for $u\in\mathcal{U}_c$ and $\sigma\geq\frac{1}{\bar{c}}$, we {\color{black}can} recast {\color{black}the log-likelihood function in} \eqref{eq:BTL-log-liklihood} as: 
\begin{equation}\label{eq:log-likelihood-re}
\ell(u,\sigma)=
-\sum_{k=1}^K 
\log \left[1+\exp{\left(\frac{\sum\nolimits_{j=1}^N 
{\color{black}-Z_kp^k_j}
u(y_j)}{\sigma}\right)}\right]. 
\end{equation}
Given dataset $D = \{(W_k, Y_k, Z_k) \mid k = 1, \ldots, K\}$, we 
{\color{black}compute}
the 
MLE 
of $u$ and $\sigma$ by maximizing the log-likelihood function $\ell(u,\sigma)$ in \eqref{eq:log-likelihood-re}, i.e.,  
\begin{equation}\label{eq:MLE-origin}
    \left(\hat{u}_{{\rm MLE}},\ \hat{\sigma}_{\rm MLE}\right) \in \mathop{\arg\max}\limits_{u\in\mathcal{U}_c,\ \sigma\geq \frac{1}{\bar{c}}} \ \ell(u,\sigma). 
\end{equation}
In the next subsection, we 
derive tractable reformulations of the MLE problem \eqref{eq:MLE-origin} over a non-parametric function space.

\subsection{Solution 
{\color{black}to}
the MLE problem \eqref{eq:MLE-origin}}

{\color{black}Since}
the log-likelihood 
$\ell(u,\sigma)$ 
{\color{black}depends}
only on the 
values of $u$ at 
{\color{black}finitely many
points in the 
{\color{black}set of breakpoints}
$\mathbb{Y}$}, 
we show in Proposition \ref{prop:MLE-piecewise} that the maximization over $\mathcal{U}_c$ can be attained by a piecewise linear function with breakpoints exactly equal to the 
set $\mathbb{Y}$.
{\color{black}Building on this,}
Theorem \ref{thm:MLE-reformulation} reformulates the MLE problem over the space of piecewise linear functions as a convex program.

\begin{definition}[Piecewise-linear Lower Approximation (PLA)]\label{def:PLU}

Given the set of breakpoints $\mathbb{Y}$, define the piecewise-linear lower approximation (PLA) operator $\mathcal{T}:\mathcal{L}^1([0,\bar{b}])\to\mathcal{L}^1([0,\bar{b}])$ such that for each $u\in\mathcal{U}_c$, 
\begin{equation*}
    \mathcal{T}u(y)
    =\left\{ 
    \begin{aligned}
    &0 && \text{\rm for}\quad  y=y_1=0, \\
    &\frac{u(y_{j+1})-u(y_j)}{y_{j+1}-y_{j}}(y-y_{j})+u(y_j) && \text{\rm for}\quad  y_{j} \leq y < y_{j+1},\ j=1,\ldots,N-1,\\
    & 1 &&\text{\rm for}\quad  y=y_{N}=\bar{b}. 
    \end{aligned}
    \right.
\end{equation*}
Let ${\mathcal U}_{N} :=\{\mathcal{T}u: u\in \mathcal{U}_c\}$ denote the set of all 
PLA
utility functions of $u\in\mathcal{U}_c$ defined as such.  
\end{definition}

By definition, 
$\mathcal{T}u$ coincides with $u$ 
{\color{black}over the breakpoints in}
$\mathbb{Y}$, i.e., $\mathcal{T}u(y_j) = u(y_j)$ for $j=1,\ldots,N$.  
We may view 
$\mathcal{U}_N$ as an approximation to $\mathcal{U}_c$.
\citet{guo2024utility} and \citet{liu2025preference} 
quantify the PLA errors by virtue of Hoffman's lemma. 
In our setting, there is no PLA error at the finitely many breakpoints in $\mathbb{Y}$, and there exists a PLA utility function that attains the maximum of MLE problem \eqref{eq:MLE-origin}. However, PLA errors 
{\color{black}enter}
the quantitative error bounds between the entire utility function estimated by MLE and the true utility function. We revisit this point in Theorem \ref{thm:bound-utility}. 

Note that $\mathcal{T}u$ preserves monotonicity, concavity, and Lipschitz continuity of $u$, so $\mathcal{T}u \in \mathcal{U}_c$ and $\mathcal{U}_N\subseteq\mathcal{U}_c$. 
Moreover, we have 
\begin{align*}
{u}(y)
&\geq(1-\frac{y-y_{j}}{y_{j+1}-y_{j}}) {u}(y_{j}) + \frac{y-y_{j}}{y_{j+1}-y_{j}} {u}(y_{j+1})\nonumber\\
&=\frac{{u}(y_{j+1})-{u}(y_{j})}{y_{j+1}-y_{j}} (y-y_j)+{u}(y_{j}) = 
\mathcal{T} u(y), 
\ \forall y\in[y_j,y_{j+1}],\ j=1,\ldots,N-1.
\end{align*}
The inequality 
{\color{black}implies}
that $\mathcal{T} u$ provides a uniform lower bound 
{\color{black}for}
$u$ over $[0,\bar{b}]$. 
Furthermore, 
by the log-likelihood function in \eqref{eq:log-likelihood-re}, 
{\color{black}for any}
$\sigma\geq\frac{1}{\bar{c}}$, we have
\begin{equation*}
    \ell(u,\sigma)=\ell(\mathcal{T}u,\sigma),\ \forall u\in\mathcal{U}_c,
\end{equation*}
which motivates the following proposition.



\begin{proposition}\label{prop:MLE-piecewise}
Given the set of breakpoints $\mathbb{Y}$, the ambiguity set of VNM utility functions $\mathcal{U}_c$, and its subset $\mathcal{U}_N$ of piecewise-linear lower approximation utility functions, the optimal values of the following two problems are equal:
\begin{equation}\label{eq-ab}
(A)\quad
   \mathop{\max}\limits_{u\in\mathcal{U}_c,\ \sigma\geq \frac{1}{\bar{c}}}\ell(u,\sigma)
\quad\Longleftrightarrow\quad
(B)\quad
   \mathop{\max}\limits_{u_N\in\mathcal{U}_N,\ \sigma\geq \frac{1}{\bar{c}}}\ell(u_N,\sigma).
\end{equation}
\end{proposition}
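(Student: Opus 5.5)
The plan is a two-sided inequality between the optimal values of (A) and (B), driven by two facts stated just before the proposition: $\mathcal{U}_N\subseteq\mathcal{U}_c$, and $\ell(u,\sigma)=\ell(\mathcal{T}u,\sigma)$ for every $u\in\mathcal{U}_c$ and $\sigma\geq 1/\bar c$.

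First I will confirm these two facts. For the identity of likelihoods, \eqref{eq:log-likelihood-re} depends on $u$ only through the values $u(y_j)$, $j=1,\ldots,N$. Since $\mathcal{T}u(y_j)=u(y_j)$ at every breakpoint, the two likelihoods coincide.

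For the inclusion, take $u\in\mathcal{U}_c$ and check that $\mathcal{T}u$ has the required properties.
\begin{itemize}
\item \emph{Normalization and continuity.} $\mathcal{T}u(0)=u(0)=0$ and $\mathcal{T}u(\bar b)=1=u(\bar b)$, and $\mathcal{T}u$ is the continuous piecewise linear interpolant of $u$ on $\mathbb{Y}$.
\item \emph{Monotonicity and Lipschitz bound.} The slopes are $s_j=\frac{u(y_{j+1})-u(y_j)}{y_{j+1}-y_j}$. Since $u$ is increasing, $s_j\geq 0$, and since ${\rm Lip}(u)\leq L$, $|s_j|\leq L$. A continuous piecewise linear function with slopes bounded by $L$ is $L$-Lipschitz.
\item \emph{Concavity.} Concavity of $u$ gives the chord-slope inequality $s_j\geq s_{j+1}$, and a continuous piecewise linear function with nonincreasing slopes is concave.
\end{itemize}
Hence $\mathcal{T}u\in\mathcal{U}_c$, so $\mathcal{U}_N\subseteq\mathcal{U}_c$.

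For the inequality (B) $\leq$ (A): the feasible set of (B) is contained in that of (A), since $\mathcal{U}_N\subseteq\mathcal{U}_c$ and the constraint on $\sigma$ is the same. Therefore $\sup(B)\leq\sup(A)$.

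For the inequality (A) $\leq$ (B): take any feasible $(u,\sigma)$ of (A). Then $(\mathcal{T}u,\sigma)$ is feasible for (B) by the definition of $\mathcal{U}_N$, and it has the same objective value by the likelihood identity. Hence $\sup(A)\leq\sup(B)$, and the two optimal values are equal.

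The main point needing care is whether the maxima are actually attained, since the statement writes $\max$. I would handle this at the level of suprema, which already gives equality of optimal values whether or not they are attained. If attainment in (A) is assumed, then for a maximizer $(u^\star,\sigma^\star)$ the pair $(\mathcal{T}u^\star,\sigma^\star)$ attains the maximum in (B). Conversely, any maximizer of (B) is feasible for (A) with the same value, so it is a maximizer of (A).

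Attainment itself is not needed for this proposition. Note only that $\ell\leq 0$, so both suprema are finite. Existence of maximizers is treated separately in the paper, where $\sigma$ may escape to infinity (Proposition~\ref{prop:gamma=0}).
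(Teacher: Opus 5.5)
Your proof is correct and is essentially the paper's. The inclusion $\mathcal{U}_N\subseteq\mathcal{U}_c$ gives one inequality, and replacing $u$ by $\mathcal{T}u$ (same breakpoint values, hence same likelihood) gives the other; arguing over arbitrary feasible points and suprema, rather than starting from an optimal $(\hat u,\hat\sigma)$ of (A) as the paper does, is slightly more robust, since (A) need not attain its supremum at any finite $\sigma$ when $\gamma^*=0$.
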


\begin{proof}
Denote the optimal value of $(A)$ as $d_A$ and that of $(B)$ as $d_B$.
On the one hand, $\mathcal{U}_{{N}}\subseteq\mathcal{U}_c$, we have $d_A\geq d_B$. 
On the other hand, 
let $(\hat{u}, \hat{\sigma})$ be an optimal solution of $(A)$. 
Let $\mathcal{T}\hat{u}\in\mathcal{U}_N$ be its PLA, thus $(\mathcal{T}\hat{u},\hat{\sigma})$ is a feasible solution of $(B)$. 
Then we have 
$$
d_A=\ell(\hat{u},\hat{\sigma})=\ell(\mathcal{T}\hat{u},\hat{\sigma})\leq d_B.
$$
Combining the two inequalities gives the conclusion. 
\end{proof}

{\color{black}
Notice that $\mathcal{U}_{N}\subseteq \mathcal{U}_c$ and that 
problems $(A)$ and $(B)$ in \eqref{eq-ab} have
the same optimal value.  
Therefore, the optimal solution set of $(B)$ is contained in that of $(A)$. 
The optimizer of $(A)$ need not be unique, even 
when
the optimizer of $(B)$ is unique. 
Proposition~\ref{thm:optimal-set-MLE} shows that the optimal solution of $(B)$ can be extended to recover the full optimal solution set of $(A)$, by assigning admissible values at non-breakpoint locations. 
Theorem~\ref{thm:MLE-reformulation} then provides a tractable solution method for problem $(B)$. 

}


\begin{theorem}\label{thm:MLE-reformulation}


Let 
{\color{black}$(\bar{\alpha}^*,\bar{\beta}^*,\gamma^*)$}
be the optimal solution to the following convex program 
{\color{black}
\begin{subequations}\label{eq:MLE-re2}
\begin{align}
\label{const:MLE-obj}
\max\limits_{\bar{\alpha},\bar{\beta},\gamma}&\  
-\sum_{k=1}^K 
\log \left[1+\exp{\left(\sum\nolimits_{j=1}^N 
{\color{black}-Z_k p^k_j}
\bar{\alpha}_j\right)}\right]\\
\text{\rm s.t.}\ 
\label{const:MLE-beta-alpha}
& \bar{\beta}_j=(\bar{\alpha}_{j+1}-\bar{\alpha}_j)/(y_{j+1}-y_j),\ j=1,\ldots,N-1,\\
\label{const:MLE-beta}
& \bar{\beta}_{j+1}\leq\bar{\beta}_j,\ j=1,\ldots,N-2,\\
\label{const:MLE-beta-L}
& \bar{\beta}_{1}\leq L\gamma,\ 
\bar{\beta}_{N-1}\geq0,\\
\label{const:MLE-alpha}
& \bar{\alpha}_1=0,\ \bar{\alpha}_N=\gamma,\\
\label{const:MLE-domain}
& \bar{\alpha}\in\mathbb{R}^N,\ \bar{\beta}\in\mathbb{R}^{N-1},\ \gamma\in[0,\bar{c}].
\end{align}
\end{subequations}
If $\gamma^*=0$, then the implied optimal scale parameter $\hat{\sigma}_{\rm MLE}=+\infty$, 
and there exists no VNM utility function $u\in\mathcal{U}_c$ that can 
rationalize 
the DM's binary choice data.
If $\gamma^*>0$,}
define 
$$
\hat{\alpha}^{{\rm MLE}}:=\frac{\bar{\alpha}^*}{\gamma^*},\quad
\hat{\beta}^{{\rm MLE}}:=\frac{\bar{\beta}^*}{\gamma^*},\quad
\text{and}\quad
\hat{\sigma}_{\rm MLE}:=\frac{1}{\gamma^*}.
$$
Then, the piecewise linear function $\hat{u}_{\rm MLE}$ defined as 
\begin{equation}\label{eq:N-piecewise utility function}
    \hat{u}_{\rm MLE}(y)=\left\{ 
    \begin{aligned}
    & 0 &&{\rm for}\quad y={y}_1=0,\\
    &\hat{\beta}^{\text{\rm MLE}}_j(y-{y}_j)+\hat{\alpha}^{\text{\rm MLE}}_{j} &&{\rm for}\quad {y}_j\leq y <{y}_{j+1},\ j=1,\ldots,N-1,\\
    &1 && {\rm for}\quad y= y_{N}=\bar{b},
    \end{aligned}
    \right.
\end{equation}
which we refer to as the MLE-based VNM utility function, 
{\color{black}together with $\hat{\sigma}_{\rm MLE}$, constitutes an optimal solution to} 
\eqref{eq:MLE-origin}; that is, $(\hat{u}_{\rm MLE},\hat{\sigma}_{\rm MLE})\in \mathop{\arg\max}_{u\in\mathcal{U}_c,\ \sigma\geq \frac{1}{\bar{c}}} \ \ell(u,\sigma)$, and the optimal values of problems \eqref{eq:MLE-origin} and 
\eqref{eq:MLE-re2} {\color{black}coincide}. 
\end{theorem}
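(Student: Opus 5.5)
By Proposition~\ref{prop:MLE-piecewise} it suffices to work with problem (B). The argument has three steps: parametrize $\mathcal{U}_N$ by breakpoint values, change variables $\gamma=1/\sigma$ and $\bar\alpha=\gamma\alpha$, and then compare feasible sets and objectives in both directions.

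\textbf{Parametrization of $\mathcal{U}_N$.} I would show that $u_N\in\mathcal{U}_N$ if and only if $u_N$ is the piecewise linear interpolant of values $\alpha_j=u_N(y_j)$, with slopes $\beta_j=(\alpha_{j+1}-\alpha_j)/(y_{j+1}-y_j)$, such that
\[
\alpha_1=0,\qquad \alpha_N=1,\qquad \beta_{j+1}\le\beta_j,\qquad \beta_{N-1}\ge 0,\qquad \beta_1\le L .
\]
\emph{Necessity.} Concavity of $u\in\mathcal{U}_c$ gives nonincreasing chord slopes. Monotonicity gives $\beta_{N-1}\ge0$. The Lipschitz bound gives $\beta_1\le L$.

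\emph{Sufficiency.} The interpolant of such values is itself in $\mathcal{U}_c$ and is fixed by $\mathcal{T}$. Concavity holds because the slopes are nonincreasing. All slopes are nonnegative since $\beta_j\ge\beta_{N-1}\ge0$, so the function is increasing. Its Lipschitz modulus is $\max_j|\beta_j|=\beta_1\le L$. The endpoint values are $0$ and $1$. Hence it lies in $\mathcal{U}_N$.

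\textbf{Change of variables.} Set $\gamma=1/\sigma\in(0,\bar c]$, $\bar\alpha=\gamma\alpha$ and $\bar\beta=\gamma\beta$. The objective \eqref{eq:log-likelihood-re} becomes exactly \eqref{const:MLE-obj}. Constraints \eqref{const:MLE-beta-alpha}--\eqref{const:MLE-alpha} are the scaled parametrization constraints, since all of them are positively homogeneous in $(\alpha,\beta,1)\mapsto(\bar\alpha,\bar\beta,\gamma)$. Thus the feasible set of (B) maps bijectively onto the feasible set of \eqref{eq:MLE-re2} restricted to $\gamma\in(0,\bar c]$, with equal objective values.

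\textbf{Convexity and the endpoint $\gamma=0$.} The objective is concave: it is minus a sum of log-sum-exp (softplus) functions of linear forms in $\bar\alpha$. All constraints are linear. So \eqref{eq:MLE-re2} is a convex program. Since \eqref{eq:MLE-re2} adds only the point $\gamma=0$, its optimal value is at least that of (B).

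Consider $\gamma=0$. Then $\bar\beta_1\le0\le\bar\beta_{N-1}\le\bar\beta_1$, so all $\bar\beta_j=0$ and $\bar\alpha=0$. The objective value there is $-K\log2$. This equals $\lim_{\sigma\to\infty}\ell(u,\sigma)$ for any fixed $u\in\mathcal{U}_N$, so it is a supremum of (B)-values. Hence the two optimal values (as suprema) coincide.

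If $\gamma^*>0$, then $(\hat\alpha^{\rm MLE},\hat\beta^{\rm MLE},1/\gamma^*)$ is feasible for (B) via the bijection. The interpolant $\hat u_{\rm MLE}$ in \eqref{eq:N-piecewise utility function} lies in $\mathcal{U}_N\subseteq\mathcal{U}_c$ and attains the optimal value of (B). By Proposition~\ref{prop:MLE-piecewise} it is therefore optimal for \eqref{eq:MLE-origin}.

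If $\gamma^*=0$, the supremum is approached only as $\sigma\to\infty$ and is not attained at any finite $\sigma$. I will read the statement's phrase ``no VNM utility function rationalizes the data'' as this fact: no $u\in\mathcal{U}_c$ with finite $\sigma$ yields a likelihood above the pure coin-flip value.

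\textbf{Main obstacle.} I expect the main difficulty to be the $\gamma=0$ case. It needs care about attainment, because maximizers of (B) need not exist when the supremum sits at the boundary $\sigma=\infty$. It also requires making the interpretive clause precise. Existence of the maximizer of \eqref{eq:MLE-re2} itself is not a problem: the feasible set is compact, since $0\le\bar\alpha_j\le\gamma\le\bar c$, and the objective is continuous.
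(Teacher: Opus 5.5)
Your proposal is correct and follows the paper's route. You reduce to problem (B) via Proposition~\ref{prop:MLE-piecewise}, parametrize $\mathcal{U}_N$ by breakpoint values and slopes, and substitute $\gamma=1/\sigma$, $\bar\alpha=\alpha/\sigma$, $\bar\beta=\beta/\sigma$, with $\gamma=0$ standing for $\sigma=+\infty$. The paper simply adopts that convention, whereas you justify it: $\gamma=0$ forces $\bar\alpha=\mathbf{0}$, whose value $-K\log 2$ equals $\lim_{\sigma\to\infty}\ell(u,\sigma)$, so the optimal values agree as suprema.

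One minor overstatement: when $\gamma^*=0$, the value $-K\log 2$ can still be attained at a finite $\sigma$ in degenerate data. This happens whenever some nonzero feasible $\bar\alpha$ has $(p^k)^\top\bar\alpha=0$ for all $k$, e.g.\ if $W_k$ and $Y_k$ are identically distributed for every $k$. Your weaker reading is nevertheless correct: no $u\in\mathcal{U}_c$ with finite $\sigma$ gives a likelihood strictly above the coin-flip value. This follows from $\log(1+e^{-t})\ge\log 2-t/2$ together with $\bar p^\top\bar\alpha\le 0$ on the feasible set.
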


\begin{proof}
By Proposition \ref{prop:MLE-piecewise},  
we can solve the optimization problem $\max\limits_{u_N\in\mathcal{U}_N,\ \sigma\geq\frac{1}{\bar{c}}}\ell(u_N,\sigma)$ instead of \eqref{eq:MLE-origin}. 
For any $u_N\in\mathcal{U}_N$, let $\alpha:=\left[\alpha_1,\ldots,\alpha_N\right]\in\mathbb{R}^N$ denote the utility values at the breakpoints in ${\mathbb{Y}}$, i.e., $\alpha_j:=u_N({y}_j)=u(y_j),\ j=1,\ldots, N$,  and let $\beta:=\left[\beta_1,\ldots,\beta_{N-1}\right]\in\mathbb{R}^{N-1}_+$ denote the slopes of linear pieces between two adjacent breakpoints, i.e., $\beta_j:=(\alpha_{j+1}-\alpha_j)/({y}_{j+1}-{y}_j),\ j=1,\ldots,N-1$. 
Then any function $u_N$ in $\mathcal{U}_{N}$ can be written as 
\begin{equation*}
    u_N(y)=\left\{ 
    \begin{aligned}
    & 0 &&{\rm for}\quad y={y}_1=0, \\
    &\beta_j(y-{y}_j)+\alpha_j &&{\rm for}\quad {y}_j\leq y <{y}_{j+1},\ j=1,\ldots,N-1,\\
    &1 &&{\rm for}\quad y= {y}_N=\bar{b}. 
    \end{aligned}
    \right.
\end{equation*}

Since $\ell(u_N,\sigma)$ only relies on the utility values  $\left[\alpha_1,\ldots,\alpha_N\right]$ at the breakpoints in $\mathbb{Y}$, we can reformulate the problem of maximizing $\ell(u_N,\sigma)$ over $\mathcal{U}_N$ {\color{black}(i.e., problem $(B)$ in \eqref{eq-ab})} as the following finite-dimensional optimization problem: 
\begin{subequations}\label{eq:MLE-re1}
\begin{align}
\label{obj:MLE-re1}
\max\limits_{\alpha,\beta,\sigma}&\  
-\sum_{k=1}^K 
\log \left[1+\exp{\left(\frac{\sum\nolimits_{j=1}^N 
{\color{black}-Z_k p^k_j}
\alpha_j}{\sigma}\right)}\right]
\\
\text{s.t.}\ 
\label{eq:MLE-re1-constraints-alpha-beta}
& \beta_j=(\alpha_{j+1}-\alpha_j)/(y_{j+1}-y_j),\ j=1,\ldots,N-1,\\
\label{eq:MLE-re1-constraints-concave}
& \beta_{j+1}\leq\beta_j,\ j=1,\ldots,N-2,\\
\label{eq:MLE-re1-constraints-mono}
&\beta_1\leq L,\ \beta_{N-1}\geq 0, \\ 
\label{eq:MLE-re1-constraints-normalization}
& \alpha_1=0,\ \alpha_N=1,\ \sigma\geq \frac{1}{\bar{c}},\\
& \alpha\in\mathbb{R}^N,\ \beta\in\mathbb{R}^{N-1},
\end{align}
\end{subequations}
where 
the constraint \eqref{eq:MLE-re1-constraints-alpha-beta} 
characterizes the relation between utility values $\alpha$ and slopes $\beta$; 
\eqref{eq:MLE-re1-constraints-concave} guarantees the concavity of $u_N$; 
\eqref{eq:MLE-re1-constraints-mono} together with \eqref{eq:MLE-re1-constraints-concave} 
ensures the monotonicity and Lipschitz continuity of $u_N$;
\eqref{eq:MLE-re1-constraints-normalization} imposes the normalization constraint on $u_N$ and the lower bound on $\sigma$.


As $\sigma\geq \frac{1}{\bar{c}}>0$, we make the following variable transformations:  
$\bar{\alpha}_j=\frac{\alpha_j}{\sigma}$, $\bar{\beta}_j=\frac{\beta_j}{\sigma}$ and $\gamma=\frac{1}{\sigma}$. 
{\color{black}We also allow $\sigma=+\infty$ as a feasible value; under this convention, $\gamma=0$.  Hence $\gamma\in[0,\bar{c}]$.}
{\color{black}Substituting these transformations into \eqref{eq:MLE-re1} yields the equivalent reformulation:}
\begin{subequations}
\begin{align}
\max\limits_{\bar{\alpha},\bar{\beta},\gamma}&\  
-\sum_{k=1}^K 
\log \left[1+\exp{\left(\sum\nolimits_{j=1}^N 
{\color{black}-Z_k p^k_j}
\bar{\alpha}_j\right)}\right]\\
\text{s.t.}\ 
& \bar{\beta}_j=(\bar{\alpha}_{j+1}-\bar{\alpha}_j)/(y_{j+1}-y_j),\ j=1,\ldots,N-1,\\
& \bar{\beta}_{j+1}\leq\bar{\beta}_j,\ j=1,\ldots,N-2,\\
& \bar{\beta}_{1}\leq L\gamma,\ 
\bar{\beta}_{N-1}\geq0,\\
& \bar{\alpha}_1=0,\ \bar{\alpha}_N=\gamma,\\
& \bar{\alpha}\in\mathbb{R}^N,\ \bar{\beta}\in\mathbb{R}^{N-1},\ 
{\color{black}\gamma\in[0,\bar{c}]}.
\end{align}
\end{subequations}
\end{proof}

{\color{black}Although problem \eqref{eq:MLE-re2} in Theorem \ref{thm:MLE-reformulation} is already a convex program, for computational convenience,} we introduce auxiliary variables $c:=[c_1,c_2,\ldots,c_K]^{\top}$, 
{\color{black}which yield an equivalent and solver-friendly conic reformulation of \eqref{eq:MLE-re2}:} 
\begin{subequations}\label{eq:MLE-re-final}
\begin{align}
\label{eq:MLE-obj}
\max\limits_{c,\bar{\alpha},\bar{\beta},\gamma}\ & \sum_{k=1}^K c_k\\
\text{\rm s.t.}\
\label{const:MLE-BTL}
& 
e^{c_k}+e^{c_k+\sum_{j=1}^N {\color{black}-Z_k p^k_j}
\bar{\alpha}_j}\leq1,
\ k=1,\ldots,K,\\
\label{const:MLE-beta-alpha-}
& \bar{\beta}_j=(\bar{\alpha}_{j+1}-\bar{\alpha}_j)/(y_{j+1}-y_j),\ j=1,\ldots,N-1,\\
\label{const:MLE-concave}
& \bar{\beta}_{j+1}\leq\bar{\beta}_j,\ j=1,\ldots,N-2,\\
\label{const:MLE-Lip}
& \bar{\beta}_1\leq L\gamma,\\
\label{const:MLE-mono}
& \bar{\beta}_{N-1}\geq0,\\
\label{const:MLE-norm}
& \bar{\alpha}_1=0,\ \bar{\alpha}_N=\gamma,\\
\label{const:MLE-domain-}
& \bar{\alpha}\in\mathbb{R}^N,\ \bar{\beta}\in\mathbb{R}^{N-1},\ c\in\mathbb{R}^K,\ 
{\color{black}\gamma\in[0,\bar{c}]}.  
\end{align}
\end{subequations}
Problem \eqref{eq:MLE-re-final} is a convex optimization problem with exponential conic constraints and can therefore be efficiently solved using state-of-the-art solvers such as MOSEK and ECOS.

\begin{remark}\label{remark:prior-MLE}
In practical applications, we may have different 
{\color{black}structural preference}
information about the DM's 
{\color{black}true VNM utility function}, 
{\color{black}in addition to}
the observed pairwise comparison data. 
In this paper,  
{\color{black}we incorporate 
{\color{black}structural}
preference information by restricting the DM's utility function to}
the ambiguity set $\mathcal{U}_c$ 
in \eqref{eq:U_c}, {\color{black}which encodes} monotonicity, concavity, and Lipschitz continuity. 
{\color{black}We also assume that}
the response errors 
{\color{black}are}
i.i.d. and follow a ${\rm Gumbel}(0,\sigma)$ distribution. 
{\color{black}Accordingly,}
we derive the reformulation \eqref{eq:MLE-re-final} of the MLE problem. 
We remark that without 
{\color{black}part (or all) of}
the 
{\color{black}structural}
preference information encoded in $\mathcal{U}_c$, 
{\color{black}we}
can still derive tractable reformulations of the corresponding MLE problems.

\begin{itemize}
\item  For a rational and {\color{black}risk-averse} DM, let $\mathcal{U}_{1}$ be an ambiguity set that incorporates 
{\color{black}structural preference}
information of monotonicity and concavity, (compared with $\mathcal{U}_c$ in \eqref{eq:U_c}, $\mathcal{U}_{1}$ excludes the Lipschitz continuity constraint), i.e., 
\begin{align*}
\mathcal{U}_{1}:= \{ u\in \mathcal{L}^1([0,\bar{b}])\mid \
& u^{'}_{+}(y)\geq0,\ u^{'}_{-}(y)\geq0,\ u^{'}_{+}(y)\leq u^{'}_{-}(y),\ \forall y\in[0,\bar{b}],\\
&u^{'}_{-}(y_2)\leq u^{'}_{+}(y_1),\ \forall 0\leq y_1<y_2\leq\bar{b},\ u(0)=0,\ u(\bar{b})=1\}, 
\end{align*}
If $\mathcal{U}_c$ in \eqref{eq:MLE-origin} is replaced by $\mathcal{U}_1$, the corresponding reformulation
{\color{black}becomes maximizing}
\eqref{eq:MLE-obj} subject to constraints \eqref{const:MLE-BTL}--\eqref{const:MLE-concave} and \eqref{const:MLE-mono}--\eqref{const:MLE-domain-}. 
In this case, if the lotteries in the pairwise comparison queries follow Dirac 
distributions (i.e., deterministic constants), the corresponding MLE problem 
{\color{black}has been}
studied in \cite{matzkin1991semiparametric}.


\item 
For a rational DM, 
let $\mathcal{U}_{2}$ be an ambiguity set that incorporates 
{\color{black}structural preference}
information of monotonicity only, (compared with $\mathcal{U}_c$ in \eqref{eq:U_c}, $\mathcal{U}_{2}$ excludes the Lipschitz continuity and concavity {\color{black}constraints}), i.e., 
\begin{align*}
\mathcal{U}_{2}:= \{ u\in \mathcal{L}^1([0,\bar{b}])\mid 
u^{'}_{+}(y)\geq0,\ u^{'}_{-}(y)\geq0,\ \forall y\in[0,\bar{b}],\ 
u(0)=0,\ u(\bar{b})=1\}. 
\end{align*}
If $\mathcal{U}_c$ in \eqref{eq:MLE-origin} is replaced by $\mathcal{U}_2$, the corresponding reformulation 
{\color{black}becomes maximizing}
\eqref{eq:MLE-obj} subject to {\color{black}constraints \eqref{const:MLE-BTL}--\eqref{const:MLE-beta-alpha-}, \eqref{const:MLE-norm}--\eqref{const:MLE-domain-} and $\bar\beta_i\geq 0$, $i=1,\ldots,N-1$.}

\item For a DM without any 
{\color{black}structural}
preference information, let {\color{black}$\mathcal{U}_{3}$ be defined as}
\begin{align*}
\mathcal{U}_{3}:= \{ u\in \mathcal{L}^1([0,\bar{b}])\mid 
u(0)=0,\ u(\bar{b})=1\}. 
\end{align*}
If $\mathcal{U}_c$ in \eqref{eq:MLE-origin} is replaced by $\mathcal{U}_3$, the corresponding reformulation 
{\color{black}becomes maximizing}
\eqref{eq:MLE-obj} subject to constraints \eqref{const:MLE-BTL}--\eqref{const:MLE-beta-alpha-} and \eqref{const:MLE-norm}--\eqref{const:MLE-domain-}. 

\item If we consider the ambiguity set $\mathcal{U}_{3}$ and fix the scale parameter of the response error 
{\color{black}to}
a constant $\hat{\sigma}$,
the MLE problem reduces to an unconstrained convex program that maximizes {\color{black}the objective} \eqref{eq:MLE-obj}, 
with $\bar{\alpha}_1 = 0$ and $\bar{\alpha}_N = \frac{1}{\hat{\sigma}}$ {\color{black}fixed}. 
This corresponds to the 
formulation 
{\color{black}widely}
studied in the choice-behavior literature~\cite{fan2025ranking,walker2002generalized,fosgerau2020discrete}. 
\end{itemize}
The 
{\color{black}structural}
preference information 
{\color{black}reflects 
restrictions
on the shape
of the DM's}
true {\color{black}VNM} utility function, 
{\color{black}and}
is crucial for simplifying both the modeling and the associated elicitation procedure. 
The numerical results in Section \ref{sec:num-prior-info} show that incorporating more 
{\color{black}structural preference}
information about the true {\color{black}VNM} utility function improves elicitation efficiency.

\end{remark}



\subsection{\color{black}Existence and uniqueness of the MLE solution}

{\color{black}
In Theorem \ref{thm:MLE-reformulation}, we allow the variable $\gamma$ to take the value  0 to ensure the well-definedness of the optimization problem \eqref{eq:MLE-re2}. 
However, $\gamma^*=0$ cannot rationalize the DM’s binary choice data. 
Thus, in this subsection, 
we establish a necessary and sufficient condition for the existence (or nonexistence) of a VNM utility function that can rationalize the DM’s binary choice data (equivalently, for determining whether $\gamma^*=0$ or $\gamma^*>0$) in Proposition~\ref{prop:gamma=0}.
In Proposition~\ref{prop:unique}, we further analyze the uniqueness of the optimal solution $(\hat{u}_{\rm MLE}, \hat{\sigma}_{\rm MLE})$ of problem~$(B)$ in~\eqref{eq-ab}, which reduces to analyzing the uniqueness of the optimal 
$\bar\alpha$ in problem~\eqref{eq:MLE-re2}.
}

{\color{black}
\begin{proposition}\label{prop:gamma=0}
Let $\mathcal{A}$ be the set of all feasible utility values $\left\{\alpha_j=u(y_j)\right\}_{j=1}^N$ at the breakpoints in 
$\{y_j\in\mathbb{Y}\}_{j=1}^N$ 
for $u\in\mathcal{U}_c$, namely, 
\begin{equation}\label{eq:alpha-set}
\mathcal{A}:=\left\{\alpha\in\mathbb{R}^N:
 \alpha_1=0, \alpha_N=1, \alpha_1\le \cdots \le \alpha_N, 
0\le \tfrac{\alpha_{j+1}-\alpha_j}{y_{j+1}-y_j}\le \tfrac{\alpha_j-\alpha_{j-1}}{y_j-y_{j-1}}\le L,
j=2,\ldots,N-1
\right\}, 
\end{equation}
which is induced by the structural constraints 
in $\mathcal{U}_c$
(i.e., monotonicity, concavity, Lipschitz continuity, and normalization). 
Define $\bar p := 
\sum_{k=1}^K Z_k\,p^k\in\mathbb{R}^N$. 
Then $\gamma^*=0$ is optimal for problem \eqref{eq:MLE-re2}
(equivalently, there exists no VNM utility function in $\mathcal{U}_c$ that can 
rationalize the DM’s observed choices
in the aggregate sense)
if and only if
\begin{equation}\label{eq:gamma0-iff-cone}
\max_{\alpha\in\mathcal{A}}\ \bar p^\top \alpha \ \le\ 0.
\end{equation}
Equivalently, the optimal $\gamma^*>0$ holds if and only if
\begin{equation}\label{eq:gamma-positive}
\max_{\alpha\in\mathcal{A}} \ \bar p^\top \alpha \ >\ 0.
\end{equation}
In particular, a sufficient (stronger but easy-to-check) condition for $\gamma^*=0$ is  
\begin{equation}\label{eq:gamma0-suff-cone}
\bar p_j \le 0,\quad j=1,\ldots,N,
\end{equation}
since every $\alpha\in\mathcal{A}$ is componentwise nonnegative. 

\end{proposition}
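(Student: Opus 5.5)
The plan is to use the change of variables from Theorem~\ref{thm:MLE-reformulation}: every feasible point of \eqref{eq:MLE-re2} has the form $\bar\alpha=\gamma\alpha$ with $\alpha\in\mathcal{A}$ and $\gamma\in[0,\bar c]$. Indeed, for $\gamma>0$ the constraints \eqref{const:MLE-beta-alpha}--\eqref{const:MLE-alpha} divided by $\gamma$ are exactly the defining inequalities of $\mathcal{A}$. For $\gamma=0$, the endpoint constraints force $\bar\alpha=0$: concavity with $\bar\alpha_1=\bar\alpha_N=0$ and $\bar\beta_{N-1}\ge0$, $\bar\beta_1\le 0$ give all slopes zero. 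So problem \eqref{eq:MLE-re2} becomes
\[
\max_{\gamma\in[0,\bar c],\ \alpha\in\mathcal{A}}\ g(\gamma,\alpha):=-\sum_{k=1}^K\log\bigl(1+e^{-\gamma Z_k (p^k)^\top\alpha}\bigr),
\]
and the value at $\gamma=0$ is $-K\log 2$, independent of $\alpha$.

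For fixed $\alpha$, define $\phi_\alpha(\gamma):=g(\gamma,\alpha)$. This is a concave function of the scalar $\gamma$, since each summand is $-\log(1+e^{-\gamma a_k})$ with $a_k:=Z_k(p^k)^\top\alpha$, and $t\mapsto -\log(1+e^{-t})$ is concave. Its right derivative at $0$ is $\phi_\alpha'(0)=\sum_k a_k\cdot\tfrac12=\tfrac12\bar p^\top\alpha$. Concavity then gives the key equivalence: $\gamma=0$ maximizes $\phi_\alpha$ over $[0,\bar c]$ if and only if $\phi_\alpha'(0^+)\le0$.

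\emph{Sufficiency of \eqref{eq:gamma0-iff-cone}.} If $\max_{\alpha\in\mathcal{A}}\bar p^\top\alpha\le0$, then for every $\alpha\in\mathcal{A}$ we have $\phi_\alpha(\gamma)\le\phi_\alpha(0)+\gamma\phi'_\alpha(0)\le -K\log2$. Hence $\gamma^*=0$, with value $-K\log 2$, is optimal.

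\emph{Necessity.} If some $\alpha^\circ\in\mathcal{A}$ has $\bar p^\top\alpha^\circ>0$, then $\phi_{\alpha^\circ}'(0)>0$. Hence for small $\gamma>0$ we get $\phi_{\alpha^\circ}(\gamma)>-K\log2$, so every point with $\gamma=0$ is strictly suboptimal and $\gamma^*>0$. This also yields \eqref{eq:gamma-positive}.

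\emph{Existence.} The maximum over $\mathcal{A}$ is attained because $\mathcal{A}$ is a nonempty compact polytope; the linear function $u(y)=y/\bar b$ lies in it provided $L\ge 1/\bar b$, which is implicit in $\mathcal{U}_c\neq\emptyset$.

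\emph{Sufficient condition \eqref{eq:gamma0-suff-cone}.} Monotonicity together with $\alpha_1=0$ makes every $\alpha\in\mathcal{A}$ componentwise nonnegative. So $\bar p\le0$ gives $\bar p^\top\alpha\le0$ on $\mathcal{A}$, and \eqref{eq:gamma0-iff-cone} follows.

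The main delicate step is the reduction at $\gamma=0$. The feasible set in $(\bar\alpha,\gamma)$ is a cone over $\mathcal{A}$ whose apex $\gamma=0$ collapses to $\bar\alpha=0$, and this must be verified carefully before the one-dimensional directional-derivative argument can be applied. The interpretive clause (no rationalizing $u$ in the aggregate sense) is then just a restatement: $\bar p^\top\alpha=\sum_k Z_k(\mathbb{E}[u(W_k)]-\mathbb{E}[u(Y_k)])$ is the aggregate agreement of $u$ with the observed choices, and the condition says that no $u\in\mathcal{U}_c$ makes this aggregate agreement positive.
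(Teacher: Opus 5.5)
Your proof is correct and follows essentially the paper's route. Both arguments identify the feasible set of \eqref{eq:MLE-re2} with the truncated cone $\{\gamma\alpha:\gamma\in[0,\bar c],\ \alpha\in\mathcal{A}\}$, whose apex $\gamma=0$ collapses to $\bar\alpha=\mathbf{0}$, and both reduce optimality of $\bar\alpha=\mathbf{0}$ to the sign of $\nabla f(\mathbf{0})^\top\alpha=\tfrac12\bar p^\top\alpha$ over $\mathcal{A}$. The differences are cosmetic: you check this ray by ray via one-dimensional concavity in $\gamma$ rather than invoking the first-order variational inequality over a convex set, and you additionally note that the maximum over $\mathcal{A}$ is attained and that every optimum has $\gamma^*>0$ when that maximum is positive.
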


\begin{proof}
Let 
\[
f(\bar\alpha):=-\sum_{k=1}^K 
\log \left[1+\exp{\left(
-Z_k  (p^k)^\top \bar{\alpha}
\right)}\right]
\]
denote the objective in \eqref{const:MLE-obj}, written as a function of $\bar\alpha$.  
Let $\bar{\mathcal{A}}$ be the feasible set of $\bar\alpha$ induced by constraints \eqref{const:MLE-beta-alpha}--\eqref{const:MLE-domain}.

We first note that, over the feasible set $\bar{\mathcal{A}}$, $\gamma=0$ is equivalent to $\bar\alpha=\mathbf0$, where $\mathbf0$ denotes the zero vector. 
Indeed, if $\gamma=0$, then $\bar\alpha_N=\gamma=0$. Together with $\bar\alpha_1=0$ and the monotonicity 
$\bar\alpha_1\le \cdots \le \bar\alpha_N$ implied by \eqref{const:MLE-beta-alpha}--\eqref{const:MLE-beta}, this implies $\bar\alpha=\mathbf0$. Conversely, $\bar\alpha=\mathbf0$ implies $\gamma=\bar\alpha_N=0$. 
Thus, it suffices to show that $\bar{\alpha}^*=\mathbf{0}$ is optimal for \eqref{eq:MLE-re2} if and only if condition \eqref{eq:gamma0-iff-cone} holds. 

Since $\gamma\in[0,\bar c]$ by \eqref{const:MLE-domain}, we define the truncated 
conic hull generated by $\mathcal{A}$ as
$$\mathcal{C}:=\{\gamma\alpha:\ \gamma\in[0,\bar c],\ \alpha\in\mathcal{A}\}.$$ On the one hand,  
for any $\alpha\in\mathcal{A}$ and any $\gamma\in[0,\bar c]$, define $\bar\alpha:=\gamma\alpha$, and let $\bar\beta$ be given by
\eqref{const:MLE-beta-alpha}. Then $(\bar\alpha,\bar\beta,\gamma)$ satisfies \eqref{const:MLE-beta-alpha}--\eqref{const:MLE-domain},
so $\bar\alpha\in\bar{\mathcal{A}}$, which indicates $
\mathcal{C}\subseteq \bar{\mathcal{A}}$. 
On the other hand, 
take any $\bar\alpha\in\bar{\mathcal{A}}$ and let $\gamma=\bar\alpha_N\in[0,\bar c]$. If $\gamma=0$, then $\bar\alpha=\mathbf0$
as shown above, and thus $\bar\alpha=0\cdot\alpha$ for any $\alpha\in\mathcal{A}$. If $\gamma>0$, define $\alpha:=\bar\alpha/\gamma$.
Then $\alpha_1=0$ and $\alpha_N=1$ by \eqref{const:MLE-alpha}. Dividing the slope constraints in
\eqref{const:MLE-beta-alpha}--\eqref{const:MLE-beta} by $\gamma$ implies that $\alpha$ satisfies the monotonicity, concavity,
and $L$-Lipschitz constraints. Hence $\alpha\in\mathcal{A}$. Therefore $\bar\alpha=\gamma\alpha$ with $\gamma\in(0,\bar c]$ and
$\alpha\in\mathcal{A}$, yielding $\bar{\alpha}\in\mathcal{C}$ and 
$\bar{\mathcal{A}}\subseteq \mathcal{C}
$. 
Consequently,
$\bar{\mathcal{A}}=\mathcal{C}$.

The function $f(\cdot)$ is concave in $\bar{\alpha}$ as it is the negative of a sum of convex $\log(1+\exp(\cdot))$ terms. 
Since $f$ is differentiable and concave, and $\bar{\mathcal{A}}$ is convex, $\bar\alpha^*=\mathbf0$ is optimal for maximizing $f$ over $\bar{\mathcal{A}}$
if and only if the first-order optimality condition holds:
\begin{equation*}
\nabla f(\mathbf0)^\top (\bar\alpha-\mathbf0)\le 0,\quad \forall \bar\alpha\in\bar{\mathcal{A}}.
\end{equation*}
A direct calculation gives
\[
\nabla f(\mathbf0)=-\sum_{k=1}^K \frac{e^0}{1+e^0} (-Z_k\,p^k)=\frac12\sum_{k=1}^K Z_k\,p^k
=\frac{1}{2} \bar p. 
\]
Therefore, the first-order condition is equivalent to $\bar p^\top\bar\alpha\le 0$ for all $\bar\alpha\in\bar{\mathcal{A}}$, i.e.,
$\bar p^\top(\gamma\alpha)\le 0$ for all $\gamma\in[0,\bar c]$ and all $\alpha\in\mathcal{A}$. 
Because this inequality must hold for all $\gamma\in[0,\bar c]$,
the optimal $\bar\alpha^*=\mathbf0$ holds if and only if
\[
\bar p^\top \alpha\le 0,\quad \forall \alpha\in\mathcal{A},
\]
which is equivalent to \eqref{eq:gamma0-iff-cone}.

The equivalence in \eqref{eq:gamma-positive} follows immediately from
\eqref{eq:gamma0-iff-cone} since the two conditions are complementary.

Finally, if $\bar p_j \le 0$ for all $j=1,\ldots,N$, then
$\bar p^\top \alpha \le 0$ for all $\alpha\in\mathcal{A}$ since every $\alpha\in\mathcal{A}$ is componentwise nonnegative.
Hence,
$\max_{\alpha\in\mathcal{A}} \bar p^\top \alpha \le 0$,
and \eqref{eq:gamma0-iff-cone} implies $\gamma^*=0$. 
\end{proof}
}

{\color{black}
\begin{remark}\label{remark:gamma=0}



We provide some 
comments and interpretations for the conditions in Proposition~\ref{prop:gamma=0} and discuss the case when $\gamma^*\to \infty$.

\begin{enumerate}[label=(\roman*)]
\item 
For each comparison $k$, recall that $p^k_j=\mathbb{P}[W_k=y_j]-\mathbb{P}[Y_k=y_j]$ and $Z_k\in\{1,-1\}$ encodes the observed choice.
For $k=1,\ldots,K$, define the \emph{chosen} and \emph{rejected} lotteries as
\[
C_k:=\begin{cases}
W_k, & \text{if }\ Z_k=1,\\
Y_k, & \text{if }\ Z_k=-1,
\end{cases}
\qquad
R_k:=\begin{cases}
Y_k, & \text{if }\ Z_k=1,\\
W_k, & \text{if }\ Z_k=-1.
\end{cases}
\]
Then $Z_k p^k$ is exactly the probability-mass difference between the chosen and rejected lotteries, 
$$Z_k p^k_j=\mathbb{P}[C_k=y_j]-\mathbb{P}[R_k=y_j],\ j=1,\ldots,N,$$ and
$\bar p=\sum_{k=1}^K Z_k p^k$ aggregates these choice-aligned probability-mass difference across all comparisons. 
For any $\alpha\in\mathcal{A}$ and any lottery $Y$, define the expected utility under $\alpha$ as $\mathbb{E}_{\alpha}[Y]
:=\sum_{j=1}^N \mathbb{P}[Y=y_j]\alpha_j$.
It follows that
\[
\bar p^\top \alpha
=\sum_{k=1}^K Z_k 
\left(p^k\right)^{\top}\alpha
=\sum_{k=1}^K \left(\mathbb{E}_{\alpha}[C_k]  - \mathbb{E}_{\alpha}[R_k]\right),
\]
which represents the aggregate expected-utility advantage of the chosen lotteries $C_k$ over the rejected lotteries $R_k$ across $k=1,\ldots,K$, under the utility function 
parameterized
by the utility values $\alpha$ at breakpoints. 
Importantly, the condition $\bar p^\top \alpha>0$ ensures a strictly positive
aggregate expected-utility margin across all comparisons, 
but it does not necessarily require that $\mathbb{E}_\alpha[C_k]> \mathbb{E}_\alpha[R_k]$ holds for each $k$ individually.


\item Proposition \ref{prop:gamma=0} shows that $\gamma^*=0$ (equivalently, 
$\hat\sigma_{\rm MLE}=+\infty$)
holds if and only if
\[
\bar p^\top \alpha \le 0,\quad \forall \alpha\in\mathcal{A}.
\]
This means that 
there exists no utility function 
parameterized
by $\alpha\in \mathcal{A}$
that creates a strictly positive
aggregate 
expected-utility margin
in favor of the chosen lotteries
$C_k$ 
over the rejected lotteries $R_k$ across $k=1,\ldots,K$.
Then, by Proposition \ref{prop:MLE-piecewise}, it follows that in this case, there exists 
no economically meaningful VNM utility function $u\in\mathcal{U}_c$ consistent with the 
structural preference information (monotonicity, concavity, and Lipschitz continuity) that 
rationalizes the DM's observed choices. Here, rationalization is understood in the \emph{aggregate sense} described in part (i) above, rather than requiring each individual comparison to be rationalized separately.
Alternatively, one may remove some of these structural restrictions, as 
discussed in Remark \ref{remark:prior-MLE}, and correspondingly weaken condition \eqref{eq:gamma0-iff-cone} by enlarging the set $\mathcal{A}$.



\item
The sufficient condition 
\eqref{eq:gamma0-suff-cone} implies that 
\begin{equation*}
\bar p^\top \alpha \ \le\ 0,\ \forall \alpha\in\mathbb{R}^N_+. 
\end{equation*}
In this case, it follows that
even 
a much broader class of nonnegative-valued utility functions
(without monotonicity, concavity, and Lipschitz continuity restrictions)
cannot rationalize the DM's 
observed
choices in the \emph{aggregate sense}.

\item 
We then consider the opposite case where $\gamma^*\to \infty$ (equivalently, $\hat\sigma_{\rm MLE}\to 0$). If there exists $\alpha\in\mathcal A$ such that 
\[
Z_k (p^k)^\top \alpha \geq 0 \text{ for all } k=1,\ldots,K,\ \text{and } 
Z_{k_0} (p^{k_0})^\top \alpha> 0 \text{ for some } k_0\in\{1,\ldots,K\},\
\]
then the supremum $\bar c$ of $\gamma$ in problem \eqref{eq:MLE-re2} is attained. 
This sufficient condition requires the existence of a feasible VNM utility function $u\in\mathcal{U}_c$ that rationalizes the DM’s binary choices \emph{comparison by comparison}, rather than in the \emph{aggregate sense}.
Notice that, if we remove the upper-bound constraint on $\gamma$, the above condition becomes sufficient and necessary for 
$\gamma^*\to\infty$. 
Moreover, if we further remove the structural constraints on the utility function and 
only require
$\alpha\in\mathbb{R}^N_+$, this condition is exactly the analogue of the quasi-complete separation condition in logistic regression (see, e.g.,~\cite{albert1984existence,silvapulle1981existence}), 
under which the maximum likelihood estimator diverges. 
Compared with the classical results in the literature, our formulation 
imposes
structural restrictions on the utility function,  
and we additionally discuss the case $\gamma=0$ (equivalently, $\sigma\to \infty$) in Proposition~\ref{prop:gamma=0}, 
which 
is 
rarely considered
in traditional results that solve the MLE with a fixed $\sigma$. 


\end{enumerate}

\end{remark}
}

{\color{black}
\begin{proposition}\label{prop:unique}
Let  
\begin{equation}\label{eq:SigmaD}
    \Sigma_D:=\frac{1}{K}P^\top P\in\mathbb{R}^{(N-1)\times (N-1)}
\end{equation}
be the empirical information matrix
associated with the dataset $D$, where $P\in\mathbb{R}^{K\times (N-1)}$ is constructed from the reduced probability-mass
difference vectors $p^k_{2:N}$ as defined in Section~\ref{sec:stat-infer}. 
If $\Sigma_D$ is full rank, i.e., ${\rm rank}(\Sigma_D)={\rm rank}(P)=N-1$, and the optimal $\gamma^*>0$ in problem \eqref{eq:MLE-re2}, then 
$(\hat{u}_{\rm MLE}, \hat{\sigma}_{\rm MLE})$
is the unique optimal solution to problem $(B)$ in \eqref{eq-ab}. 
\end{proposition}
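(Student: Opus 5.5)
Uniqueness for problem $(B)$ reduces to uniqueness of the optimal $\bar\alpha^*$ in the convex program \eqref{eq:MLE-re2}; we prove the latter by strict concavity of the objective along the relevant directions. Two reductions come first. Elements of $\mathcal{U}_N$ are determined entirely by their breakpoint values $\alpha$, through the piecewise-linear formula of Definition~\ref{def:PLU}, so a pair $(u_N,\sigma)$ with $\sigma<\infty$ corresponds one-to-one to $(\bar\alpha,\gamma)=(\alpha/\sigma,1/\sigma)$ with $\gamma>0$. Moreover $\gamma=\bar\alpha_N$ and $\bar\beta$ is determined by $\bar\alpha$ via \eqref{const:MLE-beta-alpha}. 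Hence the map $(u_N,\sigma)\mapsto\bar\alpha$ from the feasible set of $(B)$ into the feasible set of \eqref{eq:MLE-re2} is injective. Any second optimal solution of $(B)$ would therefore give a second optimal $\bar\alpha$ of \eqref{eq:MLE-re2}, since the two problems share the same optimal value by Theorem~\ref{thm:MLE-reformulation}.

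To show the optimal $\bar\alpha$ is unique, write the objective as $f(\bar\alpha)=-\sum_{k}\phi(-Z_k (p^k)^\top\bar\alpha)$ with $\phi(t)=\log(1+e^t)$, which is strictly convex. Since $\bar\alpha_1=0$ on the feasible set, $(p^k)^\top\bar\alpha=(p^k_{2:N})^\top\bar\alpha_{2:N}$, so $f$ depends only on $x:=\bar\alpha_{2:N}$ through $Px$. Suppose $\bar\alpha^{(1)}\neq\bar\alpha^{(2)}$ are both optimal. Their difference lies in the coordinates $2{:}N$, so $d:=x^{(1)}-x^{(2)}\neq0$. Because $P$ has full column rank, $Pd\neq0$, and hence some $k$ has $(p^k_{2:N})^\top d\neq0$. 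For that $k$, the term $\phi(-Z_k (p^k_{2:N})^\top x)$ is strictly convex along the segment joining $x^{(1)}$ and $x^{(2)}$. The feasible set is convex, so the midpoint is feasible, and $f$ at the midpoint strictly exceeds the common optimal value, a contradiction. Equivalently, the Hessian $-\sum_k \phi''(\cdot)\,p^k_{2:N}(p^k_{2:N})^\top$ is negative definite whenever $P^\top P=K\Sigma_D\succ0$, because $\phi''>0$ everywhere. So $\bar\alpha^*$ is unique, which forces $\gamma^*=\bar\alpha^*_N$ and $\bar\beta^*$ to be unique as well.

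Finally, $\gamma^*>0$ lets us invert the change of variables: $\hat\sigma_{\rm MLE}=1/\gamma^*$ and $\hat\alpha^{\rm MLE}=\bar\alpha^*/\gamma^*$ are uniquely determined. This gives a unique element of $\mathcal{U}_N$ via \eqref{eq:N-piecewise utility function}, so $(\hat u_{\rm MLE},\hat\sigma_{\rm MLE})$ is the unique optimizer of $(B)$.

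The main obstacle is the role of $\gamma$. The objective does not contain $\gamma$ explicitly, so uniqueness of $\gamma^*$ must be recovered from the constraint $\bar\alpha_N=\gamma$; this requires that the full-rank condition covers the $N$-th coordinate, which it does because $P$ includes column $N$. We must also exclude the degenerate value $\sigma=+\infty$, which is exactly what the assumption $\gamma^*>0$ does, and which is why that hypothesis appears in the statement.
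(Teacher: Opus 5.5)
Your proposal is correct and follows essentially the paper's route: full column rank of $P$, together with strict convexity of $\log(1+e^t)$, makes the objective of \eqref{eq:MLE-re2} strictly concave in $\bar\alpha_{2:N}$, so $(\bar\alpha^*,\bar\beta^*,\gamma^*)$ is unique, and $\gamma^*>0$ lets you invert the change of variables. The differences are minor: the paper proves existence separately (Weierstrass on the compact feasible set) and uses a uniform curvature bound $M>0$ to get strong concavity, whereas you take existence as implied by the hypothesis on $\gamma^*$, argue via pointwise strict convexity and a midpoint, and make the injection from $(B)$ into \eqref{eq:MLE-re2} explicit, which states the paper's Step~3 more carefully.
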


\begin{proof}

According to Theorem \ref{thm:MLE-reformulation} and its proof, 
solving problem $(B)$ in \eqref{eq-ab} is equivalent to solving the convex program \eqref{eq:MLE-re2}. 
If the resulting optimal $\gamma^*>0$, we then recover $\hat{\alpha}^{\rm MLE}$, $\hat{\beta}^{\rm MLE}$, and $\hat{\sigma}_{\rm MLE}$, and subsequently construct $\hat{u}_{\rm MLE}$ via \eqref{eq:N-piecewise utility function}. 
Therefore, we proceed with the proof in three steps: (i) existence of an optimal solution to \eqref{eq:MLE-re2}, (ii) uniqueness of the optimal solution of \eqref{eq:MLE-re2} under ${\rm rank}(\Sigma_D)={\rm rank}(P)=N-1$, and (iii) uniqueness of $\hat{u}_{\rm MLE}$ together with $\hat{\sigma}_{\rm MLE}$.

\underline{Step 1.} 
Let $\mathcal{F}$ denote the feasible set of \eqref{eq:MLE-re2}. First, take $\tilde{\alpha}_j=0$ for all $j=1,\ldots,N$, $\tilde{\gamma}=0$, and $\tilde{\beta}_j=0$ for all $j=1,\ldots,N-1$. 
Then $(\tilde{\alpha},\tilde{\beta},\tilde{\gamma})\in\mathcal{F}$, so $\mathcal{F}\neq\emptyset$. 
Next, $\mathcal{F}$ is closed since it is defined by linear equalities/inequalities together with the box constraint $\gamma\in[0,\bar c]$. 
Moreover, for any $(\bar{\alpha},\bar{\beta},\gamma)\in\mathcal{F}$, we have $0\leq\bar{\alpha}_j\leq\gamma\leq\bar{c}$ for $j=1,\ldots,N$, $0\leq\bar{\beta}_j\leq L\gamma\leq L\bar{c}$ for $j=1,\ldots,N-1$, and $0\leq\gamma\leq\bar{c}$, which implies $\mathcal{F}$ is bounded. 
Hence $\mathcal{F}$ is compact. 
Denote the objective in \eqref{const:MLE-obj} by 
\[
f(\bar\alpha):=-\sum_{k=1}^K 
\log \left[1+\exp{\left(\sum_{j=1}^N -Z_k p^k_j
\bar{\alpha}_j\right)}\right]=-\sum_{k=1}^K 
\log \left[1+\exp{\left(\sum_{j=2}^N -Z_k p^k_j
\bar{\alpha}_j\right)}\right], 
\]
where the equality follows from $\bar{\alpha}_1=0$. 
The function $f(\cdot)$ is continuous 
as it is a finite sum of compositions of continuous functions (affine mapping, $\exp(\cdot)$, and $\log(\cdot)$). 
By the Weierstrass theorem, $f$ attains its maximum over the nonempty compact set $\mathcal{F}$. 
Therefore, problem \eqref{eq:MLE-re2} admits at least one optimal solution $(\bar\alpha^*,\bar\beta^*,\gamma^*)$.

\underline{Step 2.} 
As $\bar\alpha_1=0$, we define the reduced variable 
$\bar{\alpha}_{2:N}:=[\bar{\alpha}_2,\ldots,\bar{\alpha}_N]\in\mathbb{R}^{N-1}$, 
and restrict attention to 
the reduced objective 
$\bar f(\bar{\alpha}_{2:N}):=f((0;\bar{\alpha}_{2:N}))$. 
We compute the Hessian of $\bar{f}(\cdot)$ with respect to $\bar{\alpha}_{2:N}$ as: 
\begin{align*}
\nabla^2 \bar{f}(\bar{\alpha}_{2:N})
& = -\sum_{k=1}^K
\frac{\exp\!\left(\sum_{j=2}^N Z_k p_j^k \bar\alpha_j\right)}
{\left[1+\exp\!\left(\sum_{j=2}^N Z_k p_j^k \bar\alpha_j\right)\right]^2}\,
p^k_{2:N}\,(p^k_{2:N})^\top.
\end{align*}
Since $\bar\alpha_j\in[0,\bar c]$ and $|p_j^k|\le 1$, the quantity
$\sum_{j=2}^N -Z_k p_j^k \bar\alpha_j$ is uniformly bounded on $\mathcal{F}$. 
As the function 
$g(t):=\frac{e^{t}}{(1+e^{t})^{2}}$ is continuous and strictly positive on $t\in\mathbb{R}$, there exists $M>0$ such that 
$$\frac{\exp\left({\sum_{j=2}^N Z_k p_j^k \bar\alpha_j}\right)}{\left[1+\exp\left({\sum_{j=2}^N Z_k p_j^k \bar\alpha_j}\right)\right]^2}\geq M$$ for all feasible $\bar{\alpha}_{2:N}$.
Consequently, $$\nabla^2 \bar{f}(\bar{\alpha}_{2:N})\preceq -M \sum_{k=1}^K p^k_{2:N}(p^k_{2:N})^{\top}=-MP^{\top}P$$ for all feasible $\bar{\alpha}_{2:N}$. 
Thus, for any $v\in\mathbb{R}^{N-1}$, 
\begin{equation*}
v^{\top}\nabla^2 \bar{f}(\bar{\alpha}_{2:N})v\leq -M v^{\top}P^{\top}Pv=-M\|Pv\|^2_2\leq0. 
\end{equation*}
Moreover, $v^\top \nabla^2 \bar{f}(\bar{\alpha}_{2:N}) v=0$ implies $\|Pv\|_2^2=0$, i.e., $Pv=\mathbf{0}$, which further implies $v=\mathbf{0}$ since ${\rm rank}(P)=N-1$. 
Therefore, $\nabla^2 \bar{f}(\bar{\alpha}_{2:N})$ is strictly negative definite on $\mathbb{R}^{N-1}$,
and $\bar{f}(\cdot)$ is strongly concave in $\bar\alpha_{2:N}$. 
It follows that the convex program \eqref{eq:MLE-re2} admits a unique optimal solution $(\bar\alpha^*,\bar\beta^*,\gamma^*)$.

\underline{Step 3.} 
By Step 2, the optimizer $(\bar\alpha^*,\bar\beta^*,\gamma^*)$ of \eqref{eq:MLE-re2} is unique. 
When $\gamma^*>0$, the quantities
\[
\hat\alpha^{\mathrm{MLE}}=\frac{\bar\alpha^*}{\gamma^*},\quad
\hat\beta^{\mathrm{MLE}}=\frac{\bar\beta^*}{\gamma^*},\quad
\hat\sigma_{\mathrm{MLE}}=\frac{1}{\gamma^*}
\]
are uniquely determined as in Theorem~\ref{thm:MLE-reformulation}. 
Finally, $\hat u_{\mathrm{MLE}}$ in \eqref{eq:N-piecewise utility function} is uniquely constructed from $\hat\alpha^{\mathrm{MLE}}$ and $\hat\beta^{\mathrm{MLE}}$ as in \eqref{eq:N-piecewise utility function}. Therefore, $(\hat u_{\mathrm{MLE}},\hat\sigma_{\mathrm{MLE}})$ is unique. 
\end{proof}
}

{\color{black}Based on Theorem \ref{thm:MLE-reformulation} and Propositions \ref{prop:MLE-piecewise}--\ref{prop:unique}, we can now characterize 
the optimal solution set $\mathcal{U}^*_c$ of 
problem $(A)$ in \eqref{eq-ab} in 
Proposition \ref{thm:optimal-set-MLE} below. 
In effect, the MLE procedure identifies a family of VNM utility models 
of the form $U(\cdot) = \mathbb{E}[u(\cdot)] + \varepsilon$, 
where $u \in \mathcal{U}_c^*$ and $\varepsilon \sim \text{Gumbel}(0, \hat{\sigma}_{{\rm MLE}})$. 
}

\begin{proposition}\label{thm:optimal-set-MLE}
{\color{black}
Assume that the empirical information matrix $\Sigma_D$ is full rank and that the optimal $\gamma^*>0$ in problem \eqref{eq:MLE-re2}. 
With
the MLE-based VNM utility function $\hat{u}_{\rm MLE}$ 
and scale parameter $\hat{\sigma}_{\rm MLE}$
derived 
in Theorem~\ref{thm:MLE-reformulation}, define 
} 
\begin{equation}\label{eq:MLE-u-opt-set}
    \mathcal{U}_c^*:=\{u\in\mathcal{U}_c\mid \ u(y_j)=
    \hat{u}_{\rm MLE}(y_j),
    \ j=1,\ldots,N\}, 
\end{equation}
{\color{black}then $\left\{(u,\hat{\sigma}_{\rm MLE})\mid u\in\mathcal{U}^*_c\right\}=\mathop{\arg\max}\limits_{u\in\mathcal{U}_c,\ \sigma\geq\frac{1}{\bar{c}}}\ \ell(u,\sigma)$.
Equivalently, $\mathcal{U}_c^*=\mathop{\arg\max}\limits_{u\in\mathcal{U}_c
}\ \ell(u,\hat{\sigma}_{\rm MLE})$.}

\end{proposition}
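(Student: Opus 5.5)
The plan is to prove the two inclusions between $\{(u,\hat\sigma_{\rm MLE}): u\in\mathcal{U}_c^*\}$ and $\arg\max_{u\in\mathcal{U}_c,\sigma\ge 1/\bar c}\ell(u,\sigma)$. The argument uses three earlier facts. First, $\ell(u,\sigma)$ depends on $u$ only through the breakpoint values $\alpha_j=u(y_j)$, so $\ell(u,\sigma)=\ell(\mathcal{T}u,\sigma)$. Second, problems $(A)$ and $(B)$ in \eqref{eq-ab} have equal optimal values (Proposition~\ref{prop:MLE-piecewise}). Third, under full rank of $\Sigma_D$ and $\gamma^*>0$, the optimizer of $(B)$ is unique (Proposition~\ref{prop:unique}).

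For ``$\subseteq$'', I take $u\in\mathcal{U}_c^*$. Its breakpoint values agree with those of $\hat u_{\rm MLE}$, so $\ell(u,\hat\sigma_{\rm MLE})=\ell(\hat u_{\rm MLE},\hat\sigma_{\rm MLE})$. By Theorem~\ref{thm:MLE-reformulation} the right-hand side equals the optimal value of $(A)$. Since $u\in\mathcal{U}_c$ and $\hat\sigma_{\rm MLE}=1/\gamma^*\ge 1/\bar c$, the pair $(u,\hat\sigma_{\rm MLE})$ is feasible and hence optimal.

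For ``$\supseteq$'', I take any optimal $(\tilde u,\tilde\sigma)$ of $(A)$. The pair $(\mathcal{T}\tilde u,\tilde\sigma)$ is feasible for $(B)$, because $\mathcal{T}\tilde u\in\mathcal{U}_N$. It attains $\ell(\mathcal{T}\tilde u,\tilde\sigma)=\ell(\tilde u,\tilde\sigma)=d_A=d_B$, so it is optimal for $(B)$. Uniqueness in Proposition~\ref{prop:unique} then gives $\mathcal{T}\tilde u=\hat u_{\rm MLE}$ and $\tilde\sigma=\hat\sigma_{\rm MLE}$. Since $\mathcal{T}\tilde u$ agrees with $\tilde u$ at the breakpoints, we get $\tilde u(y_j)=\hat u_{\rm MLE}(y_j)$ for all $j$, that is, $\tilde u\in\mathcal{U}_c^*$.

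The main obstacle is applying uniqueness correctly. Proposition~\ref{prop:unique} gives uniqueness of the optimizer $(\bar\alpha^*,\bar\beta^*,\gamma^*)$ of the convex program \eqref{eq:MLE-re2}, not directly of $(B)$. I will verify that the map $(u_N,\sigma)\mapsto(\bar\alpha,\bar\beta,\gamma)=(\alpha/\sigma,\beta/\sigma,1/\sigma)$ from the proof of Theorem~\ref{thm:MLE-reformulation} is a value-preserving bijection between feasible points of $(B)$ with finite $\sigma$ and feasible points of \eqref{eq:MLE-re2} with $\gamma>0$. Here $u_N\in\mathcal{U}_N$ is determined by its breakpoint values. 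It follows that any optimal $(\mathcal{T}\tilde u,\tilde\sigma)$ of $(B)$ maps to an optimizer of \eqref{eq:MLE-re2}. That optimizer must be the unique $(\bar\alpha^*,\bar\beta^*,\gamma^*)$, and since $\gamma^*>0$, $\tilde\sigma$ is finite and equal to $1/\gamma^*$.

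The strict concavity in Step~2 of Proposition~\ref{prop:unique} gives uniqueness of $\bar\alpha^*$. Uniqueness of $\gamma^*$ then follows from $\gamma=\bar\alpha_N$, and uniqueness of $\bar\beta^*$ from constraint \eqref{const:MLE-beta-alpha}.

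Finally, I will obtain the equivalent statement $\mathcal{U}_c^*=\arg\max_{u\in\mathcal{U}_c}\ell(u,\hat\sigma_{\rm MLE})$ as follows. Fixing $\sigma=\hat\sigma_{\rm MLE}$, the maximal value over $u$ is $\ell(\hat u_{\rm MLE},\hat\sigma_{\rm MLE})=d_A$. Hence $u$ maximizes $\ell(\cdot,\hat\sigma_{\rm MLE})$ over $\mathcal{U}_c$ if and only if $(u,\hat\sigma_{\rm MLE})$ is optimal for $(A)$. By the first part, this holds if and only if $u\in\mathcal{U}_c^*$.
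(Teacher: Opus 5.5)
Your proposal is correct and follows essentially the same route as the paper. For the ``$\supseteq$'' direction you use $\ell(\tilde u,\tilde\sigma)=\ell(\mathcal{T}\tilde u,\tilde\sigma)$, the equality of the optimal values of $(A)$ and $(B)$, and the uniqueness from Proposition~\ref{prop:unique}; for the ``$\subseteq$'' direction you use that $\ell$ depends on $u$ only through its breakpoint values. Your explicit check that the rescaling $(\alpha,\beta,\sigma)\mapsto(\alpha/\sigma,\beta/\sigma,1/\sigma)$ transfers uniqueness from \eqref{eq:MLE-re2} to $(B)$, and your derivation of the ``equivalently'' form, fill in details the paper leaves implicit.
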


{\color{black}
\begin{proof}

By 
\eqref{eq:log-likelihood-re}, 
the log-likelihood
$\ell(u,\sigma)$ depends on $u$ only through its values at the breakpoints $y_1,\ldots,y_N$.
Hence, if $u_1,u_2\in\mathcal{U}_c$ satisfy $u_1(y_j)=u_2(y_j)$ for all $j=1,\ldots,N$, 
then $\ell(u_1,\sigma)=\ell(u_2,\sigma)$. 

Since $\mathcal U_N\subseteq\mathcal U_c$ and problems $(A)$ and $(B)$ in \eqref{eq-ab} share the same optimal value (Proposition~\ref{prop:MLE-piecewise}), 
problem $(B)$ is a restriction of problem $(A)$. 
Moreover, under the assumptions in this proposition, 
Proposition~\ref{prop:unique} implies that $(\hat u_{\rm MLE},\hat\sigma_{\rm MLE})$ 
is the unique optimal solution to problem $(B)$.

Let $(u^*,\sigma^*)\in\mathop{\arg\max}_{u\in\mathcal U_c,\ \sigma\ge \frac{1}{\bar c}}\ \ell(u,\sigma)$ be any optimal solution of problem $(A)$.
Denote 
by $\mathcal{T} u^*\in\mathcal{U}_N$ its piecewise-linear lower approximation, which satisfies 
$\mathcal{T} u^*(y_j)=u^*(y_j)$ for $j=1,\ldots,N$. 
Then $\ell(u^*,\sigma^*)=\ell(\mathcal{T} u^*,\sigma^*)$. 
Since $\mathcal T u^*\in\mathcal U_N$ and $(B)$ has the same optimal value as $(A)$, it follows that
$(\mathcal T u^*,\sigma^*)$ is
optimal for problem $(B)$. 
By uniqueness, we obtain 
$\sigma^*=\hat{\sigma}_{\rm MLE}$ and $\mathcal{T} u^*=\hat{u}_{\rm MLE}$. 
Hence, $u^*(y_j)=\mathcal{T} u^*(y_j)=\hat{u}_{\rm MLE}(y_j)$ for $j=1,\ldots,N$, which implies
$u^*\in\mathcal{U}^*_c$. 

Conversely, for any $u
\in\mathcal U_c^\ast
$, 
we have 
$\ell(u
,\hat\sigma_{\mathrm{MLE}})=\ell(\hat u_{\mathrm{MLE}},\hat\sigma_{\mathrm{MLE}})$.
Since $(\hat u_{\mathrm{MLE}},\hat\sigma_{\mathrm{MLE}})$ attains the optimal value of problem $(B)$,
and problems $(A)$ and $(B)$ share the same optimal value,
$(u,\hat\sigma_{\rm MLE})$ attains the optimal value of problem $(A)$.



Combining the two inclusions yields the desired conclusions. 
\end{proof}}

\subsection{Statistical consistency of MLE}\label{sec:thm-MLE}

This subsection 
presents 
the statistical 
convergence analysis 
of the MLE. 
We first 
{\color{black}derive}
$\ell_2$ and $\ell_{\infty}$ statistical errors between the MLE-based parameters $(\hat{\alpha}^{\rm MLE}, \hat{\sigma}_{\rm MLE})$ and the true parameters $(\alpha^*, \sigma^*)$ 
in Theorem \ref{thm:alpha-sigma-matrix} and {\color{black}Proposition} \ref{thm:2-infty}, and 
establish the corresponding convergence rate in Corollary \ref{coll:bound-param}.  
We then establish the convergence results, 
{\color{black}in the sense of the Kolmogorov distance}, 
of the MLE-based adjusted VNM utility function
$\frac{\hat{u}_{\rm MLE}}{\hat{\sigma}_{\rm MLE}}$ to the true 
adjusted utility function
$\frac{u^*}{\sigma^*}$, 
as stated in Theorem \ref{thm:bound-utility} and 
Corollary \ref{coll:bound-utility}.

We begin by introducing the notation used in this subsection. 
Let  
{\color{black}$\mathcal{A}$ be defined as in \eqref{eq:alpha-set}}. 
Recall that $u^*$ denotes the DM's true VNM utility function and $\sigma^*$ denotes the true scale parameter {\color{black}as in Assumption \ref{assump:VNM}}.  
{\color{black}Let}
$\alpha^*:=[\alpha^*_1,\ldots,\alpha^*_N]$ 
{\color{black}denote}
the true VNM utility values at the breakpoints in $\mathbb{Y}$, i.e., $\alpha^*_j=u^*(y_j)$, $j=1,\ldots,N$. 
{\color{black}For $\alpha\in\mathcal{A}$ and 
$\sigma\geq\frac{1}{\bar{c}}$, define}
the adjusted utility value vector
$\theta:=[\theta_1,\ldots,\theta_N]^{\top}=[\frac{\alpha_1}{\sigma},\ldots,\frac{\alpha_N}{\sigma}]^{\top}$,  
and let 
$${\Theta}:=\left\{\theta \ \big| \ \theta=\frac{\alpha}{\sigma},\ \alpha\in\mathcal{A},\ \sigma\geq\frac{1}{\bar{c}}\right\}.$$
{\color{black}  
We introduce a fixed reduction matrix $Q:=[\mathbf{0}_{(N-1)\times1}, \mathbf{I}_{N-1}]\in\mathbb{R}^{(N-1)\times N}$, where $\mathbf{0}$ denotes the zero vector and $\mathbf{I}$ denotes the identity matrix, 
then $Qp^k=p^k_{2:N} \in\mathbb{R}^{N-1}$ for $k=1,\ldots,K$. 
Note that the normalization constraint $\alpha_1=0$ implies 
$\theta_1=0$ for all $\theta\in\Theta$, so the free variables are $Q\theta= \theta_{2:N}:=[\theta_2,\ldots,\theta_N]^\top\in\mathbb{R}^{N-1}$. 
} 
By 
{\color{black}the above}
transformation, {\color{black}we have $\theta^{\top}{p}^k=(Q\theta)^{\top}p^k_{2:N}$ for any $\theta\in\Theta$, $k=1,\ldots,K$},   
{\color{black}and}
the log-likelihood function $\ell(u,\sigma)$ in \eqref{eq:log-likelihood-re} 
{\color{black}can be rewritten as a function of $\theta$ as follows:}
\begin{align*}
\ell(\theta)=
{\color{black}-\sum_{k=1}^K\log\left[1+\exp\left(-Z_k\theta^{\top}p^k\right)\right]
=-\sum_{k=1}^K\log\left[1+\exp{\left(-Z_k(Q\theta)^{\top}p^k_{2:N}\right)}\right]}.
\end{align*}



\begin{lemma}\label{lemma:theta-bound}
{\color{black}Let $\theta^1,\theta^2\in\Theta$ be}
two adjusted utility value vectors.  
{\color{black}Then}
\begin{equation}
 \|{\theta}^1-\theta^2\|_2\leq \bar{c}\sqrt{N-1}. 
\end{equation}
For the MLE 
{\color{black}estimates}
$\hat{\alpha}^{\rm MLE}:=[\hat{\alpha}^{\rm MLE}_1,\ldots,\hat{\alpha}^{\rm MLE}_N]$ 
and $\hat{\sigma}_{\rm MLE}$ 
{\color{black}obtained}
in Theorem~\ref{thm:MLE-reformulation}, the MLE-based adjusted utility value vector $\hat{\theta}^{\rm MLE}:= \frac{\hat{\alpha}^{\rm MLE}}{\hat{\sigma}_{\rm MLE}}$ and 
the true adjusted utility value vector $\theta^*:=\frac{\alpha^*}{\sigma^*}$ both belong to 
$\Theta$, and 
{\color{black}hence}
satisfy $\|\hat{\theta}^{\rm MLE}-\theta^*\|_2\leq \bar{c}\sqrt{N-1}$. 
\end{lemma}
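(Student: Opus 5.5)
The bound will follow from coordinatewise box constraints on $\Theta$. First I will show that every $\theta\in\Theta$ satisfies $\theta_1=0$ and $0\le\theta_j\le\bar c$ for $j=2,\ldots,N$. Indeed, if $\theta=\alpha/\sigma$ with $\alpha\in\mathcal{A}$ and $\sigma\ge 1/\bar c$, the constraints defining $\mathcal{A}$ in \eqref{eq:alpha-set} give $0=\alpha_1\le\alpha_2\le\cdots\le\alpha_N=1$. Hence $0\le\alpha_j\le 1$, and since $0<1/\sigma\le\bar c$ we obtain $0\le\theta_j=\alpha_j/\sigma\le\bar c$, with $\theta_1=0$.

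Given two vectors $\theta^1,\theta^2\in\Theta$, their first coordinates cancel, and for $j\ge 2$ both coordinates lie in $[0,\bar c]$, so $|\theta^1_j-\theta^2_j|\le\bar c$. Summing the $N-1$ nonzero squared differences gives
\begin{equation*}
\|\theta^1-\theta^2\|_2^2=\sum_{j=2}^N(\theta^1_j-\theta^2_j)^2\le (N-1)\bar c^2,
\end{equation*}
which is the claimed bound after taking square roots.

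It remains to check membership of the two specific vectors in $\Theta$. For $\theta^*$, Assumption~\ref{assump:VNM} gives $u^*\in\mathcal{U}_c$, so $\alpha^*_j=u^*(y_j)$ is the vector of values of a monotone, concave, $L$-Lipschitz, normalized function at the breakpoints. Its slopes $(\alpha^*_{j+1}-\alpha^*_j)/(y_{j+1}-y_j)$ are therefore nonincreasing in $j$ (by the chord-slope property of concave functions), nonnegative, and bounded by $L$, so $\alpha^*\in\mathcal{A}$; together with $\sigma^*\ge1/\bar c$ this yields $\theta^*\in\Theta$. For $\hat\theta^{\rm MLE}$, Theorem~\ref{thm:MLE-reformulation} (implicitly in the case $\gamma^*>0$, where the estimate is defined) gives $\hat\alpha^{\rm MLE}=\bar\alpha^*/\gamma^*$ and $\hat\beta^{\rm MLE}=\bar\beta^*/\gamma^*$. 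Dividing the constraints \eqref{const:MLE-beta-alpha}--\eqref{const:MLE-alpha} by $\gamma^*>0$ shows that $\hat\alpha^{\rm MLE}\in\mathcal{A}$, as in the proof of Proposition~\ref{prop:gamma=0}. Moreover, $\hat\sigma_{\rm MLE}=1/\gamma^*\ge1/\bar c$ because $\gamma^*\le\bar c$, so $\hat\theta^{\rm MLE}\in\Theta$.

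None of these steps is difficult. The only delicate point is the first-coordinate cancellation, which is what produces the factor $\sqrt{N-1}$ rather than $\sqrt N$, together with the implicit restriction to $\gamma^*>0$ that is needed for the MLE vector to be defined.
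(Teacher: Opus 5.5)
Your proposal is correct and follows the paper's argument: the coordinatewise bounds $\theta_1=0$ and $0\le\theta_j\le\bar c$ for $j\ge 2$, followed by summing the $N-1$ nonzero squared differences. Your explicit check that $\theta^*$ and $\hat\theta^{\rm MLE}$ lie in $\Theta$ fills in a step the paper's proof leaves unstated; this includes dividing the constraints of the convex program by $\gamma^*$ and noting the implicit restriction $\gamma^*>0$.
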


\begin{proof}
By the definition of $\Theta$ and $\mathcal{A}$, for any $\theta\in\Theta$, it is obvious that 
\begin{equation*}
\theta_1=0,\ 0\leq\theta_j=\frac{\alpha_j}{\sigma}\leq\frac{1}{\sigma}\leq \bar{c},\  j=2,\ldots,N.  
\end{equation*}
Thus $|\theta^1_1-\theta^2_1|=0$ and $|\theta^1_j-\theta^2_j|\leq\max\{\theta^1_j,\ \theta^2_j\}\leq\bar{c}$, $j=2,\ldots,N$. 
Then, we have 
\begin{equation*}
\left\|{\theta}^1-\theta^2\right\|_2 
=\sqrt{\sum_{j=2}^{N}\left(\theta^1_j-\theta^2_j\right)^2}\leq \bar{c}\sqrt{N-1}.  
\end{equation*} 
\end{proof}

\begin{lemma}\label{lemma:gradient-bound}
For any $\theta\in\Theta$, we have $\left|{\color{black}(Q\theta)^{\top}p^k_{2:N}}\right|
{\color{black}=\left|\theta_{2:N}^{\top}\ p^k_{2:N}\right|}
\leq2\bar{c}$, $k=1,\ldots,K$. 
\end{lemma}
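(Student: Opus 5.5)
The bound follows from splitting the inner product into its positive and negative parts. Fix $\theta\in\Theta$ and $k\in\{1,\ldots,K\}$. By the proof of Lemma~\ref{lemma:theta-bound}, every coordinate satisfies $0\le\theta_j\le\bar c$ for $j=2,\ldots,N$. Recall that $p^k_j=\mathbb{P}[W_k=y_j]-\mathbb{P}[Y_k=y_j]$ is a difference of two probability mass functions on $\mathbb{Y}$. Set $a_j:=\mathbb{P}[W_k=y_j]\ge 0$ and $b_j:=\mathbb{P}[Y_k=y_j]\ge 0$. Then
\[
\theta_{2:N}^{\top}p^k_{2:N}=\sum_{j=2}^N a_j\theta_j-\sum_{j=2}^N b_j\theta_j .
\]

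Each of the two sums lies in $[0,\bar c]$. Indeed, $0\le\sum_{j=2}^N a_j\theta_j\le \bar c\sum_{j=2}^N a_j\le \bar c$, because $\sum_{j=1}^N a_j=1$, and the same argument applies to $b$. Hence $|\theta_{2:N}^{\top}p^k_{2:N}|\le\max\{\sum_j a_j\theta_j,\sum_j b_j\theta_j\}\le\bar c$. This is already at most $2\bar c$.

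If one prefers a cruder argument, the triangle inequality gives
\[
|\theta_{2:N}^{\top}p^k_{2:N}|\le \|\theta_{2:N}\|_\infty\,\|p^k_{2:N}\|_1\le \bar c\,(\|a\|_1+\|b\|_1)\le 2\bar c ,
\]
which is exactly the stated constant. The equality $(Q\theta)^\top p^k_{2:N}=\theta_{2:N}^\top p^k_{2:N}$ holds by the definition of $Q$.

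There is no real obstacle. The only point needing care is that the bound on $\theta_j$ relies on $\alpha_j\le\alpha_N=1$, which comes from the monotonicity in $\mathcal{A}$, together with $1/\sigma\le\bar c$.
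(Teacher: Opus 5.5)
Your proof is correct. Your fallback estimate $\|\theta_{2:N}\|_\infty\|p^k_{2:N}\|_1\le 2\bar c$ is essentially the paper's argument. The paper uses $\theta_1=0$ to write $(Q\theta)^{\top}p^k_{2:N}$ as the difference of the two expected adjusted utilities $\sum_{j}\mathbb{P}[W_k=y_j]\theta_j$ and $\sum_{j}\mathbb{P}[Y_k=y_j]\theta_j$. It bounds each by $\max_j\theta_j\le\bar c$ and adds them with the triangle inequality.

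Your primary argument uses the same decomposition but exploits the fact that both expected values lie in $[0,\bar c]$. Their difference is therefore at most $\bar c$ in absolute value, which halves the constant. The bound $\bar c$ is attained on $\Theta$: take $\sigma=1/\bar c$, $W_k$ a point mass at $\bar b$, and $Y_k$ a point mass at $0$.

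The gain matters downstream. The lemma is used only to lower-bound the logistic curvature weights $e^{t}/(1+e^{t})^2$. With your bound, the proof of Theorem~\ref{thm:alpha-sigma-matrix} could take $\omega=1/(2+2e^{\bar c})$ instead of $1/(2+2e^{2\bar c})$. That shrinks the $1/\omega$ factor in the resulting error bounds by roughly $e^{\bar c}$.

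The price is that your refinement needs $\theta\ge 0$, which here follows from monotonicity and $\alpha_1=0$. The triangle-inequality version needs only $|\theta_j|\le\bar c$.
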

\begin{proof} 
For a fixed $k$, 
since $\sum_{j=1}^N \mathbb{P}[W_k=y_j] =1$ and $\sum_{j=1}^N \mathbb{P}[Y_k=y_j] =1$, we have 
$$
\left| \sum_{j=1}^N \mathbb{P}[W_k=y_j] \theta_j\right|\leq\max\limits_{j=1,\ldots,N}\ \theta_j \leq \bar{c},
$$
and $\left| \sum_{j=1}^N \mathbb{P}[Y_k=y_j] \theta_j\right|\leq \bar{c}$. Then, 
\begin{equation*}
\left|{\color{black}(Q\theta)^{\top}p^k_{2:N}}\right|
=\left|\sum_{{\color{black}j=2}}^N \left(\mathbb{P}[W_k=y_j]-\mathbb{P}[Y_k=y_j]\right)\theta_j \right|
\leq \left| \sum_{j=1}^N \mathbb{P}[W_k=y_j]\theta_j\right|+\left| \sum_{j=1}^N \mathbb{P}[Y_k=y_j]\theta_j\right|\leq 2\bar{c}. 
\end{equation*}
\end{proof}

\begin{theorem}\label{thm:alpha-sigma-matrix}
For any $\lambda>0$ 
and 
{\color{black}any $\delta\in(0,1)$}, 
with probability at least $1-\delta$, we have 
\begin{equation*}
\left\| \frac{\hat{\alpha}^{\rm MLE}}{\hat{\sigma}_{\rm MLE}}-\frac{\alpha^*}{\sigma^*} \right\|_{{\color{black}Q^{\top}(\Sigma_D+\lambda \mathbf{I})Q}}
\leq \sqrt{\frac{R_D+2\sqrt{-R_D\log\delta}-2\log\delta}{\omega^2 K}}
+\sqrt{\frac{R_D+2\sqrt{-R_D\log\delta}-2\log\delta}{\omega^2 K}+\lambda\left(\bar{c}\sqrt{N-1}\right)^2}, 
\end{equation*}
where $\Sigma_D=\frac{1}{K}P^{\top}P{\color{black}\;\in\mathbb{R}^{(N-1)\times(N-1)}}$ {\color{black}is the empirical information matrix of the dataset $D$ defined in~\eqref{eq:SigmaD}}, 
$R_{D}:={\rm rank}(\Sigma_D)$, and $\omega:=\frac{1}{2+2\exp{(2\bar{c})}}$. 
\end{theorem}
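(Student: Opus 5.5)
This is the standard argument of Zhu et al.\ (2023) for BTL/logistic MLE, adapted to the reduced parametrization $Q\theta$. Write $\Delta:=\hat{\theta}^{\rm MLE}-\theta^*$ and $\ell_D(\theta):=-\frac1K\ell(\theta)$ for the normalized negative log-likelihood. By Theorem~\ref{thm:MLE-reformulation} and the change of variables $\theta=\alpha/\sigma$, the point $\hat{\theta}^{\rm MLE}$ minimizes $\ell_D$ over $\Theta$, and $\theta^*\in\Theta$ by Lemma~\ref{lemma:theta-bound}. Hence
\[
\ell_D(\theta^*+\Delta)-\ell_D(\theta^*)\le 0 .
\]
Because $\theta_1=0$ on $\Theta$, everything can be written in terms of $Q\Delta\in\mathbb{R}^{N-1}$ and the vectors $p^k_{2:N}$. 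Note that $\|Q\Delta\|^2_{\Sigma_D+\lambda I}=\|\Delta\|^2_{Q^\top(\Sigma_D+\lambda I)Q}$.

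\textbf{Strong convexity.} The Hessian of $\ell_D$ with respect to $Q\theta$ is
\[
\frac1K\sum_k \sigma'\bigl(Z_k(Q\theta)^\top p^k_{2:N}\bigr)\,p^k_{2:N}(p^k_{2:N})^\top,
\qquad \sigma'(t)=\frac{e^t}{(1+e^t)^2}.
\]
By Lemma~\ref{lemma:gradient-bound}, $|t|\le 2\bar c$ on $\Theta$, so $\sigma'(t)\ge \frac{e^{2\bar c}}{(1+e^{2\bar c})^2}\ge \frac{1}{2+2e^{2\bar c}}\cdot\frac{1}{2}$. The constant needs checking here; it should be $\omega$ or $\omega$ up to a factor absorbed in the final display. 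Taylor's theorem along the segment, which stays in the convex set $\Theta$, then gives
\[
\ell_D(\theta^*+\Delta)-\ell_D(\theta^*)-\nabla\ell_D(\theta^*)^\top Q\Delta\ \ge\ \omega\|Q\Delta\|^2_{\Sigma_D}.
\]

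\textbf{Gradient concentration.} This step is the main obstacle. The gradient is $\nabla\ell_D(\theta^*)=-\frac1K P^\top V$, where
\[
V_k=Z_k\bigl(1-\sigma(Z_k(Q\theta^*)^\top p^k_{2:N})\bigr).
\]
Under Assumptions~\ref{assump:VNM}--\ref{assump:BTL-utility}, the $V_k$ are independent, mean zero, and bounded in $[-1,1]$, hence $1$-sub-Gaussian. I would bound $\|\nabla\ell_D(\theta^*)\|^2_{(\Sigma_D+\lambda I)^{-1}}=\frac1{K^2}V^\top M V$ with $M=P(\Sigma_D+\lambda I)^{-1}P^\top$. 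One checks that $M\preceq K\Pi$, where $\Pi$ is the projection onto the range of $P$, so $\operatorname{tr}M\le KR_D$, $\operatorname{tr}M^2\le K^2R_D$, and $\|M\|_{op}\le K$. The Hanson--Wright inequality in the form of Hsu--Kakade--Zhang for sub-Gaussian quadratic forms then gives, with probability at least $1-\delta$,
\[
V^\top MV\le \operatorname{tr}M+2\sqrt{\operatorname{tr}(M^2)\log(1/\delta)}+2\|M\|\log(1/\delta),
\]
that is,
\[
\|\nabla\ell_D(\theta^*)\|_{(\Sigma_D+\lambda I)^{-1}}\le\sqrt{\frac{R_D+2\sqrt{-R_D\log\delta}-2\log\delta}{K}}=:\sqrt{\tfrac{\kappa}{K}}.
\]
The delicate points are applying the bound to a rank-$R_D$ matrix and keeping the constants exactly as stated.

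\textbf{Combining.} Adding $\omega\lambda\|Q\Delta\|_2^2$ to both sides of the strong-convexity bound and using Cauchy--Schwarz in the $(\Sigma_D+\lambda I)$ norm gives
\[
\omega\|Q\Delta\|^2_{\Sigma_D+\lambda I}\le \sqrt{\kappa/K}\,\|Q\Delta\|_{\Sigma_D+\lambda I}+\omega\lambda\|Q\Delta\|_2^2 .
\]
Lemma~\ref{lemma:theta-bound} gives $\|Q\Delta\|_2^2\le \bar c^2(N-1)$. Solving the quadratic inequality $x^2\le bx+c$ via $x\le b+\sqrt{b^2+c}$ is enough, with $b=\sqrt{\kappa/(\omega^2K)}$ and $c=\lambda\bar c^2(N-1)$. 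This produces exactly the stated bound; $\omega^2$ appears after dividing through by $\omega$ and slightly loosening the expression.
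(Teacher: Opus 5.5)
Your proposal is essentially the paper's proof. Both use the curvature lower bound obtained from Lemma~\ref{lemma:gradient-bound}, optimality of $\hat\theta^{\rm MLE}$ against $\theta^*$, and Cauchy--Schwarz in the $(\Sigma_D+\lambda\mathbf{I})$-norm. Both then apply Hanson--Wright with ${\rm Tr}(M)\le KR_D$, ${\rm Tr}(M^2)\le K^2R_D$, $\|M\|_{\rm op}\le K$, and finish with a quadratic inequality. The only differences are your $1/K$ normalization and two constant slips, which you should fix.

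\emph{Hessian bound.} No extra factor $\tfrac12$ is needed. For $|t|\le 2\bar c$,
\[
\frac{e^t}{(1+e^t)^2}=\frac{1}{2+e^t+e^{-t}}\ge \frac{1}{2+2e^{2\bar c}}=\omega .
\]
This is important: if you only had $\omega/2$, the stated constants would not follow.

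\emph{Taylor step.} The mean-value remainder is $\tfrac12\Delta^\top\nabla^2\ell_D(\tilde\theta)\Delta\ge\tfrac{\omega}{2}\|Q\Delta\|^2_{\Sigma_D}$, not $\omega\|Q\Delta\|^2_{\Sigma_D}$. Your inequality as written is too strong by a factor $2$; this slip is inherited from Zhu et al. With the correct factor you get $x^2\le 2bx+c$, where $b=\sqrt{\kappa/(\omega^2K)}$ and $c=\lambda\bar c^2(N-1)$. Its exact solution $x\le b+\sqrt{b^2+c}$ is precisely the stated bound. So your final ``loosening'' is not slack: it is exactly what absorbs the lost factor $2$. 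The paper keeps track of this explicitly through the coefficients $\tfrac{\omega K}{2}$ and $\tfrac{2}{\omega K}$.
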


\begin{proof}
We proceed with the proof in three steps: first, we establish a preliminary boundary as shown in \eqref{eq:Delta}; next, we further bound $\left\|\nabla\ell(\theta^*) \right\|_{(\Sigma_D+\lambda\mathbf{I})^{-1}}$ as in \eqref{eq:unknown-term}; and finally, we obtain the results.

\noindent
\underline{Step 1.}
For the log-likelihood function 
\begin{equation*}
\ell(\theta)=
-\sum_{k=1}^K 
{\color{black}
\log\left[1+\exp{\left(-Z_k(Q\theta)^{\top}p^k_{2:N}\right)}\right]},
\end{equation*}
the gradient of $\ell$ with respect to $\theta$ is
\begin{equation}\label{eq:grad-MLE}
\nabla\ell(\theta)=
-\sum_{k=1}^K 
{\color{black}
\left[
\frac{-Z_k}{1+\exp{\left(Z_k(Q\theta)^{\top}p^k_{2:N}\right)}}
\right]Q^{\top}p^k_{2:N}}, 
\end{equation}
and the Hessian matrix of $\ell$ with respect to $\theta$ is
\begin{equation*}
\nabla^2\ell(\theta)=
-\sum_{k=1}^K 
{\color{black}
\left[\frac{\exp{\left(Z_k(Q\theta)^{\top}p^k_{2:N}\right)}}{\left[1+\exp{\left(Z_k(Q\theta)^{\top}p^k_{2:N}\right)}\right]^2}\right]Q^{\top}p^k_{2:N}(p^k_{2:N})^{\top}Q
}.  
\end{equation*} 
Then, by Lemma \ref{lemma:gradient-bound}, we have 
$$
\frac{\exp{\color{black}\left(Z_k(Q\theta)^{\top}p^k_{2:N}\right)}}{\left[1+\exp{\color{black}\left(Z_k(Q\theta)^{\top}p^k_{2:N}\right)}\right]^2}=\frac{1}{2+\exp{\color{black}\left(Z_k(Q\theta)^{\top}p^k_{2:N}\right)}+\exp{\color{black}\left(-Z_k(Q\theta)^{\top}p^k_{2:N}\right)}}\geq\frac{1}{2+2\exp{(2\bar{c})}}:=\omega,
$$ 
thus 
$\nabla^2\ell(\theta) \preceq - \omega \sum_{k=1}^K{\color{black}Q^{\top}p^k_{2:N} ({p^k_{2:N}})^{\top}Q}$. 
Then, it follows that  
\begin{equation*}
    v^{\top} \nabla^2\ell(\theta) v \leq v^{\top}\left(- \omega \sum_{k=1}^K{\color{black}Q^{\top}p^k_{2:N} ({p^k_{2:N}})^{\top}Q}\right)v=-\omega \left\|P ({\color{black}Q} v) \right\|_2^2, \ \forall v\in\mathbb{R}^N.  
\end{equation*}
Let the estimation error be ${\color{black}\tilde{\Delta}}:=\hat{\theta}^{\rm MLE}-\theta^*=\frac{\hat{\alpha}^{\rm MLE}}{\hat{\sigma}_{\rm MLE}}-\frac{\alpha^*}{\sigma^*}\in\mathbb{R}^N$ {\color{black} with $\tilde{\Delta}_1\equiv0$ and let $\Delta:=Q\tilde{\Delta}\in\mathbb{R}^{N-1}$}. 
By the mean value theorem, there exists a $\tilde{\theta}$ between $\hat{\theta}^{\rm MLE}$ and $\theta^*$ such that 
\begin{equation*}
\ell(\theta^*+{\color{black}\tilde{\Delta}})-\ell(\theta^*)-\langle \nabla\ell(\theta^*), {\color{black}\tilde{\Delta}} \rangle=\frac{1}{2}{\color{black}\tilde{\Delta}}^{\top} \nabla^2\ell(\tilde{\theta}) {\color{black}\tilde{\Delta}} \leq -\frac{\omega}{2} \left\|P ({\color{black}Q\tilde{\Delta}}) \right\|_2^2=-\frac{\omega K}{2} \left\|\Delta \right\|_{\Sigma_D}^2.    
\end{equation*}

Since $\hat{\theta}^{\rm MLE}$ is an optimal solution that maximizes the log-likelihood function $\ell(\theta)$, we have $\ell(\theta^*)\leq\ell(\hat{\theta}^{\rm MLE})=\ell(\theta^*+{\color{black}\tilde{\Delta}})$, which implies that 
\begin{equation*}
-\langle \nabla\ell(\theta^*), {\color{black}\tilde{\Delta}} \rangle\leq \ell(\theta^*+{\color{black}\tilde{\Delta}})-\ell (\theta^*)-\langle \nabla\ell(\theta^*), {\color{black}\tilde{\Delta}} \rangle \leq-\frac{\omega K}{2} \left\|\Delta \right\|_{\Sigma_D}^2.   
\end{equation*}
Thus, {\color{black}by $\tilde{\Delta}_1=0$,}
\begin{equation*}
\frac{\omega K}{2} \left\|\Delta \right\|_{\Sigma_D}^2 \leq \left|\langle \nabla\ell(\theta^*), {\color{black}\tilde{\Delta}} \rangle \right|
{\color{black}=\left| \langle Q\nabla\ell(\theta^*), \Delta \rangle\right|}
\leq \left\|{\color{black}Q}\nabla\ell(\theta^*) \right\|_{(\Sigma_D+\lambda\mathbf{I})^{-1}} \left\|\Delta \right\|_{\Sigma_D+\lambda\mathbf{I}}, \quad \forall \lambda>0,   
\end{equation*}
where $\mathbf{I}$ denotes the identity matrix. 
By {\color{black} the fact $\tilde{\Delta}_1=0$ and} Lemma~\ref{lemma:theta-bound}, we have $\|\Delta\|_2
{\color{black}=\|\tilde{\Delta}\|_2}
=\|\hat{\theta}^{\rm MLE}-\theta^*\|_2\leq \bar{c}\sqrt{N-1}$, then 
\begin{align}\label{eq:Delta}
\left\|\Delta \right\|^2_{\Sigma_D+\lambda\mathbf{I}}
&=\left\|\Delta \right\|^2_{\Sigma_D}+\lambda\|\Delta\|_2^2\nonumber\\
&\leq\left\|\Delta \right\|^2_{\Sigma_D}+
\lambda\left(\bar{c}\sqrt{N-1}\right)^2
\nonumber\\
&\leq\frac{2}{\omega K}\left\|{\color{black}Q}\nabla\ell(\theta^*) \right\|_{(\Sigma_D+\lambda\mathbf{I})^{-1}}\left\|\Delta \right\|_{\Sigma_D+\lambda\mathbf{I}}+\lambda\left(\bar{c}\sqrt{N-1}\right)^2. 
\end{align}

\noindent
\underline{Step 2.}
Next, we further bound $\left\|{\color{black}Q}\nabla\ell(\theta^*) \right\|_{(\Sigma_D+\lambda\mathbf{I})^{-1}}$. 
Given the true adjusted utility values $\theta^{*}=\frac{\alpha^{*}}{\sigma^{*}}$, 
we define a random vector $V:=[V_1,\ldots,V_K]^{\top}$ with independent 
components 
\begin{equation*}
V_k=\left\{ 
\begin{aligned}
& \frac{1}{1+\exp{\color{black}(-(Q\theta^*)^{\top}p^k_{2:N})}}
&\quad{\rm w.p.}\quad & 
\mathbb{P}\left(Z_k=-1\right)=\frac{1}{1+\exp{\color{black}((Q\theta^*)^{\top}p^k_{2:N})}}
\\
& \frac{-1}{1+\exp{\color{black}((Q\theta^*)^{\top}p^k_{2:N})}}
&\quad{\rm w.p.}\quad & \mathbb{P}\left(Z_k=1\right)=\frac{1}{1+\exp {\color{black}(-(Q\theta^*)^{\top}p^k_{2:N})}}
\end{aligned}
\right., \quad k=1,\ldots,K. 
\end{equation*}
Notice that $\frac{1}{1+\exp {\color{black}((Q\theta^*)^{\top}p^k_{2:N})}} 
+\frac{1}{1+\exp{\color{black}(-(Q\theta^*)^{\top}p^k_{2:N})}}
=1$ always holds. 
Moreover, $\mathbb{E}[V_k]=0$ and $|V_k|\leq 1$ for all $k=1,\ldots,K$. 
Then, according to \eqref{eq:grad-MLE}, we have $\nabla\ell(\theta^*)=-{\color{black}Q^{\top}}P^{\top}V$ 
and 
\begin{equation*}
\left\|{\color{black}Q}\nabla\ell(\theta^*) \right\|_{(\Sigma_D+\lambda\mathbf{I})^{-1}}^2=({\color{black}Q}\nabla\ell(\theta^*))^{\top}(\Sigma_D+\lambda\mathbf{I})^{-1}({\color{black}Q}\nabla\ell(\theta^*))=V^{\top}P(\Sigma_D+\lambda\mathbf{I})^{-1}P^{\top}V=V^{\top}MV, 
\end{equation*}
where {\color{black} the second equality holds from the fact $QQ^{\top}=\mathbf{I}_{N-1}$}, and     $M:=P(\Sigma_D+\lambda\mathbf{I})^{-1}P^{\top}$.

Let the eigenvalue decomposition of $P^{\top}P$ be $P^{\top}P=U\Lambda U^{\top}$, where $U^{\top}U=\mathbf{I}$ and $\Lambda$ is a diagonal matrix whose diagonal elements are the eigenvalues of $P^{\top}P$. 
Then, $\Sigma_D=\frac{1}{K}P^{\top}P=\frac{1}{K}U\Lambda U^{\top}$. 
We first perform the following simplification: 
\begin{align*}
M &= P\Big(\frac{1}{K}U\Lambda U^{\top}+\lambda UU^{\top}\Big)^{-1}P^{\top}=PU\Big(\frac{\Lambda}{K}+\lambda\mathbf{I}\Big)^{-1}U^{\top}P^{\top},\\
M^2&=\left[PU\Big(\frac{\Lambda}{K}+\lambda\mathbf{I}\Big)^{-1}U^{\top}P^{\top}\right]^2=PU\Big(\frac{\Lambda}{K}+\lambda\mathbf{I}\Big)^{-1}\Lambda\Big(\frac{\Lambda}{K}+\lambda\mathbf{I}\Big)^{-1}U^{\top}P^{\top}. 
\end{align*}
We then bound the trace and the operator norm of the matrix $M$ by the cycle property as: 
\begin{equation*}
\begin{aligned}
{\rm Tr}(M)&={\rm Tr}\left(PU\Big(\frac{\Lambda}{K}+\lambda\mathbf{I}\Big)^{-1}U^{\top}P^{\top}\right)={\rm Tr}\left(\Big(\frac{\Lambda}{K}+\lambda\mathbf{I}\Big)^{-1}U^{\top}P^{\top}PU\right)={\rm Tr}\left(\Big(\frac{\Lambda}{K}+\lambda\mathbf{I}\Big)^{-1}\Lambda\right)\\
&=\sum_{i=1}^{{\color{black}N-1}}\frac{1}{\lambda_i(P^{\top}P)/K+\lambda}\lambda_i(P^{\top}P)
=K\sum_{i=1}^{{\color{black}N-1}}\frac{\lambda_i(P^{\top}P)}{\lambda_i(P^{\top}P)+K\lambda}\leq K\cdot{\rm rank}(P^{\top}P)=KR_D,
\end{aligned}
\end{equation*}
\begin{equation*}
\begin{aligned}
{\rm Tr}(M^2)
&={\rm Tr}\left(PU\Big(\frac{\Lambda}{K}+\lambda\mathbf{I}\Big)^{-1}\Lambda\Big(\frac{\Lambda}{K}+\lambda\mathbf{I}\Big)^{-1}U^{\top}P^{\top}\right)={\rm Tr}\left(\Big(\frac{\Lambda}{K}+\lambda\mathbf{I}\Big)^{-1}\Lambda\Big(\frac{\Lambda}{K}+\lambda\mathbf{I}\Big)^{-1}U^{\top}P^{\top}PU\right)\\
&={\rm Tr}\left(\Big(\frac{\Lambda}{K}+\lambda\mathbf{I}\Big)^{-1}\Lambda\Big(\frac{\Lambda}{K}+\lambda\mathbf{I}\Big)^{-1}\Lambda\right)
=\sum_{i=1}^{{\color{black}N-1}}\left[\frac{1}{\lambda_i(P^{\top}P)/K+\lambda}\lambda_i(P^{\top}P)\right]^2\\
&=K^2\sum_{i=1}^{{\color{black}N-1}}\left[\frac{\lambda_i(P^{\top}P)}{\lambda_i(P^{\top}P)+K\lambda}\right]^2\leq K^2\cdot{\rm rank}(P^{\top}P)=K^2R_D, 
\end{aligned}  
\end{equation*}
where $\lambda_i(P^{\top}P)$ represents the $i$-th eigenvalue of $P^{\top}P$, i.e., the $i$-th entry on the diagonal of $\Lambda$, and the rank 
satisfies $R_D:={\rm rank}(\Sigma_D)={\rm rank}(P^{\top}P)={\rm rank}(P)\leq\min\{K,{\color{black}N-1}\}$.  
We also bound the operator norm $\| \cdot \|_{\rm op}$ of matrix $M$ as 
\begin{align*}
\left\|M\right\|_{\rm op}=\max_{i=1,\ldots,{\color{black}N-1}} \ \lambda_i(M)=\max_{i=1,\ldots,{\color{black}N-1}} \ \frac{1}{\lambda_i(P^{\top}P)/K+\lambda}\lambda_i(P^{\top}P)\leq K.  
\end{align*}

Since $\mathbb{E}[V]=0$ and $|V_k|\leq 1$, Hoeffding’s Lemma implies that each $V_k$ is a sub-Gaussian random variable satisfying $\mathbb{E}[e^{tV_k}]\leq e^{t^2/2}$ for all $t\in\mathbb{R}$, $k=1,\ldots,K$. 
Given the independence of $V_k$ for $k=1,\ldots,K$, for any $t=[t_1,\ldots,t_K]^{\top}\in\mathbb{R}^K$, we have 
\begin{align*}
\mathbb{E}[e^{t^{\top}V}]=\mathbb{E}[e^{\sum_{k=1}^Kt_kV_k}]=\mathbb{E}[e^{t_1V_1}
e^{t_2V_2}\cdots e^{t_KV_K}]\leq e^{t_1^2/2}
e^{t_2^2/2}\cdots e^{t_K^2/2}=e^{\|t\|_2^2/2}. 
\end{align*}
Moreover, it can be easily verified that the matrix $M=P(\Sigma_D+\lambda\mathbf{I})^{-1}P^{\top}$ is positive semi-definite. 
Then, by applying the Hanson–Wright inequality \citep[Theorem 1]{daniel2012tail}, we obtain 
\begin{equation*}
    \mathbb{P}\left[V^{\top}MV > {\rm Tr}(M)+2\sqrt{{\rm Tr}(M^2)t}+2t\|M\|_{\rm op}\right]\leq e^{-t}, \ \forall t>0. 
\end{equation*}
For any $0<\delta<1$, by setting $t=-\log\delta>0$, it follows that, with probability at least $1-\delta$, 
\begin{align}\label{eq:unknown-term}
\left\|{\color{black}Q}\nabla\ell(\theta^*) \right\|_{(\Sigma_D+\lambda\mathbf{I})^{-1}}^2=V^{\top}MV
&\leq {\rm Tr}(M)+2\sqrt{-{\rm Tr}(M^2)\log\delta}-2\log\delta\|M\|_{\rm op}\nonumber\\
&\leq KR_D+2K\sqrt{-R_D\log\delta}-2K\log\delta. 
\end{align}

\noindent
\underline{Step 3.} 
By \eqref{eq:Delta} and \eqref{eq:unknown-term}, we have
\begin{align*}
\left\|\Delta \right\|^2_{\Sigma_D+\lambda\mathbf{I}}
&\leq\frac{2}{\omega K}\left\|{\color{black}Q}\nabla\ell(\theta^*) \right\|_{(\Sigma_D+\lambda\mathbf{I})^{-1}}\left\|\Delta \right\|_{\Sigma_D+\lambda\mathbf{I}}+\lambda\left(\bar{c}\sqrt{N-1}\right)^2\\
&\leq\frac{2}{\omega K}\sqrt{KR_D+2K\sqrt{-R_D\log\delta}-2K\log\delta}\  \left\|\Delta \right\|_{\Sigma_D+\lambda\mathbf{I}}\ +\lambda\left(\bar{c}\sqrt{N-1}\right)^2. 
\end{align*}
Solving the quadratic inequality above yields the following conclusion
\begin{equation*}
\left\|\Delta \right\|_{\Sigma_D+\lambda\mathbf{I}} 
\leq \sqrt{\frac{R_D+2\sqrt{-R_D\log\delta}-2\log\delta}{\omega^2 K}}
+\sqrt{\frac{R_D+2\sqrt{-R_D\log\delta}-2\log\delta}{\omega^2 K}+\lambda\left(\bar{c}\sqrt{N-1}\right)^2}. 
\end{equation*}
{\color{black}Finally, the fact  $\|\tilde{\Delta}\|_{Q^{\top}(\Sigma_D+\lambda \mathbf{I})Q}=
\|Q\tilde{\Delta}\|_{\Sigma_D+\lambda \mathbf{I}}=
\|\Delta\|_{\Sigma_D+\lambda \mathbf{I}}$ implies the desired results. 
}
\end{proof}

\begin{proposition}\label{thm:2-infty}
For any $\lambda>0$ and 
{\color{black}any $\delta\in(0,1)$}, 
with probability at least $1-\delta$, we have 
\begin{align}\label{eq:2-infty}
\left\| \frac{\hat{\alpha}^{\rm MLE}}{\hat{\sigma}_{\rm MLE}}-\frac{\alpha^*}{\sigma^*} \right\|_{\infty}
&\leq
\left\| \frac{\hat{\alpha}^{\rm MLE}}{\hat{\sigma}_{\rm MLE}}-\frac{\alpha^*}{\sigma^*} \right\|_{2} \nonumber\\
&\leq
\frac
{\sqrt{\frac{R_D+2\sqrt{-R_D\log\delta}-2\log\delta}{\omega^2 K}}
+\sqrt{\frac{R_D+2\sqrt{-R_D\log\delta}-2\log\delta}{\omega^2 K}+\lambda\left(\bar{c}\sqrt{N-1}\right)^2}}
{\sqrt{\lambda_{\rm min}\left(\Sigma_D+\lambda\mathbf{I}\right)}},
\end{align}
where $\lambda_{\rm min}\left(\Sigma_D+\lambda\mathbf{I}\right)$ denotes the minimum eigenvalue of the matrix $\Sigma_D+\lambda\mathbf{I}$. 
We refer to $\left\| \frac{\hat{\alpha}^{\rm MLE}}{\hat{\sigma}_{\rm MLE}}-\frac{\alpha^*}{\sigma^*} \right\|_{2}$ as the $\ell_{2}$ statistical error and 
$\left\| \frac{\hat{\alpha}^{\rm MLE}}{\hat{\sigma}_{\rm MLE}}-\frac{\alpha^*}{\sigma^*} \right\|_{\infty}$ as the $\ell_{\infty}$ statistical error. 
\end{proposition}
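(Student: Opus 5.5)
This follows directly from Theorem~\ref{thm:alpha-sigma-matrix}; the only work is converting the weighted norm into an unweighted one. Write $\tilde{\Delta}:=\hat{\theta}^{\rm MLE}-\theta^*=\frac{\hat{\alpha}^{\rm MLE}}{\hat{\sigma}_{\rm MLE}}-\frac{\alpha^*}{\sigma^*}$ and $\Delta:=Q\tilde{\Delta}\in\mathbb{R}^{N-1}$, as in the proof of Theorem~\ref{thm:alpha-sigma-matrix}.

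The first inequality $\|\tilde{\Delta}\|_\infty\le\|\tilde{\Delta}\|_2$ is the elementary norm comparison on $\mathbb{R}^N$ and needs no probabilistic argument.

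For the second inequality, I use that $\hat{\alpha}^{\rm MLE}_1=\alpha^*_1=0$. Indeed, both $\hat{\theta}^{\rm MLE}$ and $\theta^*$ lie in $\Theta$ by Lemma~\ref{lemma:theta-bound}, so $\tilde{\Delta}_1=0$. Hence $\|\tilde{\Delta}\|_2=\|\Delta\|_2$, and
\[
\|\tilde{\Delta}\|_{Q^{\top}(\Sigma_D+\lambda\mathbf{I})Q}=\|\Delta\|_{\Sigma_D+\lambda\mathbf{I}}.
\]
Next I apply the Rayleigh-quotient bound for the symmetric positive definite matrix $\Sigma_D+\lambda\mathbf{I}$. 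Since $\Sigma_D\succeq 0$ and $\lambda>0$, we have $\lambda_{\min}(\Sigma_D+\lambda\mathbf{I})\ge\lambda>0$. Therefore
\[
\|\Delta\|^2_{\Sigma_D+\lambda\mathbf{I}}=\Delta^\top(\Sigma_D+\lambda\mathbf{I})\Delta\ge\lambda_{\min}(\Sigma_D+\lambda\mathbf{I})\,\|\Delta\|_2^2,
\]
which gives
\[
\|\tilde{\Delta}\|_2\le \frac{\|\tilde{\Delta}\|_{Q^{\top}(\Sigma_D+\lambda\mathbf{I})Q}}{\sqrt{\lambda_{\min}(\Sigma_D+\lambda\mathbf{I})}}.
\]

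Finally, on the event of probability at least $1-\delta$ from Theorem~\ref{thm:alpha-sigma-matrix}, the numerator is bounded by the right-hand side stated there. Substituting that bound yields \eqref{eq:2-infty}, with the same probability $1-\delta$ for every $\lambda>0$ and $\delta\in(0,1)$, since the event is the one already guaranteed by the theorem.

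I do not expect a real obstacle. The one point to state carefully is that the minimal eigenvalue is that of the reduced $(N-1)\times(N-1)$ matrix rather than of $Q^\top(\Sigma_D+\lambda\mathbf{I})Q$, which is singular. Working with the reduced vector $\Delta$, using $\tilde{\Delta}_1=0$, handles this.
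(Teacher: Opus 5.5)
Your proposal is correct and follows the paper's proof essentially step for step. Both arguments use $\tilde{\Delta}_1=0$ to pass to the reduced vector $Q\tilde{\Delta}$, apply the Rayleigh-quotient bound with $\lambda_{\min}(\Sigma_D+\lambda\mathbf{I})\ge\lambda>0$, invoke Theorem~\ref{thm:alpha-sigma-matrix} on its probability-$(1-\delta)$ event, and conclude with $\|\cdot\|_\infty\le\|\cdot\|_2$; your remark that the eigenvalue must be that of the reduced $(N-1)\times(N-1)$ matrix, not the singular $Q^{\top}(\Sigma_D+\lambda\mathbf{I})Q$, is exactly what that reduction handles.
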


\begin{proof}
Firstly, by the Rayleigh quotient inequality, we have for any $x\in\mathbb{R}^{{\color{black}N-1}}$ that 
\begin{equation*}
\lambda_{\min}\left(\Sigma_D+\lambda \mathbf{I}\right)\|x\|_2^2\leq x^{\top}\left(\Sigma_D+\lambda \mathbf{I}\right)x. 
\end{equation*} 
As $\Sigma_D=\frac{1}{K}P^{\top}P$, 
$\Sigma_D+\lambda\mathbf{I}$ is a symmetric positive definite matrix, with  $\lambda_{\rm min}\left(\Sigma_D+\lambda\mathbf{I}\right)\geq\lambda>0$. 
By Theorem~\ref{thm:alpha-sigma-matrix}, we have
\begin{equation*}
\begin{aligned}
\left\| \frac{\hat{\alpha}^{\rm MLE}}{\hat{\sigma}_{\rm MLE}}-\frac{\alpha^*}{\sigma^*} \right\|_{2}
&{\color{black}=\left\|Q \left(\frac{\hat{\alpha}^{\rm MLE}}{\hat{\sigma}_{\rm MLE}}-\frac{\alpha^*}{\sigma^*}\right) \right\|_{2}}\\
&\leq \frac{1}{\sqrt{\lambda_{\rm min}\left(\Sigma_D+\lambda\mathbf{I}\right)}}
\left\| {\color{black}Q}\left(\frac{\hat{\alpha}^{\rm MLE}}{\hat{\sigma}_{\rm MLE}}-\frac{\alpha^*}{\sigma^*} \right)\right\|_{\Sigma_D+\lambda\mathbf{I}}\\
&{\color{black}=\frac{1}{\sqrt{\lambda_{\rm min}\left(\Sigma_D+\lambda\mathbf{I}\right)}}\left\| \frac{\hat{\alpha}^{\rm MLE}}{\hat{\sigma}_{\rm MLE}}-\frac{\alpha^*}{\sigma^*} \right\|_{Q^{\top}(\Sigma_D+\lambda\mathbf{I})Q}}\\
&\leq\frac{\sqrt{\frac{R_D+2\sqrt{-R_D\log\delta}-2\log\delta}{\omega^2 K}}
+\sqrt{\frac{R_D+2\sqrt{-R_D\log\delta}-2\log\delta}{\omega^2 K}+\lambda\left(\bar{c}\sqrt{N-1}\right)^2}}
{\sqrt{\lambda_{\rm min}\left(\Sigma_D+\lambda\mathbf{I}\right)}},  
\end{aligned} 
\end{equation*}
{\color{black}where the first equality holds 
as 
$\hat{\alpha}^{\rm MLE}_1=\alpha^*_1=0$. } 

Secondly, since $\left\| \frac{\hat{\alpha}^{\rm MLE}}{\hat{\sigma}_{\rm MLE}}-\frac{\alpha^*}{\sigma^*} \right\|_{\infty}\leq\left\| \frac{\hat{\alpha}^{\rm MLE}}{\hat{\sigma}_{\rm MLE}}-\frac{\alpha^*}{\sigma^*} \right\|_{2}$, we obtain the desired result. 
\end{proof}

Theorem \ref{thm:alpha-sigma-matrix} and Proposition \ref{thm:2-infty} not only provide convergence guarantees for the MLE 
{\color{black}estimator of the}
{\color{black}VNM utility model with response error,}
but also provide quantitative error bounds that can guide the design of 
{\color{black}the lottery pairs}
in the pairwise comparison dataset. 
We 
{\color{black}next}
consider several specific cases 
{\color{black}corresponding to different dataset structures}.

\begin{assumption}\label{assump:i-ii}
We consider 
{\color{black}the following two
cases  
regarding 
the informativeness (or redundancy) of the pairwise comparison lotteries
in the dataset $D$:}

\begin{itemize}

\item[(i)] 
The pairs of lotteries in the dataset $D$ are well-designed such that $\lambda_{\rm min}(\Sigma_D) = \bar{\lambda} > 0$. 
In this case, $\Sigma_D$ is invertible 
{\color{black}and}
$R_D={\rm rank}
(\Sigma_D) = {\color{black}N-1}$.
{\color{black}Consequently,}
the 
{\color{black}regularization}
parameter $\lambda$ in \eqref{eq:2-infty} can be set to $0$ 
{\color{black}and}
$\lambda_{\rm min}(\Sigma_D+\lambda\mathbf{I})=\bar{\lambda}$. 

\item[(ii)]  
The pairs of lotteries in the dataset $D$ are not well-designed 
{\color{black}so}
that $\lambda_{\rm min}(\Sigma_D) = 0$. 
In this case, $\Sigma_D$ is not invertible 
{\color{black}and}
$R_D={\rm rank}(\Sigma_D)< {\color{black}N-1}$.
{\color{black}Consequently,}
the 
{\color{black}regularization}
parameter $\lambda$ in \eqref{eq:2-infty} cannot be set to $0$. 
{\color{black}For any}
$\lambda>0$, we have $\lambda_{\rm min}(\Sigma_D+\lambda\mathbf{I})=\lambda$. 

\end{itemize}
\end{assumption}

{\color{black}
Assumption~\ref{assump:i-ii} concerns the rank of the empirical information matrix
$\Sigma_D$, which reflects the informativeness of the observed pairwise comparison dataset $D$.
In the following Corollaries~\ref{coll:bound-param} and~\ref{coll:bound-utility}, we show that
Assumption~\ref{assump:i-ii} provides sufficient conditions for establishing
finite-sample convergence rates of the MLE. }

\begin{corollary}\label{coll:bound-param}

\begin{itemize}

\item[(a)]
{\color{black}Under 
Assumption~\ref{assump:i-ii}(i)}, for any 
{\color{black}$\delta\in(0,1)$},
with probability at least $1-\delta$, we have 
\begin{equation*}
\begin{aligned}
\left\| \frac{\hat{\alpha}^{\rm MLE}}{\hat{\sigma}_{\rm MLE}}-\frac{\alpha^*}{\sigma^*} \right\|_{\infty}
\leq
\left\| \frac{\hat{\alpha}^{\rm MLE}}{\hat{\sigma}_{\rm MLE}}-\frac{\alpha^*}{\sigma^*} \right\|_{2}
\leq
\frac{2}{\sqrt{\bar{\lambda}}}\sqrt{\frac{{\color{black}N-1}+2\sqrt{-{\color{black}(N-1)}\log\delta}-2\log\delta}{\omega^2 K}}
={\color{black}O\left(\sqrt{\frac{N}{K}}\right)}.
\end{aligned}
\end{equation*}

{\color{black}Furthermore, 
(a.1)
if the cardinality $N$ of the overall support set induced by all lotteries
is 
fixed 
(with $K$ increasing),
then the estimation error 
with respect to the utility values at the 
breakpoints 
converges to 0 at the rate $O\left(\frac{1}{\sqrt{K}}\right)$; 
(a.2) if 
$N$ grows
asymptotically slower than $K$, i.e., $N=o(K)$, then the estimation error converges to 0 as $K\rightarrow\infty$.
}

\item[(b)] 
{\color{black}Under 
Assumption~\ref{assump:i-ii}(ii),} 
for any 
{\color{black}$\lambda>0$ and any $\delta\in(0,1)$},
with probability at least $1-\delta$, we have 
\begin{equation*}
\begin{aligned}
\left\| \frac{\hat{\alpha}^{\rm MLE}}{\hat{\sigma}_{\rm MLE}}-\frac{\alpha^*}{\sigma^*} \right\|_{\infty}
\leq
\left\| \frac{\hat{\alpha}^{\rm MLE}}{\hat{\sigma}_{\rm MLE}}-\frac{\alpha^*}{\sigma^*} \right\|_{2}
& 
\leq 
\frac{2}{\sqrt{\lambda}}
\sqrt{\frac{{\color{black}N-1}+2\sqrt{-{\color{black}(N-1)}\log\delta}-2\log\delta}{\omega^2 K}
+\lambda\left(\bar{c}\sqrt{N-1}\right)^2}\\
&=
{\color{black}O\left(\sqrt{\frac{N}{K}}\right)}
+O\left(\bar{c}\sqrt{N}\right).
\end{aligned}
\end{equation*}
{\color{black}In this case, the estimation error may fail to converge to zero as $K\rightarrow \infty$, since the pairwise comparison queries may be repeated or linearly dependent. 
}


\end{itemize}

\end{corollary}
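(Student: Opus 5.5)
Both parts follow by specializing Proposition~\ref{thm:2-infty} under the two cases of Assumption~\ref{assump:i-ii}. We then read off the orders in $N$ and $K$. Throughout, write $T_\delta(R):=R+2\sqrt{-R\log\delta}-2\log\delta$. Note that $T_\delta$ is nondecreasing in $R$ and $R_D\le N-1$, so $T_\delta(R_D)\le T_\delta(N-1)$. This lets us replace $R_D$ by $N-1$ in the bound \eqref{eq:2-infty} in both cases.

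For part (a), Assumption~\ref{assump:i-ii}(i) gives $\lambda_{\min}(\Sigma_D)=\bar\lambda>0$ and $R_D=N-1$. The bound in Proposition~\ref{thm:2-infty} is stated for $\lambda>0$. Rather than redoing the proof with $\lambda=0$, I would let $\lambda\downarrow 0$. For every $\lambda>0$, the right-hand side of \eqref{eq:2-infty} is at most
\[
\frac{\sqrt{T_\delta(N-1)/(\omega^2K)}+\sqrt{T_\delta(N-1)/(\omega^2K)+\lambda\bar c^2(N-1)}}{\sqrt{\bar\lambda+\lambda}},
\]
since $\lambda_{\min}(\Sigma_D+\lambda\mathbf I)=\bar\lambda+\lambda$. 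The event in Proposition~\ref{thm:2-infty} depends on $\lambda$, which needs care. It is cleaner to go back to the Hanson--Wright step in Theorem~\ref{thm:alpha-sigma-matrix}. There the high-probability event bounds $V^\top MV$ by $K\,T_\delta(R_D)$ uniformly in $\lambda\ge0$, because $\operatorname{Tr}(M)$, $\operatorname{Tr}(M^2)$ and $\|M\|_{\rm op}$ are controlled uniformly in $\lambda$. Alternatively, since $\Sigma_D$ is invertible, one can run the proof directly with $\lambda=0$, using $M=P\Sigma_D^{-1}P^\top$, which is a projection scaled by $K$. On this single event, taking $\lambda\to0$ (or setting $\lambda=0$) gives $\|\Delta\|_2\le \frac{2}{\sqrt{\bar\lambda}}\sqrt{T_\delta(N-1)/(\omega^2K)}$. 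The $\ell_\infty$ inequality is just $\|\cdot\|_\infty\le\|\cdot\|_2$.

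For the rate, $T_\delta(N-1)\le (N-1)+(N-1)-\log\delta-2\log\delta$ by AM--GM, so $T_\delta(N-1)=O(N)$ for fixed $\delta$. Also $\omega$ depends only on $\bar c$. Treating $\bar\lambda$ as a fixed design constant, this yields $O(\sqrt{N/K})$. Then (a.1) is immediate with $N$ fixed. For (a.2), $N=o(K)$ gives $N/K\to0$, provided $\bar\lambda$ stays bounded away from zero along the sequence of designs; I would state that as the implicit interpretation of Assumption~\ref{assump:i-ii}(i).

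For part (b), $\lambda_{\min}(\Sigma_D+\lambda\mathbf I)=\lambda$. Using $T_\delta(R_D)\le T_\delta(N-1)$ and $\sqrt a+\sqrt{a+b}\le 2\sqrt{a+b}$, the bound \eqref{eq:2-infty} becomes $\frac{2}{\sqrt\lambda}\sqrt{T_\delta(N-1)/(\omega^2K)+\lambda\bar c^2(N-1)}$, as claimed. Then $\sqrt{x+y}\le\sqrt x+\sqrt y$ splits it into $\frac{2}{\sqrt\lambda}\sqrt{T_\delta(N-1)/(\omega^2K)}=O(\sqrt{N/K})$ for fixed $\lambda$, plus $2\bar c\sqrt{N-1}=O(\bar c\sqrt N)$. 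The second term does not vanish as $K$ grows, which justifies the non-convergence remark.

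The only delicate point is the $\lambda=0$ case in (a), since the $\delta$-event must not depend on $\lambda$. I expect the direct $\lambda=0$ rerun, with $\Sigma_D$ invertible, to be the simplest fix.
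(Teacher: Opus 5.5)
\paragraph{Verdict.} Your route is the same as the paper's: the corollary follows by specializing Proposition~\ref{thm:2-infty}. In case (i) set $\lambda=0$, $R_D=N-1$ and $\lambda_{\min}(\Sigma_D)=\bar\lambda$. In case (ii) use $R_D\le N-1$, $\lambda_{\min}(\Sigma_D+\lambda\mathbf{I})=\lambda$ and $\sqrt{a}+\sqrt{a+b}\le 2\sqrt{a+b}$. Your two displayed inequalities and all of part (b) are correct. The real problem is (a.2), discussed in the next paragraph.

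\paragraph{The $\lambda=0$ step.} Your preferred fix is correct: rerun Theorem~\ref{thm:alpha-sigma-matrix} with $M=KP(P^\top P)^{-1}P^\top$, which is $K$ times a rank-$(N-1)$ orthogonal projection. This is exactly what justifies the paper's remark that $\lambda$ ``can be set to $0$''. Your alternative argument needs one correction. Uniform-in-$\lambda$ bounds on $\mathrm{Tr}(M)$, $\mathrm{Tr}(M^2)$ and $\|M\|_{\rm op}$ do not by themselves give a single event. What does give one is the Loewner bound $M_\lambda\preceq K\Pi_{\mathrm{col}(P)}$ for all $\lambda>0$: apply Hanson--Wright once to the right-hand side. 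A simpler option: $\|\Delta\|_2$ does not depend on $\lambda$ and the bound is continuous in $\lambda$, so continuity of probability from above carries the $1-\delta$ guarantee to the limit $\lambda\downarrow 0$.

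\paragraph{The gap in (a.2).} The same issue affects any reading of the rate $O(\sqrt{N/K})$ in (a) when $N$ varies. Your argument needs $\bar\lambda$ to stay bounded away from zero, and this is impossible once $N\to\infty$.
\begin{itemize}
\item Each $p^k_{2:N}$ has entries in $[-1,1]$ and $\ell_1$-norm at most $2$. Hence $\|p^k_{2:N}\|_2^2\le 2$ and $\mathrm{Tr}(\Sigma_D)=\frac{1}{K}\sum_k\|p^k_{2:N}\|_2^2\le 2$, which forces $\bar\lambda\le 2/(N-1)$.
\item With your $T_\delta$, the bound in (a) therefore satisfies
\[
\frac{2}{\sqrt{\bar\lambda}}\sqrt{\frac{T_\delta(N-1)}{\omega^2K}}\ \ge\ \frac{\sqrt{2}\,(N-1)}{\omega\sqrt{K}} .
\]
\item So the bound can only vanish if $N^2=o(K)$. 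For instance, $N\asymp K^{3/4}$ satisfies $N=o(K)$, yet the bound diverges like $K^{1/4}$.
\end{itemize}
What this route actually proves is the rate $O\big(\sqrt{N/(\bar\lambda K)}\big)$, which tends to zero iff $N/(\bar\lambda K)\to 0$. Even for the best-conditioned designs ($\bar\lambda\asymp 1/N$), this requires $N=o(\sqrt{K})$.

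The defect is inherited from the statement, whose $O(\sqrt{N/K})$ silently treats $\bar\lambda$ as a constant. Your explicit proviso simply makes visible that this cannot hold. To repair (a.2), either keep $\bar\lambda$ in the rate, or strengthen $N=o(K)$ to $N^2=o(K)$ together with $N\bar\lambda$ bounded below. Part (a.1) is unaffected: for fixed $N$, a $K$-independent lower bound on $\bar\lambda$ is a consistent assumption.
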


Finally, with the monotonicity, concavity, and Lipschitz continuity properties of the VNM utility function in $\mathcal{U}_c$, 
we quantify the error between the MLE-based adjusted VNM utility function and the true adjusted VNM utility function, 
{\color{black}measured in the}
Kolmogorov distance.

\begin{theorem}\label{thm:bound-utility}
For any $\lambda>0$ and 
{\color{black}any $\delta\in(0,1)$}, 
with probability at least $1-\delta$, we have 
\begin{equation*}
\dd_K\left(\frac{\hat{u}_{\rm MLE}}{\hat{\sigma}_{\rm MLE}},\frac{u^*}{\sigma^*}\right)\leq  
L \bar{c} \mu_N+
\frac
{\sqrt{\frac{R_D+2\sqrt{-R_D\log\delta}-2\log\delta}{\omega^2 K}}
+\sqrt{\frac{R_D+2\sqrt{-R_D\log\delta}-2\log\delta}{\omega^2 K}+\lambda\left(\bar{c}\sqrt{N-1}\right)^2}}
{\sqrt{\lambda_{\rm min}\left(\Sigma_D+\lambda\mathbf{I}\right)}},
\end{equation*}
where 
$\dd_K(u,\tilde{u}):=\sup\limits_{y\in[y_1,y_N]}\left|u(y)-\tilde{u}(y) \right|$ is the Kolmogorov distance between any two utility functions $u,\tilde{u}\in\mathcal{U}_c$, $L$ is the Lipschitz modulus defined in \eqref{eq:U_c}, and $\mu_N:=\max_{j=2,\ldots,N} (y_{j}-y_{j-1})$. 

\end{theorem}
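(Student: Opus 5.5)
The plan is a triangle inequality through the piecewise-linear lower approximation of the true utility. Let $\mathcal{T}u^*\in\mathcal{U}_N$ be the PLA of $u^*$ from Definition~\ref{def:PLU}. For every $y\in[y_1,y_N]$ we split
\[
\left|\frac{\hat u_{\rm MLE}(y)}{\hat\sigma_{\rm MLE}}-\frac{u^*(y)}{\sigma^*}\right|\le \left|\frac{\hat u_{\rm MLE}(y)}{\hat\sigma_{\rm MLE}}-\frac{\mathcal{T}u^*(y)}{\sigma^*}\right|+\frac{1}{\sigma^*}\left|\mathcal{T}u^*(y)-u^*(y)\right|.
\]
Taking the supremum over $y$ then bounds $\dd_K$ by a statistical term plus an approximation term. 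We control the first term with Proposition~\ref{thm:2-infty} and the second with the Lipschitz property.

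\textbf{Statistical term.} Both $\hat u_{\rm MLE}/\hat\sigma_{\rm MLE}$ and $\mathcal{T}u^*/\sigma^*$ are piecewise linear with the same breakpoints $\mathbb{Y}$ (the first by \eqref{eq:N-piecewise utility function}). Their difference is therefore piecewise linear on each $[y_j,y_{j+1}]$. Since $\mathcal{T}u^*(y_j)=u^*(y_j)=\alpha^*_j$, the difference at each breakpoint equals $\hat\alpha^{\rm MLE}_j/\hat\sigma_{\rm MLE}-\alpha^*_j/\sigma^*$. A linear function on an interval attains its maximum absolute value at an endpoint. Hence the supremum over $[y_1,y_N]$ equals
\[
\left\|\frac{\hat\alpha^{\rm MLE}}{\hat\sigma_{\rm MLE}}-\frac{\alpha^*}{\sigma^*}\right\|_\infty .
\]
On the event of probability at least $1-\delta$ from Proposition~\ref{thm:2-infty}, this is bounded by exactly the fraction appearing in the statement.

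\textbf{Approximation term.} Fix $y\in[y_j,y_{j+1}]$. Both $u^*$ and $\mathcal{T}u^*$ are $L$-Lipschitz (the latter because its slopes are chord slopes of $u^*$), and they agree at $y_j$. Hence
\[
|u^*(y)-\mathcal{T}u^*(y)|\le |u^*(y)-u^*(y_j)|+|\mathcal{T}u^*(y_j)-\mathcal{T}u^*(y)|\le 2L(y-y_j).
\]
This crude estimate gives $2L\mu_N$, which is a factor of $2$ too large. To obtain $L\mu_N$ we use the agreement at both endpoints. Bound the gap from $y_j$ when $y$ lies in the left half of the interval and from $y_{j+1}$ when it lies in the right half; this gives $2L\cdot\mu_N/2=L\mu_N$. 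Alternatively, use $0\le u^*-\mathcal{T}u^*$ (concavity) together with $u^*(y)\le u^*(y_j)+L(y-y_j)$ and $\mathcal{T}u^*(y)\ge u^*(y_j)$ (monotonicity), which gives $u^*(y)-\mathcal{T}u^*(y)\le L(y-y_j)\le L\mu_N$ directly. This monotonicity/concavity route is the one we commit to. Finally, multiply by $1/\sigma^*\le\bar c$ from Assumption~\ref{assump:VNM} to obtain $L\bar c\mu_N$.

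\textbf{Combining and the main obstacle.} Adding the two bounds on the same probability event gives the theorem. The only delicate step is the first one: the reduction of the sup over $[y_1,y_N]$ to the $\ell_\infty$ norm on breakpoints. It requires checking that the endpoint convention of $\hat u_{\rm MLE}$ at $y_N$ is consistent, i.e. $\hat\alpha^{\rm MLE}_N=\bar\alpha^*_N/\gamma^*=1$, which holds by \eqref{const:MLE-alpha}. It also requires the standing assumption $\gamma^*>0$, so that $\hat\sigma_{\rm MLE}$ is finite.
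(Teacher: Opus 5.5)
Your argument is correct, but it is organized differently from the paper's. The paper never introduces $\mathcal{T}u^*$. On each cell $[y_{j-1},y_j]$ it brackets both $\hat u_{\rm MLE}/\hat\sigma_{\rm MLE}$ and $u^*/\sigma^*$ between their endpoint values, using only monotonicity. This bounds the pointwise gap by the larger of the two ``cross'' differences $|\hat u_{\rm MLE}(y_{j-1})/\hat\sigma_{\rm MLE}-u^*(y_j)/\sigma^*|$ and $|\hat u_{\rm MLE}(y_j)/\hat\sigma_{\rm MLE}-u^*(y_{j-1})/\sigma^*|$. Inserting the breakpoint bound $C_0$ from Proposition~\ref{thm:2-infty} gives $\max_j(\hat\alpha^{\rm MLE}_j-\hat\alpha^{\rm MLE}_{j-1})/\hat\sigma_{\rm MLE}+C_0$. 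The paper then controls this increment with the Lipschitz property of the \emph{estimator} and $1/\hat\sigma_{\rm MLE}=\gamma^*\le\bar c$.

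You instead put the Lipschitz, concavity and monotonicity burden on the \emph{true} function and use $1/\sigma^*\le\bar c$ from Assumption~\ref{assump:VNM}. Your statistical term is then handled exactly by the piecewise-affine structure that $\hat u_{\rm MLE}$ and $\mathcal{T}u^*$ share on the same cells.

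Each route buys something different.
\begin{itemize}
\item The paper's bracketing never uses that $\hat u_{\rm MLE}$ is piecewise linear, so it applies verbatim to every maximizer in $\mathcal{U}_c^*$.
\item Your decomposition isolates the approximation error $\sup_y|u^*(y)-\mathcal{T}u^*(y)|/\sigma^*$ as a separate term. This is what actually substantiates the remark after the theorem that this term vanishes when $u^*$ is piecewise linear with breakpoints in $\mathbb{Y}$. The paper's intermediate term $\max_j(\hat\alpha^{\rm MLE}_j-\hat\alpha^{\rm MLE}_{j-1})/\hat\sigma_{\rm MLE}$ is always positive.
\item Your route also sharpens easily. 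The mirror estimate $u^*(y)-\mathcal{T}u^*(y)\le L(y_{j+1}-y)$ follows from $u^*(y)\le u^*(y_{j+1})$ and the slopes of $\mathcal{T}u^*$ being at most $L$. Combined with your estimate $u^*(y)-\mathcal{T}u^*(y)\le L(y-y_j)$, it gives $\tfrac12 L\bar c\mu_N$ in place of $L\bar c\mu_N$.
\end{itemize}
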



\begin{proof}
{
Denote $C_0:=\frac
{\sqrt{\frac{R_D+2\sqrt{-R_D\log\delta}-2\log\delta}{\omega^2 K}}
+\sqrt{\frac{R_D+2\sqrt{-R_D\log\delta}-2\log\delta}{\omega^2 K}+\lambda\left(\bar{c}\sqrt{N-1}\right)^2}}
{\sqrt{\lambda_{\rm min}\left(\Sigma_D+\lambda\mathbf{I}\right)}}$.  
By Proposition~\ref{thm:2-infty}, we have $|\frac{\hat{\alpha}^{\rm MLE}_j}{\hat{\sigma}_{\rm MLE}}-\frac{\alpha^*_j}{\sigma^*} |\leq C_0$, $j=1,\ldots,N$.
, i.e., 
\begin{equation*}
\frac{\hat{\alpha}^{\rm MLE}_j}{\hat{\sigma}_{\rm MLE}}-C_0 \leq \frac{u^*(y_j)}{\sigma^*}\leq\frac{\hat{\alpha}^{\rm MLE}_j}{\hat{\sigma}_{\rm MLE}}+C_0,\ j=1,\ldots,N. 
\end{equation*} 
By the monotonicity and the Lipschitz continuity of both $\hat{u}_{\rm MLE}$ and $u^*$, 
we have 
\begin{equation*}
\begin{aligned}
\dd_K\left(\frac{\hat{u}_{\rm MLE}}{\hat{\sigma}_{\rm MLE}},\frac{u^*}{\sigma^*}\right)
=&\sup_{j=2,\ldots,N}\ \sup_{y\in[y_{j-1},y_j]} \left|\frac{\hat{u}_{\rm MLE}(y)}{\hat{\sigma}_{\rm MLE}}-\frac{u^*(y)}{\sigma^*} \right|\\
\leq& \sup_{j=2,\ldots,N}\ \max\left\{\left|\frac{\hat{u}_{\rm MLE}(y_{j-1})}{\hat{\sigma}_{\rm MLE}}-\frac{u^*(y_{j})}{\sigma^*} \right|, \left|\frac{\hat{u}_{\rm MLE}(y_j)}{\hat{\sigma}_{\rm MLE}}-\frac{u^*(y_{j-1})}{\sigma^*} \right|\right\}\\
\leq& \sup_{j=2,\ldots,N}\ \max\left\{ 
\left|\frac{\hat{\alpha}^{\rm MLE}_{j-1}}{\hat{\sigma}_{\rm MLE}}-\left(\frac{\hat{\alpha}^{\rm MLE}_j}{\hat{\sigma}_{\rm MLE}}+C_0\right)\right|, 
\left|\frac{\hat{\alpha}^{\rm MLE}_{j-1}}{\hat{\sigma}_{\rm MLE}}-\left(\frac{\hat{\alpha}^{\rm MLE}_j}{\hat{\sigma}_{\rm MLE}}-C_0\right)\right|
\right.,\\
&\left.
\left|\frac{\hat{\alpha}^{\rm MLE}_{j}}{\hat{\sigma}_{\rm MLE}}-\left(\frac{\hat{\alpha}^{\rm MLE}_{j-1}}{\hat{\sigma}_{\rm MLE}}+C_0 \right)\right|,
\left|\frac{\hat{\alpha}^{\rm MLE}_{j}}{\hat{\sigma}_{\rm MLE}}-\left(\frac{\hat{\alpha}^{\rm MLE}_{j-1}}{\hat{\sigma}_{\rm MLE}}-C_0 \right)\right|
\right\}\\
=&\max_{j=2,\ldots,N} \left(\frac{\hat{\alpha}^{\rm MLE}_{j}}{\hat{\sigma}_{\rm MLE}}-\frac{\hat{\alpha}^{\rm MLE}_{j-1}}{\hat{\sigma}_{\rm MLE}}\right)+C_0\\ 
\leq& \max_{j=2,\ldots,N} \frac{L(y_{j}-y_{j-1})}{\hat{\sigma}_{\rm MLE}}+C_0\leq  L\bar{c}\left[\max_{j=2,\ldots,N}(y_{j}-y_{j-1})\right]+C_0, 
\end{aligned}
\end{equation*}
which gives the desired conclusion. 
}
\end{proof}

Note that the error bound in Theorem~\ref{thm:bound-utility} 
{\color{black}consists of}
two terms. The first term bounds the difference between a general VNM utility function and its piecewise linear lower 
approximation 
under the metric $\dd_K$.
The second term quantifies the statistical  
{\color{black}estimation}
error incurred by the MLE procedure in obtaining $(\hat{u}_{\rm MLE},\hat{\sigma}_{\rm MLE})$, as analyzed in Theorem~\ref{thm:alpha-sigma-matrix} 
{\color{black}and Proposition~\ref{thm:2-infty}}. 
The first term 
{\color{black}vanishes}
when the DM's true VNM utility function is piecewise linear 
and its breakpoints align with the support set $\mathbb{Y}$ 
{\color{black}induced by}
the paired lotteries. 
The following corollary then establishes  
convergence rates under 
{\color{black}different conditions on 
the informativeness (or redundancy)
of the paired lotteries.
}



\begin{corollary}\label{coll:bound-utility}

\begin{itemize}

\item [(a)]
{\color{black}Under 
Assumption~\ref{assump:i-ii}(i)}, for any 
{\color{black}$\delta\in(0,1)$},
with probability at least $1-\delta$, we have 
\begin{equation*}
\dd_K\left(\frac{\hat{u}_{\rm MLE}}{\hat{\sigma}_{\rm MLE}},\frac{u^*}{\sigma^*}\right)\leq 
L \bar{c} \mu_N+
\frac{2}{\sqrt{\bar{\lambda}}} 
\sqrt{\frac{{\color{black}N-1}+2\sqrt{-{\color{black}(N-1)}\log\delta}-2\log\delta}{\omega^2 K}}.  
\end{equation*}
Moreover, 
if the 
{\color{black}support set $\mathbb{Y}$ of the lotteries is sufficiently dense such that}
$\mu_N=\max\limits_{j=2,\ldots,N} \ (y_{j}-y_{j-1})= O\left(\frac{1}{N}\right)
$, then for any 
{\color{black}$\delta\in(0,1)$},
with probability at least $1-\delta$, 
{\color{black}it follows that}
\begin{equation*}
\dd_K\left(\frac{\hat{u}_{\rm MLE}}{\hat{\sigma}_{\rm MLE}},\frac{u^*}{\sigma^*}\right) =
{\color{black}O\left(\frac{1}{N}\right)+O\left(\sqrt{\frac{N}{K}}\right).}
\end{equation*}
{\color{black}
Furthermore, if $N=o(K)$, then the 
estimation error of the utility function converges to 0 as both $K,N\rightarrow\infty$.
}

\item[(b)]  
{\color{black}Under 
Assumption~\ref{assump:i-ii}(ii) and $\mu_N= O\left(\frac{1}{N}\right)
$,}
for any 
{\color{black}$\delta\in(0,1)$},
with probability at least $1-\delta$, we have  
\begin{equation*}
\dd_K\left(\frac{\hat{u}_{\rm MLE}}{\hat{\sigma}_{\rm MLE}},\frac{u^*}{\sigma^*}\right) 
=
{\color{black}O\left(
{\color{black}\frac{1}{N}}
\right)+O\left(\sqrt{\frac{N}{K}}\right)}
+ O\left(\bar{c}\sqrt{N}\right). 
\end{equation*}

\end{itemize}

\end{corollary}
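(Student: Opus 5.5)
The plan is to specialize Theorem~\ref{thm:bound-utility} under each case of Assumption~\ref{assump:i-ii} and then read off the orders of magnitude. Write $C(\delta,R_D):=\frac{R_D+2\sqrt{-R_D\log\delta}-2\log\delta}{\omega^2K}$. The bound in Theorem~\ref{thm:bound-utility} is $L\bar c\mu_N$ plus the statistical term $C_0$. We treat these two pieces separately.

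For part (a), under Assumption~\ref{assump:i-ii}(i) we have $R_D=N-1$ and $\lambda_{\min}(\Sigma_D)=\bar\lambda>0$. Theorem~\ref{thm:bound-utility} is stated only for $\lambda>0$, so we cannot set $\lambda=0$ directly. Instead, for each fixed realization we let $\lambda\downarrow 0$ in the bound. The numerator of $C_0$ tends to $2\sqrt{C(\delta,N-1)}$, and $\lambda_{\min}(\Sigma_D+\lambda\mathbf I)=\bar\lambda+\lambda\to\bar\lambda$. The cleanest way to justify the limit is to note that the events $E_\lambda$ on which the bound holds are all determined by the same Hanson–Wright event \eqref{eq:unknown-term}. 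That inequality was derived with an upper bound ($KR_D+\dots$) that does not depend on $\lambda$. On this single event of probability at least $1-\delta$, the inequality holds for every $\lambda>0$ simultaneously, so we can pass to the limit. This gives the first displayed inequality of (a). If $\mu_N=O(1/N)$, the first term is $L\bar c\,O(1/N)$. Since $\delta$, $\omega$, and $\bar\lambda$ are fixed constants, the second term is $O(\sqrt{N/K})$. When $N=o(K)$ and $N\to\infty$, both terms vanish.

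For part (b), under Assumption~\ref{assump:i-ii}(ii) we have $\lambda_{\min}(\Sigma_D+\lambda\mathbf I)=\lambda$ and $R_D\le N-1$. The expression $R+2\sqrt{-R\log\delta}$ is increasing in $R$, so $C(\delta,R_D)\le C(\delta,N-1)$. The two square roots in the numerator are then each bounded by $\sqrt{C(\delta,N-1)+\lambda\bar c^2(N-1)}$. This gives $C_0\le \frac{2}{\sqrt\lambda}\sqrt{C(\delta,N-1)+\lambda\bar c^2(N-1)}$, which matches Corollary~\ref{coll:bound-param}(b). Using $\sqrt{a+b}\le\sqrt a+\sqrt b$ with $\lambda>0$ fixed, this is $O(\sqrt{N/K})+O(\bar c\sqrt N)$. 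Adding $L\bar c\mu_N=O(1/N)$ yields the claim.

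The only delicate step is the $\lambda\to0$ passage in (a), that is, making sure the probability-$(1-\delta)$ event is uniform in $\lambda$. I will handle it by re-invoking the proof of Theorem~\ref{thm:alpha-sigma-matrix}, observing that the high-probability event there does not depend on $\lambda$. A direct alternative is to rerun Steps 1–3 of that proof with $\lambda=0$, replacing $(\Sigma_D+\lambda\mathbf I)^{-1}$ by $\Sigma_D^{-1}$, which exists. Everything else is monotonicity bookkeeping and constant absorption.
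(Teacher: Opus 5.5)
Your route is the paper's: specialize Theorem~\ref{thm:bound-utility}, taking $\lambda=0$ under Assumption~\ref{assump:i-ii}(i) (which the paper simply declares admissible), and using $R_D\le N-1$ and $\lambda_{\min}(\Sigma_D+\lambda\mathbf{I})=\lambda$ under Assumption~\ref{assump:i-ii}(ii), exactly as in Corollary~\ref{coll:bound-param}(b). Your part (b) is fine as written.

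\textbf{The $\lambda\downarrow 0$ step.} Your caution is warranted, since Theorem~\ref{thm:bound-utility} is only stated for $\lambda>0$. However, the reason you give for uniformity is wrong. The Hanson--Wright event is $\{V^\top M_\lambda V\le KR_D+2K\sqrt{-R_D\log\delta}-2K\log\delta\}$ with $M_\lambda=P(\Sigma_D+\lambda\mathbf{I})^{-1}P^\top$. This event does depend on $\lambda$; only the threshold is $\lambda$-free. It is repaired by monotonicity. For $\lambda\ge\lambda'$ we have $M_\lambda\preceq M_{\lambda'}$, so these events shrink as $\lambda\downarrow0$, and their intersection still has probability at least $1-\delta$ by continuity from above. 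Your fallback is cleaner and is what the paper implicitly does. Indeed, $M_0=KP(P^\top P)^{-1}P^\top$ is $K$ times an orthogonal projection of rank $N-1$, so $\mathrm{Tr}(M_0)=K(N-1)$, $\mathrm{Tr}(M_0^2)=K^2(N-1)$ and $\|M_0\|_{\rm op}=K$. Steps~1--3 of the proof of Theorem~\ref{thm:alpha-sigma-matrix} then go through with the $\lambda(\bar c\sqrt{N-1})^2$ term absent.

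\textbf{The genuine gap, inherited from the paper.} The last step of (a) treats $\bar\lambda$ as a constant while $N\to\infty$, and that is impossible. Each $p^k$ is a difference of two probability vectors, so $\|p^k_{2:N}\|_2^2\le\|p^k\|_\infty\|p^k\|_1\le 2$. Hence $\mathrm{Tr}(\Sigma_D)\le 2$ and $\bar\lambda=\lambda_{\min}(\Sigma_D)\le 2/(N-1)$. Inserting this into the first inequality of (a), the statistical term is at least $\sqrt{2}(N-1)/(\omega\sqrt{K})$. So the constant hidden in your $O\big(\sqrt{N/K}\big)$ grows at least like $\sqrt{N}$, and the bound can vanish only if $N=o(\sqrt{K})$. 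The claim that $N=o(K)$ suffices does not follow from Theorem~\ref{thm:bound-utility}.

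What the argument actually gives for growing $N$ is $O(1/N)+O\big(\sqrt{N/(\bar\lambda K)}\big)$. This tends to zero when $\mu_N\to 0$ and $N/(\bar\lambda K)\to 0$. For fixed $N$, the regime of Corollary~\ref{coll:bound-param}(a.1), your reasoning is sound.
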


{\color{black}
\subsection{Interpretation and implications of the statistical results}

This subsection interprets the statistical error bounds established above.
We explain the roles of the key assumptions (the informativeness of paired lotteries and the balance between the number of breakpoints $N$ and the number of queries $K$) and highlight their implications for practical questionnaire design.

\medskip
\noindent
\textbf{Roles of $N$ and $K$ (model complexity and sample size).}
In our setting, $K$ is the number of 
observed pairwise comparisons, while $N:=|\mathbb{Y}|$ is the number of distinct payoff support points induced by these comparisons, which also determines the number of breakpoints in the piecewise linear approximation $\hat u_{\rm MLE}$.
As discussed before Corollary~\ref{coll:bound-utility}, the error bound in Theorem~\ref{thm:bound-utility} consists of an approximation term and an estimation term.
Under the dense support condition $\mu_N=O(1/N)$ in Corollary~\ref{coll:bound-utility}(a), the total error is of order $O(1/N)$ (approximation) plus $O(\sqrt{N/K})$ (estimation).
This expression makes it clear that $K$ plays the role of a sample size, while $N$ plays the role of an effective model complexity (the dimension $N-1$ of adjusted utility values at breakpoints). 
Such an interplay between model complexity and sample size is standard in statistical learning and non-parametric estimation, which
reflects the classical approximation–estimation tradeoff~\cite{vapnik2013nature,hastie2009elements}.

\medskip
\noindent
\textbf{Geometric role of $\Sigma_D$ and why directional diversity matters.}
Recall that the empirical information matrix $\Sigma_D=\frac1K P^{\top}P$ in \eqref{eq:SigmaD} is the Gram matrix of the reduced probability-mass difference vectors $p^k_{2:N}$, $k=1,\ldots,K$.
Hence, ${\rm rank}(\Sigma_D)$ and $\lambda_{\rm min}(\Sigma_D)$ quantify how informative the pairwise comparison queries
are by measuring whether $\{p^k_{2:N}\}_{k=1}^K$ span $\mathbb{R}^{N-1}$ and how well-conditioned this span is.
This is exactly the content of Assumption~\ref{assump:i-ii}: the well-designed case requires $\lambda_{\rm min}(\Sigma_D)=\bar\lambda>0$, whereas the not well-designed case corresponds to $\lambda_{\rm min}(\Sigma_D)=0$.

Consider the case where the number of breakpoints $N$ is fixed. Adding a new 
query
contributes an additional positive semidefinite term to the unscaled Gram matrix $P^\top P$.
Let $P_K\in\mathbb{R}^{K\times (N-1)}$ be the matrix formed by the first $K$ 
queries
and let $p^{K+1}_{2:N}$ denote the new vector.
Then $P_{K+1}$ is obtained by appending $p^{K+1}_{2:N}$ as an additional row, and
\[
P_{K+1}^{\top}P_{K+1}
=
P_{K}^{\top}P_{K}+p^{K+1}_{2:N}\big(p^{K+1}_{2:N}\big)^{\top}\succeq P_{K}^{\top}P_{K}.
\]
Consequently, $\lambda_{\rm min}(P_{K+1}^{\top}P_{K+1})\ge \lambda_{\rm min}(P_{K}^{\top}P_{K})$.
Equivalently, writing the empirical
information matrix $\Sigma_{D,K}:=\frac{1}{K}P_K^{\top}P_K$, we obtain
\[
\Sigma_{D,K+1}
=
\frac{K}{K+1}\Sigma_{D,K}
+
\frac{1}{K+1}
p^{K+1}_{2:N}\big(p^{K+1}_{2:N}\big)^{\top}.
\]
This rank-one update reveals how the $(K+1)$-th query affects the empirical information matrix. 
For any unit vector $v\in\mathbb{R}^{N-1}$, the Rayleigh quotient satisfies
\begin{equation}\label{eq-rayleigh}
    v^{\top}\Sigma_{D,K+1}v
=
\frac{K}{K+1}\,v^{\top}\Sigma_{D,K}v
+\frac{1}{K+1}\big(p^{K+1}_{2:N}{}^{\top}v\big)^2,
\end{equation}
so the gain along direction $v$ is governed by the squared projection $\big(p^{K+1}_{2:N}{}^{\top}v\big)^2$.
This suggests a simple design principle for the $(K\!+\!1)$-th paired-lottery query.
If $\lambda_{\min}(\Sigma_{D,K})=0$ (rank deficiency), then adding a comparison with
$p^{K+1}_{2:N}\in{\rm span}\{p^k_{2:N}\}_{k=1}^K$ is redundant and cannot 
change
the zero eigenvalue.

To see this, we consider a case 
where 
$\text{rank}(\Sigma_{D,K})=N-2$ (i.e., $\Sigma_{D,K}$ has
only one zero eigenvalue). 
The condition
$\lambda_{\min}(\Sigma_{D,K})=0$ 
implies that 
there exists a nonzero vector 
$v\in\mathbb{R}^{N-1}$ lies in the null space of $\Sigma_{D,K}$, i.e.,
$v\in\ker(\Sigma_{D,K})$ and $(p^k_{2:N})^\top v = 0$, $k=1,\ldots,K$.
If the new comparison vector satisfies $p^{K+1}_{2:N}\in\mathrm{span}\{p^k_{2:N}\}_{k=1}^K$,
then $(p^{K+1}_{2:N})^\top v = 0$ as well. 
By
the update formula \eqref{eq-rayleigh}, we obtain 
$v^{\top}\Sigma_{D,K+1}v = 0$ and $\lambda_{\min}(\Sigma_{D,K+1})=0$.
To increase the rank, $p^{K+1}_{2:N}$ 
must
have a nonzero component outside the current span (equivalently, a nonzero projection onto $\ker(\Sigma_{D,K})$), so that the 
span of identifying directions 
expands.
For instance, we can take 
$p^{K+1}_{2:N}=\alpha u$ where $u$ is the unit basis vector of the null space of $\Sigma_{D,K}$,
i.e., a direction that is orthogonal to all 
$\{p^k_{2:N}\}_{k=1}^K$.
Therefore,
\[
\lambda_{\min}(\Sigma_{D,K+1})
=
\min\left\{\frac{\alpha^2}{K+1},\ \frac{K}{K+1}\lambda_{+}(\Sigma_{D,K})\right\},
\]
where $\lambda_{+}(\Sigma_{D,K})$ denotes the smallest positive eigenvalue of $\Sigma_{D,K}$.
In particular, $\lambda_{\min}(\Sigma_{D,K+1})=\frac{K}{K+1}\lambda_{+}(\Sigma_{D,K})$ when $\alpha^2=K\lambda_{+}(\Sigma_{D,K})$.

If instead $\lambda_{\min}(\Sigma_{D,K})>0$ (i.e., $\Sigma_{D,K}$ is full rank), a natural greedy strategy for designing a new query is to 
strengthen
the least-informed direction by choosing $p^{K+1}_{2:N}$ to have a large projection on an eigenvector associated with $\lambda_{\min}(\Sigma_{D,K})$, thereby improving conditioning.
As a special but instructive benchmark, one may consider 
an orthogonal (``vertical-cutting'') design, where each new $p^k_{2:N}$ is chosen approximately orthogonal to the previous ones; once $K=N-1$ linearly independent directions are collected, $\Sigma_{D,K}$ becomes full rank with a well-conditioned spectrum, achieving identification with the minimum number of informative queries.

\medskip
\noindent
\textbf{Relation to the Fisher information matrix and D-/E-optimal design.}
Notice that the normalization constraint implies $\theta_1=0$, so the free variables are only $\theta_{2:N}=[\theta_2,\ldots,\theta_N]^\top$.
We can then compute the generalized Fisher information matrix in the reduced parameter space as follows~\cite{YU2012789,saure2019ellipsoidal}: 
\[
\mathcal{I}_K(\theta)\;=-\mathbb{E}[\nabla^2_{\theta_{2:N}} \ell(\theta)]=\;\sum_{k=1}^K w_k(\theta_{2:N})\,  p^k_{2:N}(p^k_{2:N})^\top ,
\]
where $w_k(\theta_{2:N})\;=\;\frac{\exp(\theta_{2:N}^\top p^k_{2:N})}{\bigl(1+\exp(\theta_{2:N}^\top p^k_{2:N})\bigr)^2}$ and the Hessian is taken with respect to the free variables $\theta_{2:N}$. 
By Lemma \ref{lemma:gradient-bound},
$|\theta_{2:N}^\top p^k_{2:N}|\le 2\bar{c}$, which implies $w_k(\theta_{2:N})\geq \frac{1}{2+2\exp(2\bar{c})}>0$.
Consequently, 
$\lambda_{\min}(\Sigma_D)>0$ immediately implies $\lambda_{\min}(\mathcal{I}_K(\theta))>0$
for all such $\theta$, and lower bounds on $\lambda_{\min}(\Sigma_D)$ translate into
curvature/variance control through $\mathcal{I}_K(\theta)^{-1}$.

This also clarifies the connection to classical optimal experimental design.
D-optimal design maximizes $\det(\mathcal{I}_K(\theta))=\lambda_{\rm min}(\mathcal{I}_K(\theta))\prod_{i=2}^{N-1} \lambda_i(\mathcal{I}_K(\theta))$ (equivalently, it minimizes the D-error $\det(\mathcal{I}^{-1}_K(\theta))^{1/(N-1)}$), which
controls the volume of the asymptotic confidence ellipsoid.
In contrast, 
E-optimal design maximizes
$\lambda_{\min}(\mathcal{I}_K(\theta))$, focusing on the worst-informed direction~\cite{kiefer1959optimum,pukelsheim2006optimal}.
Our condition $\lambda_{\min}(\Sigma_D)>0$ 
can be viewed as 
an unweighted, parameter-robust analogue of the
E-optimal requirement, and it is necessary for any D-optimal criterion (since $\det(\cdot)>0$ implies full rank).
\citet{saure2019ellipsoidal} relate D-error criteria to
posterior covariance and Fisher-information-based approximations,
and use such (approximate) D-error measures to guide adaptive question selection.
By contrast, our matrix $\Sigma_D$ is defined purely from the lottery differences and
is used primarily to establish identifiability and finite-sample guarantees.
While Fisher-based criteria depend on the unknown parameter (typically through plug-in or posterior estimates),
$\Sigma_D$ is parameter-robust in its definition and can therefore be used
to analyze and guide query design without relying on estimated model parameters.

\medskip
\noindent
\textbf{Balancing the number of queries $K$ and the number of breakpoints $N$.}
To approximate a nonlinear non-parametric utility function, the 
set of breakpoints 
$\mathbb{Y}$ must be refined adaptively, which means that new breakpoints
are gradually introduced and the dimension of the adjusted utility 
values at breakpoints
increases (e.g., from $N$ to $N+1$). 
However, such refinement immediately enlarges the parameter space.
Even when the existing $K$ queries
fully span the previous reduced space $\mathbb{R}^{N-1}$,
they generally fail to span the expanded space $\mathbb{R}^{N}$.
Consequently, the empirical information matrix $\Sigma_D$
is typically 
rank-deficient in the 
enlarged
dimension,
and additional informative queries are required to regain identifiability. 

Moreover, merely ensuring $\lambda_{\min}(\Sigma_{D})>0$
is not sufficient to guarantee statistical convergence of the MLE. 
From the 
bounds in 
Theorem~\ref{thm:bound-utility}
and Corollary~\ref{coll:bound-utility},
the total error 
scales as $O({1}/{N})+O(\sqrt{N/K})$. 
For this total error bound to vanish asymptotically, 
it is necessary that $N\to\infty$ and $N/K \to 0$,
that is, $N=o(K)$.
This rate 
highlights 
that each additional degree of freedom
must be supported by a 
sufficiently faster growth 
in informative 
queries. 
This requirement reflects the classical principle that
model complexity must be controlled relative to the sample size.
Interpreting $K$ as the sample size and $N$ as a measure of 
effective model complexity, 
the condition $N=o(K)$ ensures that the approximation error
and the estimation error decrease simultaneously.
This 
scaling is consistent with the principle of sublinear growth of
complexity measures in learning theory~\cite{vapnik2013nature}, 
and with the classical approximation–estimation tradeoff~\cite{hastie2009elements},
which requires model complexity to increase slowly enough to preserve consistency in non-parametric estimation. 




\medskip
\noindent
\textbf{Implications for questionnaire design under $N=o(K)$.}
The preceding discussion shows that adaptive refinement of $\mathbb{Y}$
must be coordinated with a sufficient increase in informative queries.
This naturally suggests a structured multi-round design.

Suppose we start at round 1 with a piecewise linear approximation 
utility function with $N_1$
breakpoints.
At round $r$, suppose the current 
piecewise linear approximation has $N_r$ breakpoints in $\mathbb{Y}^{(r)}$.
We first generate $N_r-1$ queries supported on  $\mathbb{Y}^{(r)}$ whose reduced probability-mass difference vectors are as diverse as possible, ideally mutually orthogonal, so that the empirical information matrix $\Sigma_D$ remains well-conditioned
in the current $(N_r-1)$-dimensional parameter space.
Then, we introduce a single exploration query that adds one new breakpoint~\footnote{\color{black}
For example, it can be achieved via RUS, RRUS methods~\cite{armbruster2015decision}, or a modified RUS method by setting the new breakpoint as the midpoint of the widest sub-intervals between the current breakpoints.}.
Thus, at the end of round $r$, we generate $N_r$ queries in total.
In the next round $r+1$, the cardinality of 
the breakpoints
increases to $N_{r+1}=N_r+1$
due to the new breakpoint introduced in the last exploration query of the previous round.
At 
round $r+1$, the first $N_r$ mutually orthogonal queries are newly generated, rather than reusing the $N_r-1$ queries 
in
the previous round. 
Thus, after $R$ rounds, we have 
\[
N_R = N_1 + R -1 =O(R),\ \ K_R = \sum_{r=1}^{R} N_r = O(R^2),
\]
which implies $N_R = o(K_R)$.
This hybrid strategy simultaneously enforces directional diversity
for identification (Assumption~\ref{assump:i-ii}(i))
and controls complexity growth for statistical consistency
(Corollary~\ref{coll:bound-utility}(a)).

In the noiseless setting, the above multi-round design can be viewed as
an extension of \cite{WLX2026}.
In their framework, the number of breakpoints $N$ is fixed and each round
generates $N$ mutually orthogonal cuts in the parameter space to achieve exponential contraction of the ambiguity set.
In contrast, our procedure allows $N$ to increase adaptively across rounds
while preserving directional orthogonality within each round.


When response errors are present, similar exact geometric contraction
may no longer be achieved;
nevertheless, the statistical error bounds established in Theorem~\ref{thm:bound-utility} and Corollary~\ref{coll:bound-utility}  ensure consistency,
and the proposed 
questionnaire design strategy
maintains the identification condition
required for convergence.
}

\section{MLE-based VNM utility maximization with application to portfolio optimization}\label{sec-decision-making}


{\color{black}
In Section
\ref{sec-MLE}, we develop an MLE-based preference elicitation procedure to estimate a VNM utility model from pairwise comparison data in the presence of response error, discuss the existence and uniqueness of the 
MLE estimates
and then 
derive statistical error bounds and convergence rates for the estimated utility. 
In this section, we apply the elicited utility function
to a concrete decision-making problem to illustrate its practical implications.
Specifically, in the context of portfolio optimization, we consider maximizing the expected utility of random wealth using the MLE-based VNM utility function $\hat{u}_{\rm MLE}$. 
By introducing two new risk measures, Preference-at-Risk (PaR) and Preference-Robust-at-Risk (PRaR), we show that the optimization problem that maximizes the expected MLE-based VNM utility 
is robust to both the DM's response errors and the estimation errors from MLE in a probabilistic sense.
}


Suppose there are $S$ risky assets with random return rates $\xi_1,\ldots, \xi_S\in\mathcal{L}^p(\Omega, \mathcal{F}, \mathbb{P};\mathbb{R})$, 
{\color{black}and}
a risk-free asset with return rate $\xi_0=0$. 
{\color{black}Let}
$\xi:=[\xi_0,\xi_1,\ldots,\xi_S]^{\top}$,  
and let $x:=[x_0,x_1,\ldots,x_S]^\top\in\mathcal{X}$ denote the portfolio, where the 
{\color{black}feasible set}
$\mathcal{X}$ is a closed convex set defined as 
\begin{equation*}
    \mathcal{X}:=\left\{x\in\mathbb{R}^{S+1}\ \bigg|\ 
    \sum_{s=0}^{S}x_s=W_0,\ 0\leq x_0\leq W_0, \  0\leq x_s\leq c_s W_0,\ s=1,\ldots,S \right\}. 
\end{equation*}
Here, $W_0\in[0,\bar{b}]$ denotes the DM's total investment budget, and 
each $c_s\in[0,1]$ denotes the maximum proportion 
{\color{black}of the budget that is allowed to be allocated to}
the $s$-th risky asset. 
Let $h(x,\xi): \mathcal{X}\times\mathcal{L}^p(\Omega, \mathcal{F}, \mathbb{P};\mathbb{R}) \to \mathcal{L}^p(\Omega, \mathcal{F}, \mathbb{P};[0,\bar{b}])$ 
{\color{black}denote}
the random wealth 
{\color{black}generated}
from portfolio $x \in \mathcal{X}$ 
under the random return 
{\color{black}rate}
$\xi$. 
{\color{black}In what follows},
we 
{\color{black}set}
$h(x, \xi) := x^{\top} (\mathbf{1}+\xi)$, where $\mathbf{1}$ denotes the vector of ones. 
We first introduce a preference-based portfolio optimization problem that maximizes the 
{\color{black}expected MLE-based VNM utility}:
\begin{equation}\label{eq:DM-MLE-utility}
\max_{x\in \mathcal{X}} \ 
\mathbb{E}_{\mathbb{P}_{\xi}}\left[\hat{u}_{\rm MLE}(h(x,\xi))\right],
\end{equation}
where $\hat{u}_{\rm MLE}$ is the 
MLE-based VNM utility function 
in \eqref{eq:N-piecewise utility function}.
Notice that 
\begin{equation}\label{eq-joint-exp}
\mathbb{E}_{\mathbb{P}_{\xi}}\left[\hat{u}_{\rm MLE}(h(x,\xi))\right]=\mathbb{E}_{\mathbb{P}_{\varepsilon}}\left[\mathbb{E}_{\mathbb{P}_{\xi}}\left[\hat{u}_{\rm MLE}(h(x,\xi))\right] +\varepsilon\right] -\mathbb{E}[\varepsilon],
\end{equation}
{\color{black}when}
$\varepsilon$ 
{\color{black}follows a Gumbel distribution independent of $\xi$}.
{\color{black}Here, the expectation of the random utility $\hat{u}_{\rm MLE}(h(x,\xi))+\varepsilon$  jointly with respect to the random return $\xi$ and the response error $\varepsilon$ is equivalent to
}
the expectation of the MLE-based VNM utility $\hat{u}_{\rm MLE}(h(x,\xi))$. 
{\color{black}The joint expectation in \eqref{eq-joint-exp}} is also referred to as the \emph{expected expected utility} \citep{boutilier2003foundations}. 
This is closely related to Bayesian risk optimization, which imposes an additional risk functional to the expected cost objective function~\cite{wu2018bayesian,lin2022bayesian}, and related to Bayesian distributionally robust optimization~\cite{shapiro2023bayesian}. 
It is also connected to the decision rule approach for contextual decision-making, which introduces an outer expectation over contextual variables 
{\color{black}in addition to}
the conditional expected cost \citep{ban2019big}, as well as to the contextual optimization from a risk management perspective \citep{tao2025risk}.

Let $\mathscr{U}:=\{u\in\mathcal{L}^1([0,\bar{b}])\mid u: [0,\bar{b}]\to [0,1] \}$
{\color{black}denote a set of admissible utility functions}. 
We consider the 
{\color{black}VNM utility model with response error}
defined in \eqref{eq:DM-random-model}. To measure the randomness 
{\color{black}induced by}
response error, we propose a new risk measure 
called Preference-at-Risk (PaR) in Section \ref{sec:PaR}. 
We then examine the relationship between PaR and the MLE-based VNM utility function $\hat{u}_{\rm MLE}$. 
In Section \ref{sec:PRaR}, 
{\color{black}motivated by the}
ambiguity of the 
{\color{black}MLE-based optimal solutions}
discussed in Proposition \ref{thm:optimal-set-MLE}, we define a Preference-Robust-at-Risk (PRaR) measure for a given ambiguity set $\mathcal{U}\subseteq\mathscr{U}$. 
We also explore its 
{\color{black}connection}
with 
{\color{black}$\hat{u}_{\rm MLE}$}.
As shown in Proposition~\ref{prop:par} and Proposition~\ref{prop:prar}, the three portfolio optimization problems, maximizing the MLE-based VNM utility function, maximizing the PaR measure, and maximizing the PRaR measure, respectively, admit the same optimal solutions.

\subsection{Preference-at-Risk against response error}\label{sec:PaR}

Motivated by the well-known Value-at-Risk {\color{black}(VaR)} measure, we introduce 
the following 
Preference-at-Risk (PaR) to 
{\color{black}quantify}
the impact of response error in a probabilistic sense.

\begin{definition}[Preference-at-Risk (PaR)]\label{def:PaR}
Consider the 
{\color{black}VNM utility model with response error} defined in \eqref{eq:DM-random-model}. 
For any confidence level 
{\color{black}$\delta\in(0,1)$}
and any $u \in \mathscr{U}$,  
the Preference-at-Risk measure $\rho_{u,\delta}: \mathcal{L}^p(\Omega, \mathcal{F}, \mathbb{P};[0,\bar{b}])\to\mathbb{R}$ 
is defined as  
\begin{equation}\label{eq:PaR}
\rho_{u,\delta}(X):=\max \left\{ \eta \mid \mathbb{P}_{\varepsilon}\left(\mathbb{E}_{\mathbb{P}_X}[u(X)]+\varepsilon\geq \eta\right)\geq1-\delta\right\},\; \forall X\in\mathcal{L}^p(\Omega, \mathcal{F}, \mathbb{P};[0,\bar{b}]), 
\end{equation}
where $\varepsilon$ denotes 
the 
response error and is 
assumed to follow a ${\rm Gumbel}(0, \sigma)$ distribution, with CDF given by $F_{\varepsilon}(\cdot)$ 
in \eqref{eq:gumbel-cdf}. 
\end{definition}

Since the 
{\color{black}VNM utility model with response error in}
\eqref{eq:DM-random-model} 
includes a random 
term, we naturally introduce a chance constraint in \eqref{eq:PaR} to account for this randomness, in contrast to deterministic utility 
{\color{black}models}.
This chance constraint ensures that, for a random outcome $X$, the DM's 
{\color{black}VNM expected utility with response error},
exceeds a threshold $\eta$ with probability at least $1 - \delta$. 
The PaR measure is then defined as the maximal 
threshold 
{\color{black}that satisfies the chance constraint.}
Notably, this is 
{\color{black}analogous}
to the VaR measure, as both quantify the worst acceptable outcome at a given confidence level. 
It is also related to the utility-based shortfall risk measure, which evaluates risk by requiring that the expected loss under an increasing convex loss function does not exceed a given threshold~\cite{guo2019distributionally,delage2022shortfall}. 
{\color{black}
It is straightforward to verify that the PaR measure is law-invariant, as it depends on $X$ only through $\mathbb{E}[u(X)]$. 
If $u$ is non-decreasing, concave, and translation-invariant, then $\rho_{u,\delta}$ satisfies monotonicity, concavity, and translation invariance, respectively. 
Nevertheless, positive homogeneity fails in general due to both the non-homogeneous nature of $u$ and the additive constant term arising from the response error (see the equivalent formulation in Proposition \ref{prop:par}). 
Subadditivity also does not hold in general, since it would require $u$ to be subadditive, which essentially reduces to the linear case. 
Consequently, PaR is not a coherent risk measure in general.
} 
Given the additive structure of the 
{\color{black}utility model}
with respect to the response error, we have the following proposition.

\begin{proposition}\label{prop:par}
For any 
{\color{black}$\delta\in(0,1)$}
and any $u \in \mathscr{U}$, we have 
\begin{align*}\label{eq:rho-u-closed-form}
\rho_{u,\delta}(X)=\mathbb{E}_{\mathbb{P}_X}[u(X)]+F_{\varepsilon}^{-1}(\delta), \; \forall X\in\mathcal{L}^p(\Omega, \mathcal{F}, \mathbb{P};[0,\bar{b}]). 
\end{align*}
Moreover, consider the MLE-based VNM utility function $\hat{u}_{\rm MLE}$ in \eqref{eq:N-piecewise utility function}. For any confidence level 
{\color{black}$\delta\in(0,1)$},
the following two problems share the same set of optimal solutions: 
\begin{equation}\label{eq:par-port}
\max\limits_{x\in \mathcal{X}}\ \rho_{\hat{u}_{\rm MLE},\delta}(x^{\top}(\mathbf{1}+\xi))
\quad\Longleftrightarrow\quad
\max\limits_{x\in \mathcal{X}}\ \mathbb{E}_{\mathbb{P}_{\xi}}\left[\hat{u}_{\rm MLE}(x^{\top}(\mathbf{1}+\xi))\right].
\end{equation} 
\end{proposition}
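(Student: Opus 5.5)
The plan is to compute the chance constraint in \eqref{eq:PaR} explicitly and then observe that the two portfolio objectives differ only by an additive constant. Fix $\delta\in(0,1)$, $u\in\mathscr{U}$, and $X$. Write $m:=\mathbb{E}_{\mathbb{P}_X}[u(X)]$. Since $u$ takes values in $[0,1]$, $m$ is a well-defined deterministic number in $[0,1]$. By Assumption~\ref{assump:VNM}, $\varepsilon$ is independent of $X$, so the constraint reads $\mathbb{P}_\varepsilon(\varepsilon\geq \eta-m)\geq 1-\delta$. The Gumbel law has a continuous CDF, so $\mathbb{P}_\varepsilon(\varepsilon\geq t)=1-F_\varepsilon(t)$. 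The constraint is therefore equivalent to $F_\varepsilon(\eta-m)\leq\delta$.

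Next I use that $F_\varepsilon(x)=e^{-e^{-x/\sigma}}$ in \eqref{eq:gumbel-cdf} is continuous and strictly increasing from $0$ to $1$ on $\mathbb{R}$. Hence it has the explicit inverse $F_\varepsilon^{-1}(\delta)=-\sigma\log(-\log\delta)$ on $(0,1)$. So $F_\varepsilon(\eta-m)\leq\delta$ holds if and only if $\eta\leq m+F_\varepsilon^{-1}(\delta)$. The feasible set of $\eta$ is the closed half-line $(-\infty, m+F_\varepsilon^{-1}(\delta)]$, the maximum in \eqref{eq:PaR} is attained, and it equals $m+F_\varepsilon^{-1}(\delta)$, which is the closed form claimed.

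For the second claim, I apply the closed form with $u=\hat u_{\rm MLE}$ and $X=h(x,\xi)=x^\top(\mathbf{1}+\xi)$. This requires two checks. First, $\hat u_{\rm MLE}\in\mathscr{U}$: by Theorem~\ref{thm:MLE-reformulation} it lies in $\mathcal{U}_N\subseteq\mathcal{U}_c$, so it maps $[0,\bar b]$ into $[0,1]$. Second, $h(x,\xi)$ takes values in $[0,\bar b]$, which holds by the standing definition of $h$. Then
\[
\rho_{\hat u_{\rm MLE},\delta}(x^\top(\mathbf{1}+\xi))=\mathbb{E}_{\mathbb{P}_\xi}[\hat u_{\rm MLE}(x^\top(\mathbf{1}+\xi))]+F_\varepsilon^{-1}(\delta).
\]
The constant $F_\varepsilon^{-1}(\delta)$ depends only on $\delta$ and $\sigma$ (taken as $\hat\sigma_{\rm MLE}$ here), not on $x$. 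Therefore the two objectives in \eqref{eq:par-port} differ by a constant over $\mathcal{X}$, and their argmax sets coincide, including the case where either set is empty.

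The main obstacle is the careful handling of the chance constraint: the switch from $\geq$ to the CDF relies on $\mathbb{P}(\varepsilon=t)=0$, and attainment of the maximum relies on continuity and strict monotonicity of $F_\varepsilon$. With these, the remaining steps are routine.
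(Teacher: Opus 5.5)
Your proposal is correct and follows essentially the same route as the paper: rewrite the chance constraint via the Gumbel CDF as $\eta\le \mathbb{E}_{\mathbb{P}_X}[u(X)]+F_{\varepsilon}^{-1}(\delta)$, read off the maximum, and note that the two portfolio objectives differ only by the $x$-independent constant $F_{\varepsilon}^{-1}(\delta)$. Your explicit use of the continuity and strict monotonicity of $F_{\varepsilon}$, and your constant-shift argument for the second claim (which the paper leaves implicit), simply make the paper's argument more careful.
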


\begin{proof}
By the definition in \eqref{eq:PaR}, for any $u\in\mathscr{U}$, $\rho_{u,\delta}(X)$ is the optimal value of the following maximization problem: 
\begin{subequations}\label{eq:Pra-initial}
\begin{align}
\max_{\eta\in\mathbb{R}}\ & \eta\\
\label{const:par-chance}
\text{s.t.}\ 
& \mathbb{P}_{\varepsilon}\left[\mathbb{E}_{\mathbb{P}_X}[u(X)]+\varepsilon\geq \eta\right]\geq1-\delta.  
\end{align}
\end{subequations} 
Considering the CDF of $\varepsilon$ given by $F_{\varepsilon}(\cdot)$ in \eqref{eq:gumbel-cdf}, the constraint \eqref{const:par-chance} is equivalent to 
\begin{equation*}
    \eta\leq \mathbb{E}_{\mathbb{P}_X}[u(X)]+F_{\varepsilon}^{-1}(\delta), 
\end{equation*}
which obviously implies that the optimal value of \eqref{eq:Pra-initial} is $\mathbb{E}_{\mathbb{P}_X}[u(X)]+F_{\varepsilon}^{-1}(\delta)$, and thus $\rho_{u,\delta}(X)=\mathbb{E}_{\mathbb{P}_X}[u(X)]+F_{\varepsilon}^{-1}(\delta)$. 
\end{proof}

From \eqref{eq:par-port}, we observe that maximizing the 
PaR 
measure 
{\color{black}of}
random wealth, which incorporates 
response error 
{\color{black}through}
a quantile 
{\color{black}criterion},
is equivalent to directly maximizing the corresponding expected MLE-based VNM utility function 
in \eqref{eq:DM-MLE-utility}. 
This equivalence shows that, although optimization problem \eqref{eq:DM-MLE-utility} 
{\color{black}uses}
a deterministic estimate of the DM's utility, it remains robust to the 
response error in a probabilistic sense. 
{\color{black}Consequently}, 
$\hat{u}_{\rm MLE}$ provides a reliable estimate 
of the DM's preferences
{\color{black}given}
their potentially inconsistent 
{\color{black}choices.}

\subsection{Preference-Robust-at-Risk against response error and estimation error}\label{sec:PRaR}

Beyond the 
PaR 
measure in Section~\ref{sec:PaR}, 
{\color{black}which captures the}
randomness in response error, we define a robust counterpart 
{\color{black}risk measure}
that further accounts for the estimation error 
{\color{black}arising from}
the MLE procedure.   

\begin{definition}[Preference-Robust-at-Risk, PRaR]\label{def:PraR}
Consider the 
{\color{black}VNM utility model with response error}
defined in \eqref{eq:DM-random-model} and an ambiguity set $\mathcal{U}\subseteq\mathscr{U}$. 
For any confidence level 
{\color{black}$\delta\in(0,1)$},
the 
Preference-Robust-at-Risk (PRaR) 
measure $\rho_{\mathcal{U},\delta}: \mathcal{L}^p(\Omega, \mathcal{F}, \mathbb{P};[0,\bar{b}])\to\mathbb{R}$ 
is defined as 
\begin{equation}\label{eq:PraR}
\rho_{\mathcal{U},\delta}(X):=\max \left\{ \eta \mid \mathbb{P}_{\varepsilon}\left[\mathbb{E}_{\mathbb{P}_X}[u(X)]+\varepsilon\geq \eta\right]\geq1-\delta,\ \forall u\in \mathcal{U}\right\}. 
\end{equation}
\end{definition}

The idea of PRaR is closely related to preference robust optimization (PRO)~\cite{armbruster2015decision,guo2024utility}.
{\color{black}Specifically}, 
at a 
{\color{black}given}
confidence level, 
we require that for a random outcome $X$, the 
{\color{black}expected VNM utility with response error
exceeds} the threshold $\eta$ for all 
utility functions $u\in\mathcal{U}$, 
{\color{black}thus ensuring}
preference robustness. 
PRaR is also closely related to stochastic dominance~\cite{dentcheva2007stability,rudolf2008optimization}, which requires that payoff comparison
{\color{black}relations}
hold for all monotone (and/or concave) utility functions. 
The following proposition shows that 
PRaR 
$\rho_{\mathcal{U},\delta}(\cdot)$ is the worst-case 
PaR 
$\rho_{u,\delta}(\cdot)$ over the ambiguity set $\mathcal{U}$. This observation allows us to 
{\color{black}compute}
PRaR 
{\color{black}by evaluating $\rho_{u,\delta}$}
for each $u\in\mathcal{U}$. 


\begin{proposition}\label{prop:PaR-PraR}
Given an ambiguity set $\mathcal{U}\subseteq\mathscr{U}$, for any 
{\color{black}$\delta\in(0,1)$},
we have 
\begin{equation*}
\rho_{\mathcal{U},\delta}(X) =  \inf_{ u\in \mathcal{U}}  \rho_{u,\delta}(X),\ \forall X\in\mathcal{L}^p(\Omega, \mathcal{F}, \mathbb{P};[0,\bar{b}]). 
\end{equation*}  
\end{proposition}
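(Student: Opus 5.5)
The plan is to reduce the robust chance constraint in \eqref{eq:PraR} to a family of scalar inequalities, using the closed form of Proposition~\ref{prop:par}. Fix $X$ and $\delta\in(0,1)$. For each $u\in\mathcal{U}$, the proof of Proposition~\ref{prop:par} shows the following. The constraint $\mathbb{P}_{\varepsilon}\left[\mathbb{E}_{\mathbb{P}_X}[u(X)]+\varepsilon\geq \eta\right]\geq 1-\delta$ is equivalent to
\[
\eta\leq \mathbb{E}_{\mathbb{P}_X}[u(X)]+F_\varepsilon^{-1}(\delta)=\rho_{u,\delta}(X).
\]
This uses that the Gumbel CDF $F_\varepsilon$ is continuous and strictly increasing, so $\mathbb{P}(\varepsilon\ge t)\ge 1-\delta$ holds iff $t\le F_\varepsilon^{-1}(\delta)$.

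Consequently, the feasible set of $\eta$ in \eqref{eq:PraR} is the intersection over $u\in\mathcal{U}$ of the half-lines $(-\infty,\rho_{u,\delta}(X)]$. That intersection equals $(-\infty,\inf_{u\in\mathcal{U}}\rho_{u,\delta}(X)]$. The infimum is finite: every $u\in\mathscr{U}$ takes values in $[0,1]$, so $\rho_{u,\delta}(X)\ge F_\varepsilon^{-1}(\delta)>-\infty$. The set is therefore a nonempty closed half-line, and its maximum is attained at the infimum. This gives $\rho_{\mathcal{U},\delta}(X)=\inf_{u\in\mathcal{U}}\rho_{u,\delta}(X)$.

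I will write this as two inequalities. First, any feasible $\eta$ satisfies $\eta\le\rho_{u,\delta}(X)$ for every $u$, hence $\eta\le\inf_u\rho_{u,\delta}(X)$. Second, $\eta^\dagger:=\inf_u\rho_{u,\delta}(X)$ satisfies $\eta^\dagger\le\rho_{u,\delta}(X)$ for each $u$, so by the equivalence it is feasible for every $u$. The step needing care is that the ``max'' in the definition is attained even though the infimum over $\mathcal{U}$ may not be. This is resolved by noting that the feasible set is closed, because the inequalities are non-strict. I will also remark that if $\mathcal{U}$ is empty the statement degenerates, and assume $\mathcal{U}\neq\emptyset$.
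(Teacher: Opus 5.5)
Your proof is correct. Its skeleton matches the two-inequality argument in the paper: any feasible $\eta$ lies below every $\rho_{u,\delta}(X)$, and conversely the infimum is feasible for every $u$.

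The difference is in how you show that the infimum is feasible. You pass through the Gumbel closed form of Proposition~\ref{prop:par}, which turns each chance constraint into the half-line $\eta\le\rho_{u,\delta}(X)$. The paper instead uses only two facts: that $\rho_{u,\delta}(X)$ is itself feasible, and that $\eta\mapsto\mathbb{P}_{\varepsilon}[\mathbb{E}_{\mathbb{P}_X}[u(X)]+\varepsilon\geq\eta]$ is nonincreasing. So the paper's argument uses nothing about the Gumbel law beyond the attainment already built into the definition of $\rho_{u,\delta}$.

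Your version is, however, slightly more careful on two points the paper leaves unchecked. You verify that the infimum is finite, since $u\ge 0$ gives $\rho_{u,\delta}(X)\ge F_\varepsilon^{-1}(\delta)$, and hence that the ``max'' in \eqref{eq:PraR} is attained. You also flag the degenerate case $\mathcal{U}=\emptyset$. The paper treats $\inf_{u\in\mathcal{U}}\rho_{u,\delta}(X)$ as a feasible real number without addressing either point.
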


\begin{proof}
On one hand, for 
any $u\in\mathcal{U}$, we have the relation between the two sets that 
\begin{equation*}
    \{ \eta \mid \mathbb{P}_{\varepsilon}\left[\mathbb{E}_{\mathbb{P}_X}[u(X)]+\varepsilon\geq \eta\right]\geq1-\delta,\ \forall u\in \mathcal{U}\}\subseteq\{ \eta\mid \mathbb{P}_{\varepsilon}\left[\mathbb{E}_{\mathbb{P}_X}[u(X)]+\varepsilon\geq \eta\right]\geq1-\delta\}.  
\end{equation*}
Thus, we have
\begin{align*}
\rho_{u,\delta}(X)
&=\max_\eta \{ \eta \mid \mathbb{P}_{\varepsilon}\left[\mathbb{E}_{\mathbb{P}_X}[u(X)]+\varepsilon\geq \eta\right]\geq1-\delta\}\\
&\geq\max_\eta \{ \eta \mid \mathbb{P}_{\varepsilon}\left[\mathbb{E}_{\mathbb{P}_X}[u(X)]+\varepsilon\geq \eta\right]\geq1-\delta,\ \forall u\in \mathcal{U}\}=\rho_{\mathcal{U},\delta}(X),  
\end{align*}
which implies that $\rho_{\mathcal{U},\delta}(X)\leq \inf\limits_{ u\in \mathcal{U}}\rho_{u,\delta}(X)$. 
On the other hand, note that $\inf\limits_{ u\in \mathcal{U}}\rho_{u,\delta}(X)\leq\rho_{u,\delta}(X)$, $\forall u\in\mathcal{U}$. 
Then we have  
\begin{align*}
\mathbb{P}_{\varepsilon}\left[\mathbb{E}_{\mathbb{P}_X}[u(X)]+\varepsilon\geq\inf_{ u\in \mathcal{U}}\rho_{u,\delta}(X)\right]\geq \mathbb{P}_{\varepsilon}\left[\mathbb{E}_{\mathbb{P}_X}[u(X)]+\varepsilon\geq\rho_{u,\delta}(X)\right] \geq 1-\delta, \ \forall u\in\mathcal{U},  
\end{align*}
which implies that $\inf\limits_{ u\in \mathcal{U}}\rho_{u,\delta}(X)$ is a feasible solution of \eqref{eq:Pra-initial}, and hence $\inf\limits_{ u\in \mathcal{U}}\rho_{u,\delta}(X)\leq\rho_{\mathcal{U},\delta}(X)$. 
\end{proof}

We then study properties of the PRaR measure 
{\color{black}over the set $\mathcal{U}^*_c$}
in \eqref{eq:MLE-u-opt-set},
{\color{black}consisting of}
all possible maximum likelihood estimates of the VNM utility function. 

\begin{lemma}\label{prop:lower-bound}
Consider the MLE-based VNM utility function $\hat{u}_{\rm MLE}$ in \eqref{eq:N-piecewise utility function} and the 
{\color{black}associated}
optimal solution set of VNM utility functions $\mathcal{U}^{*}_c$ 
{\color{black}defined}
in \eqref{eq:MLE-u-opt-set}.
{\color{black}Then,}
$$\min\limits_{u\in\mathcal{U}_c^*} u(y) = \hat{u}_{\rm MLE}(y),\ \ \forall y\in[0,\bar{b}].$$
\end{lemma}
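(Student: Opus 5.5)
The argument has two parts: attainment and a lower bound. Throughout, I work under the standing assumptions of Proposition~\ref{thm:optimal-set-MLE}, so that $\hat{u}_{\rm MLE}$ is well defined and $\mathcal{U}^*_c$ is the set in \eqref{eq:MLE-u-opt-set}.

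\emph{Attainment.} I first check that $\hat{u}_{\rm MLE}\in\mathcal{U}^*_c$. Since $\gamma^*>0$, the rescaled vectors $\hat{\alpha}^{\rm MLE}=\bar\alpha^*/\gamma^*$ and $\hat{\beta}^{\rm MLE}=\bar\beta^*/\gamma^*$ satisfy the constraints \eqref{eq:MLE-re1-constraints-alpha-beta}--\eqref{eq:MLE-re1-constraints-normalization}. Concretely:
\begin{itemize}
\item the slopes are nonincreasing, nonnegative on the last piece, and at most $L$ on the first piece;
\item $\hat{\alpha}^{\rm MLE}_1=0$ and $\hat{\alpha}^{\rm MLE}_N=1$.
\end{itemize}
Hence the piecewise linear interpolant \eqref{eq:N-piecewise utility function} is monotone, concave, $L$-Lipschitz and normalized, so $\hat{u}_{\rm MLE}\in\mathcal{U}_N\subseteq\mathcal{U}_c$. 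By construction it agrees with itself at the breakpoints, so $\hat{u}_{\rm MLE}\in\mathcal{U}^*_c$. This gives $\min_{u\in\mathcal{U}^*_c}u(y)\le\hat{u}_{\rm MLE}(y)$ for every $y$, and shows the infimum is attained.

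\emph{Lower bound.} Next I show $u(y)\ge\hat{u}_{\rm MLE}(y)$ for every $u\in\mathcal{U}^*_c$ and every $y\in[0,\bar b]$. I would fix $u\in\mathcal{U}^*_c$ and use the PLA operator $\mathcal{T}$ of Definition~\ref{def:PLU}. Because $u(y_j)=\hat{u}_{\rm MLE}(y_j)$ for all $j$, and $\mathcal{T}u$ depends only on the breakpoint values $u(y_j)$ (through linear interpolation on each $[y_j,y_{j+1})$ with the endpoint values $0,1$), we have $\mathcal{T}u=\hat{u}_{\rm MLE}$ identically on $[0,\bar b]$. The paper already proves, from concavity, the chord inequality $u(y)\ge\mathcal{T}u(y)$ for $y\in[y_j,y_{j+1}]$. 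Applying it gives $u(y)\ge\hat{u}_{\rm MLE}(y)$ for all $y$.

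Combining the two parts yields the claimed equality pointwise. The only point I expect to need care is the identification $\mathcal{T}u=\hat{u}_{\rm MLE}$ at the right endpoint and at breakpoints. This is immediate from the definitions, since both use the same half-open pieces and the same values $0,1$ at $y_1,y_N$. Nothing probabilistic is involved, so the main obstacle is just bookkeeping about feasibility of the rescaled solution in $\mathcal{U}_c$.
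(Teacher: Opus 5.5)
Your proposal is correct and follows essentially the same route as the paper. Attainment comes from $\hat{u}_{\rm MLE}\in\mathcal{U}^*_c$, and the lower bound comes from the concavity chord inequality on each $[y_j,y_{j+1}]$ together with $u(y_j)=\hat{u}_{\rm MLE}(y_j)$. The differences are cosmetic: you verify $\hat{u}_{\rm MLE}\in\mathcal{U}_c$ explicitly from the rescaled constraints and route the chord inequality through the identity $\mathcal{T}u=\hat{u}_{\rm MLE}$, whereas the paper takes membership as given and writes the chord inequality out directly.
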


\begin{proof}
On the one hand, since $\hat{u}_{\rm MLE}(\cdot)\in\mathcal{U}^*_c$, $\min\limits_{u\in\mathcal{U}^*_c} u(y)\leq \hat{u}_{\rm MLE}(y)$, $\forall y\in[0,\bar{b}]$. 
On the other hand, for any ${u}\in\mathcal{U}^*_c$, we have ${u}(y_j)=\hat{\alpha}^{{\rm MLE}}_{j}=\hat{u}_{\rm MLE}(y_j)$, $j=1,\ldots,N$. 
Then, by the concavity of $u$, we have 
\begin{eqnarray*}
{u}(y) &\geq & (1-\frac{y-y_{j}}{y_{j+1}-y_{j}}){u}(y_{j}) + \frac{y-y_{j}}{y_{j+1}-y_{j}}{u}(y_{j+1})  \\
&=& \frac{{u}(y_{j+1})-{u}(y_{j})}{y_{j+1}-y_{j}} (y-y_j)+{u}(y_{j}) = \frac{\hat{\alpha}^{{\rm MLE}}_{j+1}-\hat{\alpha}^{{\rm MLE}}_{j}}{y_{j+1}-y_j}(y-{y}_j)+\hat{\alpha}^{{\rm MLE}}_{j}= \hat{u}_{\rm MLE}(y),
\end{eqnarray*}
$\forall y\in [y_j,y_{j+1}]$, $j=1,\ldots,N-1$. 
Thus, $0\leq \hat{u}_{\rm MLE}(y)\leq {u}(y)$, $\forall y\in[y_1,y_N]$, $\forall u\in\mathcal{U}^{*}_c$, which implies that $\hat{u}_{\rm MLE}(y)\leq \min\limits_{u\in\mathcal{U}^*_c} u(y)$, $\forall y\in[0,\bar{b}]$.  
\end{proof}

{\color{black}Lemma~\ref{prop:lower-bound} essentially reflects the property that, within a class of monotonically increasing and concave functions with prescribed values on a finite set of breakpoints, the pointwise minimum function dominating these values is given by the piecewise-linear interpolation 
through the breakpoints~\cite{armbruster2015decision,hu2015robust,hu2019data,
haskell2018preference}.
}
{\color{black}By}
Lemma~\ref{prop:lower-bound}, we conclude that $\hat{u}_{\rm MLE}$ is the worst-case utility in $\mathcal{U}^{*}_c$, 
{\color{black}and it provides}
a uniform pointwise lower bound on all utility functions in $\mathcal{U}^{*}_c$. 
See Figure \ref{fig:lower-bound} for an illustrative example. 
{\color{black}Considering}
the two 
{\color{black}sources}
of randomness in \eqref{eq:PraR}, 
{\color{black}stemming from $\varepsilon$ and $X$,}
we 
{\color{black}obtain the following result on}
the worst-case utility in $\mathcal{U}_c^*$.

\begin{figure}[htbp]
    \centering
    \includegraphics[width=0.8\linewidth]{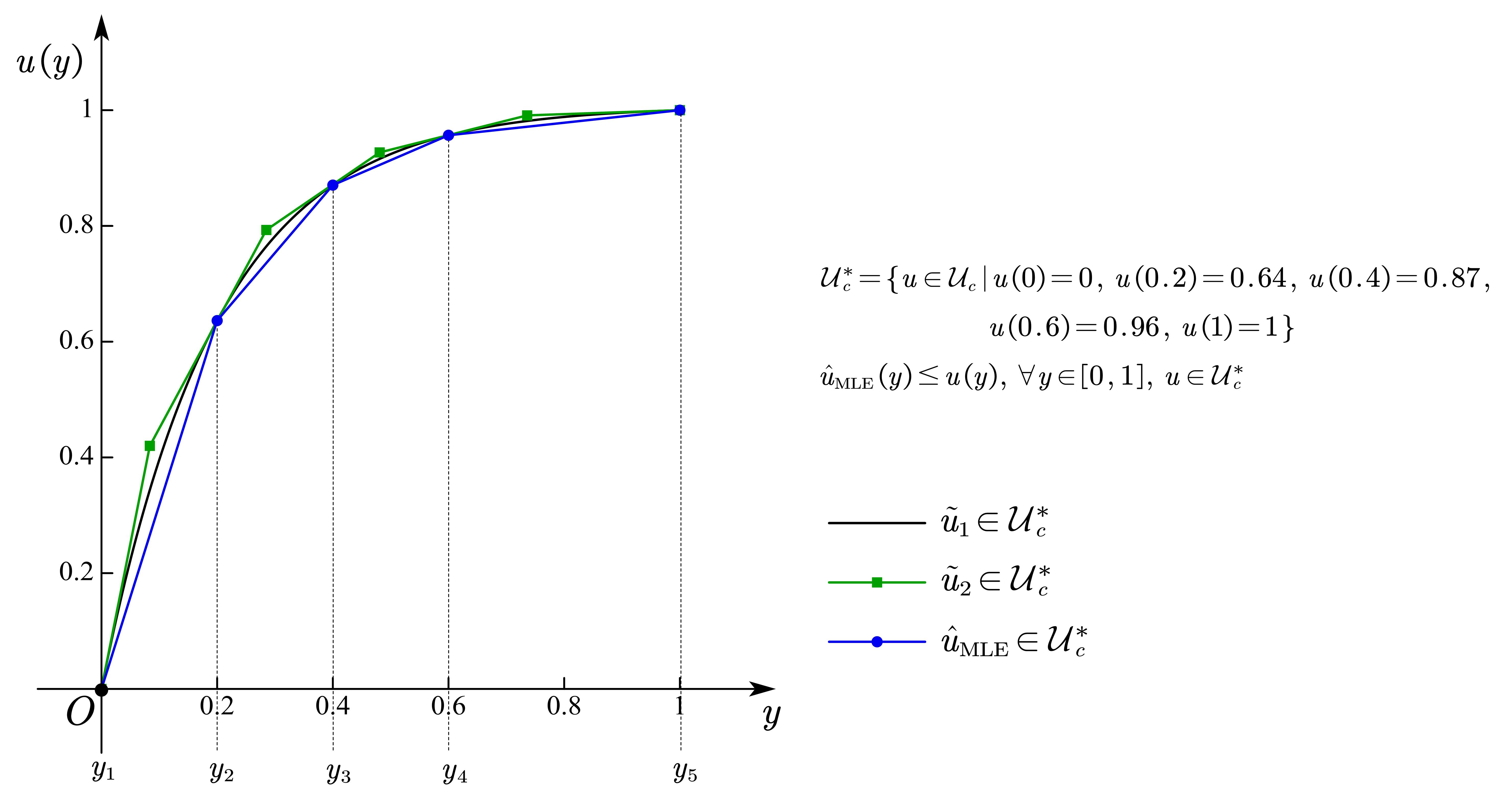}
    \caption{An illustrative example of $\hat{u}_{\rm MLE}$ as the pointwise lower bound of $\mathcal{U}^{*}_c$ when $N=5$ and $\bar{b}=1$.  
    }
    \label{fig:lower-bound}
\end{figure}


\begin{lemma}\label{lemma:inf} 
For any $\eta\in\mathbb{R}$, we have 
\begin{equation*}
\begin{aligned}
\inf\limits_{u\in\mathcal{U}_c^*} \mathbb{P}_{\varepsilon} \left[\mathbb{E}_{\mathbb{P}_{X}}[u(X)]+\varepsilon\geq\eta\right]
=\mathbb{P}_{\varepsilon} \left[\inf\limits_{u\in\mathcal{U}_c^*}\mathbb{E}_{\mathbb{P}_{X}}[u(X)]+\varepsilon\geq\eta\right]
=\mathbb{P}_{\varepsilon} \left[\mathbb{E}_{\mathbb{P}_{X}}[\hat{u}_{\rm MLE}(X)]+\varepsilon\geq\eta\right]. 
\end{aligned}
\end{equation*}
\end{lemma}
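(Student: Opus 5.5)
We prove the two equalities separately. Throughout, let $m:=\inf_{u\in\mathcal{U}_c^*}\mathbb{E}_{\mathbb{P}_X}[u(X)]$ and $\hat m:=\mathbb{E}_{\mathbb{P}_X}[\hat u_{\rm MLE}(X)]$.

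\textbf{Second equality ($m=\hat m$).} We integrate the pointwise statement of Lemma~\ref{prop:lower-bound}.

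\begin{itemize}
\item[(a)] For every $u\in\mathcal{U}_c^*$, Lemma~\ref{prop:lower-bound} gives $u(y)\ge \hat u_{\rm MLE}(y)$ for all $y\in[0,\bar b]$. Since $X$ takes values in $[0,\bar b]$ and both functions are bounded in $[0,1]$, monotonicity of the expectation yields $\mathbb{E}_{\mathbb{P}_X}[u(X)]\ge \hat m$. Hence $m\ge\hat m$.
\item[(b)] By Theorem~\ref{thm:MLE-reformulation}, $\hat u_{\rm MLE}\in\mathcal{U}_c$. By construction it interpolates $\hat\alpha^{\rm MLE}$ at the breakpoints, so $\hat u_{\rm MLE}\in\mathcal{U}_c^*$. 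This gives $m\le\hat m$, and the infimum is attained at $\hat u_{\rm MLE}$.
\end{itemize}

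Substituting $m=\hat m$ inside the probability gives the second equality for every $\eta$.

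\textbf{First equality.} Fix $\eta$ and set $g(t):=\mathbb{P}_\varepsilon[t+\varepsilon\ge\eta]=1-F_\varepsilon(\eta-t)$. The key point is that $\varepsilon$ is independent of $X$ and of $u$. So for each $u$ the probability $\mathbb{P}_\varepsilon\big[\mathbb{E}_{\mathbb{P}_X}[u(X)]+\varepsilon\ge\eta\big]$ is exactly $g$ evaluated at the deterministic number $\mathbb{E}_{\mathbb{P}_X}[u(X)]$. The Gumbel CDF $F_\varepsilon$ in \eqref{eq:gumbel-cdf} is continuous and strictly increasing, so $g$ is continuous and nondecreasing in $t$. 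We then argue in two directions.

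\begin{itemize}
\item[(a)] Monotonicity of $g$ gives
\[
\inf_{u\in\mathcal{U}_c^*} g\big(\mathbb{E}[u(X)]\big)\;\ge\; g\Big(\inf_{u\in\mathcal{U}_c^*}\mathbb{E}[u(X)]\Big)=g(m).
\]
\item[(b)] The reverse inequality holds because the infimum $m$ is attained at $\hat u_{\rm MLE}\in\mathcal{U}_c^*$, so $g(m)=g(\hat m)$ is itself one of the values in the left-hand infimum. Continuity of $g$ would also suffice if attainment were unavailable.
\end{itemize}

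Combining (a) and (b) yields the first equality, and together with the previous step all three expressions coincide.

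\textbf{Main obstacle.} The only delicate point is interchanging $\inf_u$ with the probability. This is legitimate only because $\varepsilon$ is independent of $u$ and $X$ and because $g$ is monotone. Attainment of the infimum by $\hat u_{\rm MLE}$ then makes the interchange exact, without any limiting argument. A minor technicality is that $\mathcal{U}_c^*$ must be nonempty and must contain $\hat u_{\rm MLE}$. This is guaranteed under the standing assumptions of Proposition~\ref{thm:optimal-set-MLE}: $\gamma^*>0$ and $\Sigma_D$ of full rank.
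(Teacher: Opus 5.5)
Your proposal is correct and follows essentially the same route as the paper's proof. The paper likewise obtains $\inf_{u\in\mathcal{U}_c^*}\mathbb{E}_{\mathbb{P}_X}[u(X)]=\mathbb{E}_{\mathbb{P}_X}[\hat{u}_{\rm MLE}(X)]$ from Lemma~\ref{prop:lower-bound} together with $\hat{u}_{\rm MLE}\in\mathcal{U}_c^*$; it then gets one inequality of the first equality from attainment at $\hat{u}_{\rm MLE}$ and the other from the monotonicity of $t\mapsto\mathbb{P}_{\varepsilon}[\varepsilon\geq\eta-t]$. Your explicit remark that $\mathcal{U}_c^*$ and the membership $\hat{u}_{\rm MLE}\in\mathcal{U}_c^*$ rely on the standing assumptions of Proposition~\ref{thm:optimal-set-MLE} is a useful clarification that the paper leaves implicit.
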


\begin{proof}

Since $\hat{u}_{\rm MLE}\in\mathcal{U}^{*}_c$, it is obvious that $\mathbb{E}_{\mathbb{P}_X}\left[\hat{u}_{\rm MLE}(X)\right]=\inf_{u\in\mathcal{U}^*_{c}}\mathbb{E}_{\mathbb{P}_X}\left[u(X)\right]$ by Lemma \ref{prop:lower-bound}, thus, for any $\eta\in\mathbb{R}$, 
\begin{equation*}
\begin{aligned}
\mathbb{P}_{\varepsilon} \left[\inf\limits_{u\in\mathcal{U}_c^*}\mathbb{E}_{\mathbb{P}_{X}}[u(X)]+\varepsilon\geq\eta\right]
=\mathbb{P}_{\varepsilon} \left[\mathbb{E}_{\mathbb{P}_{X}}[\hat{u}_{\rm MLE}(X)]+\varepsilon\geq\eta\right]
\geq\inf\limits_{u\in\mathcal{U}_c^*} \mathbb{P}_{\varepsilon} \left[\mathbb{E}_{\mathbb{P}_{X}}[u(X)]+\varepsilon\geq\eta\right]. 
\end{aligned}
\end{equation*}

On the other hand, for any $u\in\mathcal{U}^*_{c}$, by $\inf\limits_{u\in\mathcal{U}_c^*}\mathbb{E}_{\mathbb{P}_{X}}[u(X)]\leq\mathbb{E}_{\mathbb{P}_{X}}[u(X)]$, we have $\eta-\inf\limits_{u\in\mathcal{U}_c^*}\mathbb{E}_{\mathbb{P}_{X}}[u(X)]\geq\eta-\mathbb{E}_{\mathbb{P}_{X}}[u(X)]$, $\forall \eta\in\mathbb{R}$, and then  
\begin{equation*}
    \mathbb{P}_{\varepsilon} \left[\varepsilon\geq\eta-\inf\limits_{u\in\mathcal{U}_c^*}\mathbb{E}_{\mathbb{P}_{X}}[u(X)]\right]\leq\mathbb{P}_{\varepsilon} \left[\varepsilon\geq\eta-\mathbb{E}_{\mathbb{P}_{X}}[u(X)]\right],\; \forall \eta\in\mathbb{R},\; \forall u\in\mathcal{U}^*_{c}. 
\end{equation*}
This implies 
$$
\mathbb{P}_{\varepsilon} \left[\varepsilon\geq\eta-\inf\limits_{u\in\mathcal{U}_c^*}\mathbb{E}_{\mathbb{P}_{X}}[u(X)]\right]\leq\inf\limits_{u\in\mathcal{U}_c^*} \mathbb{P}_{\varepsilon} \left[\varepsilon\geq\eta-\mathbb{E}_{\mathbb{P}_{X}}[u(X)]\right],\; \forall \eta\in\mathbb{R}. 
$$  
Combining the two aspects gives the desired result. 
\end{proof}

Based on Lemma \ref{lemma:inf} and the assumption that $\varepsilon$ follows a Gumbel distribution, we 
{\color{black}derive the following}
result.

\begin{proposition}\label{prop:prar}

For any confidence level 
{\color{black}$\delta\in(0,1)$}, 
we have 
\begin{equation*}
    \rho_{\mathcal{U}^{*}_{c},\delta}(X)=\rho_{{\hat{u}_{\rm MLE}},\delta}(X)=\mathbb{E}_{\mathbb{P}_X}\left[\hat{u}_{\rm MLE}(X)\right]+F^{-1}_{\varepsilon}(\delta),\; \forall X\in\mathcal{L}^p(\Omega, \mathcal{F}, \mathbb{P};[0,\bar{b}]). 
\end{equation*}
Moreover, for any confidence level 
{\color{black}$\delta\in(0,1)$}, 
the following two problems share the same set of optimal solutions: 
\begin{equation}\label{eq:prar-port}
\max\limits_{x\in \mathcal{X}}\ \rho_{\mathcal{U}^{*}_{c},\delta}(x^{\top}(\mathbf{1}+\xi))
\quad\Longleftrightarrow\quad
\max\limits_{x\in \mathcal{X}}\ \mathbb{E}_{\mathbb{P}_{\xi}}\left[\hat{u}_{\rm MLE}(x^{\top}(\mathbf{1}+\xi))\right].
\end{equation} 
\end{proposition}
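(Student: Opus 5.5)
The plan is to chain Proposition~\ref{prop:PaR-PraR}, Lemma~\ref{lemma:inf} and Proposition~\ref{prop:par}. Fix $X$ and $\delta\in(0,1)$, and write $m:=\mathbb{E}_{\mathbb{P}_X}[\hat{u}_{\rm MLE}(X)]$. By Lemma~\ref{prop:lower-bound} (integrated against $\mathbb{P}_X$) we have $m=\inf_{u\in\mathcal{U}^*_c}\mathbb{E}_{\mathbb{P}_X}[u(X)]$, and this infimum is attained at $\hat u_{\rm MLE}\in\mathcal{U}^*_c$.

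I would then work directly with the feasible set in the definition \eqref{eq:PraR}. The constraint "$\mathbb{P}_\varepsilon[\mathbb{E}[u(X)]+\varepsilon\ge\eta]\ge 1-\delta$ for all $u\in\mathcal{U}^*_c$" is equivalent to $\inf_{u\in\mathcal{U}^*_c}\mathbb{P}_\varepsilon[\mathbb{E}[u(X)]+\varepsilon\ge\eta]\ge1-\delta$. By Lemma~\ref{lemma:inf}, this reads $\mathbb{P}_\varepsilon[m+\varepsilon\ge\eta]\ge 1-\delta$, which is exactly the constraint defining $\rho_{\hat u_{\rm MLE},\delta}(X)$ in \eqref{eq:PaR}. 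The two feasible sets of $\eta$ therefore coincide, and so $\rho_{\mathcal{U}^*_c,\delta}(X)=\rho_{\hat u_{\rm MLE},\delta}(X)$.

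As a cross-check, Proposition~\ref{prop:PaR-PraR} together with the closed form of Proposition~\ref{prop:par} gives
\[
\inf_{u\in\mathcal{U}^*_c}\bigl(\mathbb{E}[u(X)]+F_\varepsilon^{-1}(\delta)\bigr)=m+F_\varepsilon^{-1}(\delta),
\]
which agrees with the above. The closed form $m+F_\varepsilon^{-1}(\delta)$ itself then follows from Proposition~\ref{prop:par} applied with $u=\hat u_{\rm MLE}\in\mathscr{U}$; this membership holds because $\hat u_{\rm MLE}$ takes values in $[0,1]$.

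For the equivalence \eqref{eq:prar-port}, substitute $X=x^\top(\mathbf 1+\xi)\in\mathcal{L}^p(\Omega,\mathcal F,\mathbb P;[0,\bar b])$ for each $x\in\mathcal X$. The objective $\rho_{\mathcal{U}^*_c,\delta}(x^\top(\mathbf 1+\xi))$ then equals $\mathbb{E}_{\mathbb{P}_\xi}[\hat u_{\rm MLE}(x^\top(\mathbf 1+\xi))]$ plus the constant $F_\varepsilon^{-1}(\delta)$, which does not depend on $x$. Adding a constant leaves the argmax unchanged, so the two problems have the same optimal solution set.

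The main care point is that "max" in \eqref{eq:PraR} is attained. The chance constraint is equivalent to $\eta\le m+F_\varepsilon^{-1}(\delta)$, where $F_\varepsilon^{-1}(\delta)$ is finite because $F_\varepsilon$ is continuous and strictly increasing. The feasible set is thus a closed half-line and the maximum exists. A second point is checking that the wealth lies in $[0,\bar b]$ so the propositions apply; this is built into the paper's setup of $h$.
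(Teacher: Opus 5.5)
Your proposal is correct and follows essentially the same route as the paper: Lemma~\ref{lemma:inf} collapses the robust chance constraint over $\mathcal{U}^*_c$ to the single chance constraint for $\hat u_{\rm MLE}$. Inverting the Gumbel CDF (equivalently, Proposition~\ref{prop:par}) then gives the closed form $\mathbb{E}_{\mathbb{P}_X}[\hat u_{\rm MLE}(X)]+F_\varepsilon^{-1}(\delta)$, and \eqref{eq:prar-port} follows because the two objectives differ only by the constant $F_\varepsilon^{-1}(\delta)$, a step and an attainment check that the paper leaves implicit.
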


\begin{proof}
According to the definition in \eqref{eq:PraR}, $\rho_{{\color{black}\mathcal{U}^*_c},\delta}(X)$ is the optimal value of the following maximization problem: 
\begin{subequations}
\begin{align}
\max_{\eta\in\mathbb{R}}\ & \eta\\
\label{const:par-chance-1}
\text{s.t.}\ 
& \mathbb{P}_{\varepsilon}\left[\mathbb{E}_{\mathbb{P}_X}[u(X)]+\varepsilon\geq \eta\right]\geq1-\delta,\; \forall u\in{\color{black}\mathcal{U}^*_c}.   
\end{align}
\end{subequations} 
By Lemma \ref{lemma:inf} and the CDF of $\varepsilon$ given by $F_{\varepsilon}(\cdot)$ in \eqref{eq:gumbel-cdf}, constraint \eqref{const:par-chance-1} is equivalent to 
\begin{equation*}
    \eta-\mathbb{E}_{\mathbb{P}_{X}}[\hat{u}_{\rm MLE}(X)]\leq F_{\varepsilon}^{-1}(\delta). 
\end{equation*}
Maximizing $\eta$ gives $\rho_{\mathcal{U}^{*}_{c},\delta}(X)=\mathbb{E}_{\mathbb{P}_X}\left[\hat{u}_{\rm MLE}(X)\right]+F^{-1}_{\varepsilon}(\delta)=\rho_{{\hat{u}_{\rm MLE}},\delta}(X)$, where the second equality follows from Proposition \ref{prop:par}. 
\end{proof}

From \eqref{eq:prar-port}, 
{\color{black}it follows that}
maximizing the 
PRaR 
measure of random wealth, which 
{\color{black}accounts for}
both the DM's 
response error and the 
estimation error 
of the VNM utility function 
in $\mathcal{U}^*_c$, is equivalent to 
maximizing the expected MLE-based VNM utility function 
in \eqref{eq:DM-MLE-utility}. This equivalence further 
{\color{black}demonstrates}
the robustness of 
$\hat{u}_{\rm MLE}$ against both response errors and estimation errors.

\subsection{Sample average approximation}

We finally derive a tractable reformulation of \eqref{eq:DM-MLE-utility} using the sample average approximation (SAA). 
Assume we collect $T$ i.i.d. samples of the return 
{\color{black}rate} $\xi$, denoted 
{\color{black}by}
$\xi^{(1)},\xi^{(2)},\ldots,\xi^{(T)}$. 
The empirical distribution of $\xi$ is given by $\mathbb{P}[\xi=\xi^{(t)}]=\frac{1}{T}$, $t=1,\ldots,T$. 
Then, \eqref{eq:DM-MLE-utility} can be approximated by 
$$
\max\limits_{x\in\mathcal{X}}\frac{1}{T}\sum_{t=1}^T \hat{u}_{\rm MLE}(x^{\top}(\mathbf{1}+\xi^{(t)})).
$$

Furthermore, due to the piecewise linear structure of $\hat{u}_{\rm MLE}$ in \eqref{eq:N-piecewise utility function}, 
$\hat{u}_{\rm MLE}$ 
{\color{black}can be represented}
as the minimum of ${N}-1$ linear pieces:
\begin{equation*}
\hat{u}_{\rm MLE}(y)=\min\limits_{j=1,\ldots,N-1}\left[\hat{\beta}^{{\rm MLE}}_j(y-{y}_j)+\hat{\alpha}^{{\rm MLE}}_{j}\right].  
\end{equation*} 
By introducing auxiliary variables $\zeta_1,\ldots,\zeta_T$, we can reformulate 
{\color{black}the SAA of}
\eqref{eq:DM-MLE-utility} as the following linear programming problem: 
\begin{align*}
\max\limits_{x\in\mathbb{R}^{S+1}} \; 
& \frac{1}{T}\sum_{t=1}^T \zeta_t\\ 
\text{s.t.}\; & \zeta_t\leq \hat{\beta}^{{\rm MLE}}_j(x^{\top}(\mathbf{1}+\xi^{(t)})-{y}_j)+\hat{\alpha}^{{\rm MLE}}_{j}, \ j=1,\ldots,N-1,\ t=1,\ldots,T,\\
& \sum_{s=0}^{S} x_s=W_0,\\
&0\leq x_0\leq W_0,\ 0\leq x_s\leq c_s W_0 ,\ s=1, \ldots, S.
\end{align*}




\section{Numerical test}\label{sec-num-test}

We 
{\color{black}conduct}
three groups of numerical 
{\color{black}experiments}
to evaluate the effectiveness of the proposed preference elicitation method 
{\color{black}based on}
discrete choices with response error, on a simulated 
DM. 
Each group of 
{\color{black}experiments}
validates a 
{\color{black}specific}
consideration in our method: 
1) the modeling of the scale parameter $\sigma$ as a decision variable rather than a fixed constant;  
2) the incorporation of 
{\color{black}structural preference}
information (monotonicity, concavity, and Lipschitz continuity) of the DM's VNM utility function 
{\color{black}into the preference elicitation process}; 
and 3) the design of pairwise comparison lotteries in dataset $D$. 

To evaluate the 
{\color{black}accuracy}
of the elicited utility preference, we 
{\color{black}use}
the $\ell_2$ and $\ell_\infty$ statistical errors introduced in Proposition \ref{thm:2-infty}, namely $\left\| \frac{\hat{\alpha}^{\rm MLE}}{\hat{\sigma}_{\rm MLE}}-\frac{\alpha^*}{\sigma^*} \right\|_{2}$ and $\left\| \frac{\hat{\alpha}^{\rm MLE}}{\hat{\sigma}_{\rm MLE}}-\frac{\alpha^*}{\sigma^*} \right\|_{\infty}$, which quantify the deviation from the DM’s true 
{\color{black} utility with response error.}
Here, $(\hat{\alpha}^{\rm MLE}, \hat{\sigma}_{\rm MLE})$ is obtained by solving the MLE problem \eqref{eq:MLE-re-final}, while the 
$(\alpha^*, \sigma^*)$ serves as the 
{\color{black}true}
benchmark for comparison.



\subsection{Test setting}\label{sec:num-setting} 

We begin by 
{\color{black}specifying}
the parameter settings used in the numerical tests: 
1) the Lipschitz modulus in \eqref{eq:U_c} is set to $L=10$; 2) the upper bound of $1/\sigma$ is set to $\bar{c}=100$; 
{\color{black}and}
3) the maximal outcome that the DM cares about is set to 100,000 (i.e., $\bar{b}=100000$). 
In the simulation, we assume 
a virtual DM who acts according to a pre-defined true 
{\color{black}VNM utility model with response error},
$U^*(\cdot)=\mathbb{E}[u^*(\cdot)]+\varepsilon$, 
where the response error  
$\varepsilon\sim \text{Gumbel}(0,\sigma^*)$
{\color{black}with}
$\sigma^*=10$, and the true VNM utility function $u^*$ is set as: 
\begin{equation}\label{eq:num-true-VNM-u}
    u^*(y)=\frac{1}{1-e^{-6}}(1-e^{-6\times10^{-5}y}),\; \forall y\in[0,100000]. 
\end{equation}
{\color{black}The function in \eqref{eq:num-true-VNM-u}}
is monotonically increasing, concave, Lipschitz continuous, and normalized to the range $[0, 1]$ over the interval $[0,100000]$. 
{\color{black} The exponential function has been widely used in parametric utility modeling because it exhibits constant absolute risk aversion~\cite{choi2011multi}. Here, we normalize it by rescaling with a constant so that it fits the range $[0,1]$. 
It is also commonly adopted as a ground-truth utility function in numerical studies of preference elicitation~\cite{liu2025preference}, preference robust optimization~\cite{guo2024utility},
and likelihood-based robust parameter estimation~\cite{nguyen2020distributionally}. 
}

\textbf{Dataset.}
We then 
{\color{black}construct}
the dataset $D=\{(W_k, Y_k, Z_k)\mid k=1,\ldots,K\}$, which consists of $K$ pairwise lottery comparisons. 
We first randomly draw {\color{black}$N-2$} points from $[0,100000]$ 
{\color{black}restricted to}
multiples of 100; 
together with $0$ and $100000$,  
{\color{black}these points}
constitute the 
{\color{black}support set}
$\mathbb{Y}$. 
We then generate 
risky lotteries $W_k$ and $Y_k$ for $k=1,\ldots,K$, by randomly selecting up to three points from 
$\mathbb{Y}$ and assigning random probabilities so that 
{\color{black}they sum to}
$1$. 
For each such pair 
$(W_k,Y_k)$, we compute $U^*(W_k)=\mathbb{E}[u^*(W_k)]+\varepsilon_{1k}$ and $U^*(Y_k)=\mathbb{E}[u^*(Y_k)]+\varepsilon_{2k}$, respectively, where $u^*$ is defined in \eqref{eq:num-true-VNM-u} and $\varepsilon$ is 
{\color{black}sampled}
from 
${\rm Gumbel}(0,\sigma^*)$, $\sigma^*=10$. If $U^*(W_k)$ 
{\color{black}exceeds}
$U^*(Y_k)$, 
we set $Z_k=1$ to simulate the DM's response; otherwise, 
{\color{black}we set}
$Z_k=-1$. 
An illustrative example of a dataset with $K=5$ is provided in Table \ref{tab:dataset}. 


\begin{table}[htbp]
\small
\centering
\caption{An example of a randomly generated 
{\color{black}pairwise comparison}
dataset $D$}
\label{tab:dataset}
\renewcommand{\arraystretch}{1.3} 
\begin{tabular}{c | p{0.4\textwidth} | p{0.4\textwidth} | c}
\toprule
\multicolumn{1}{c|}{\centering $k$} & 
\multicolumn{1}{c|}{\centering $W_k$} & 
\multicolumn{1}{c|}{\centering $Y_k$} & 
\multicolumn{1}{c}{\centering $Z_k$} \\
\midrule
1 & A 30\% chance at winning \$42,500 and a 40\% chance at winning \$50,200; otherwise \$0.
  & A 45\% chance at winning \$3,800;  otherwise \$0. 
  & -1 \\
2 & A 40\% chance at winning \$88,200 and a 35\% chance at winning \$36,000; otherwise \$0. 
  & A 5\% chance at winning \$52,800; otherwise \$0.
  & 1 \\
3 & A 55\% chance at winning \$11,400 and a 10\% chance at winning \$88,200; otherwise \$0.  
  & A 95\% chance at winning \$4,000; otherwise \$0.
  & -1 \\
4 & A 35\% chance at winning \$32,800; otherwise \$0. 
  & A 40\% chance at winning \$7,800; otherwise \$0.  
  & -1 \\
5 & A 40\% chance at winning \$47,700 and a 20\% chance at winning \$7,800; otherwise \$0.
  & A 70\% chance at winning \$26,900 and a 5\% chance at winning \$95,000; otherwise \$0. 
  & 1 \\
\bottomrule
\end{tabular}
\end{table}

\subsection{Modeling scale parameter $\sigma$ as a variable versus as a constant}


In this test, we compare the elicitation errors of the MLE-based utility function under two approaches: 
modeling the scale parameter $\sigma$ of the response error 
as 
{\color{black}
a free}
variable 
as in \eqref{eq:DM-random-model}, 
versus 
{\color{black}fixing $\sigma$ to a misspecified constant value.}


Given a randomly generated dataset $D$ consisting of $K$ pairwise comparison queries, we 
{\color{black}jointly}
estimate the VNM utility values $\hat{\alpha}^{\rm MLE}$ and the scale parameter $\hat{\sigma}_{\rm MLE}$ by solving \eqref{eq:MLE-re-final}. Meanwhile, we consider two benchmark models by solving \eqref{eq:MLE-re-final} with $\sigma$ fixed {\color{black}to a misspecified value of} 
either $1$ or $100$. We set the number of 
breakpoints in $\mathbb{Y}$ to $N=50$ and consider $K\in\{50,100,200,400,600,800,1000,2000,3000,4000,5000\}$. 
As $K$ increases, pairwise lotteries are generated sequentially and the dataset 
is 
constructed 
{\color{black}by progressively adding new pairwise comparison queries}
without replacement. 
For each $K$, we compute the
$\ell_2$ {\color{black}statistical} error $\| \frac{\hat{\alpha}^{\rm MLE}}{\sigma}-\frac{\alpha^*}{\sigma^*} \|_{2}$ and the $\ell_{\infty}$ {\color{black}statistical} error $\| \frac{\hat{\alpha}^{\rm MLE}}{\sigma}-\frac{\alpha^*}{\sigma^*} \|_{\infty}$ between the elicited  $\hat{\alpha}^{\rm MLE}$ and the true $\alpha^*$. 
In our model, we set $\sigma=\hat{\sigma}^{\rm MLE}$,
{\color{black}whereas}
in the two benchmark models 
{\color{black}we fix $\sigma$}
to $1$ or $100$. 
We 
{\color{black}repeat}
the test 30 times independently and 
average the resulting
$\ell_2$  and $\ell_{\infty}$ {\color{black}statistical} errors to mitigate the impact of randomness in dataset generation. 
The results are reported in Figure \ref{fig:num-const}.

From Figure \ref{fig:num-const}, we observe the following: 
\begin{itemize}
\item When we set $\sigma$ as a free variable, the $\ell_2$ and $\ell_{\infty}$ {\color{black}statistical} errors of the elicited VNM utility values and scale parameter $(\hat{\alpha}^{\rm MLE}, \hat{\sigma}_{\rm MLE})$ 
{\color{black}decrease and}
converge to $0$ at a 
{\color{black}noticeable}
rate as the number of 
queries increases, 
{\color{black}consistent with the theoretical guarantees in}
Proposition~\ref{thm:2-infty} and Corollary~\ref{coll:bound-param}. 
{\color{black}In our experiments, $N$ is fixed while $K$ increases, 
which corresponds to the setting of Corollary~\ref{coll:bound-param}(a.1).
}


\item 
The $\ell_2$ and $\ell_{\infty}$ {\color{black}statistical} errors of the two benchmark models with a fixed {\color{black}but misspecified} scale parameter 
{\color{black}are highly sensitive to}
the chosen value. 
Moreover, they 
{\color{black}vary only slightly}
as the number of queries increases, showing no clear convergence trend. 

\end{itemize}
{\color{black}In conclusion, these results suggest that it is preferable to treat $\sigma$ as a free variable and estimate it jointly with $\alpha$, rather than fixing $\sigma$ to a misspecified value, when the modeler lacks sufficiently accurate information about the response error $\varepsilon$.}

\begin{figure}[htbp]
    \centering
    \subfigure
    {
    \includegraphics[width=0.48\linewidth]{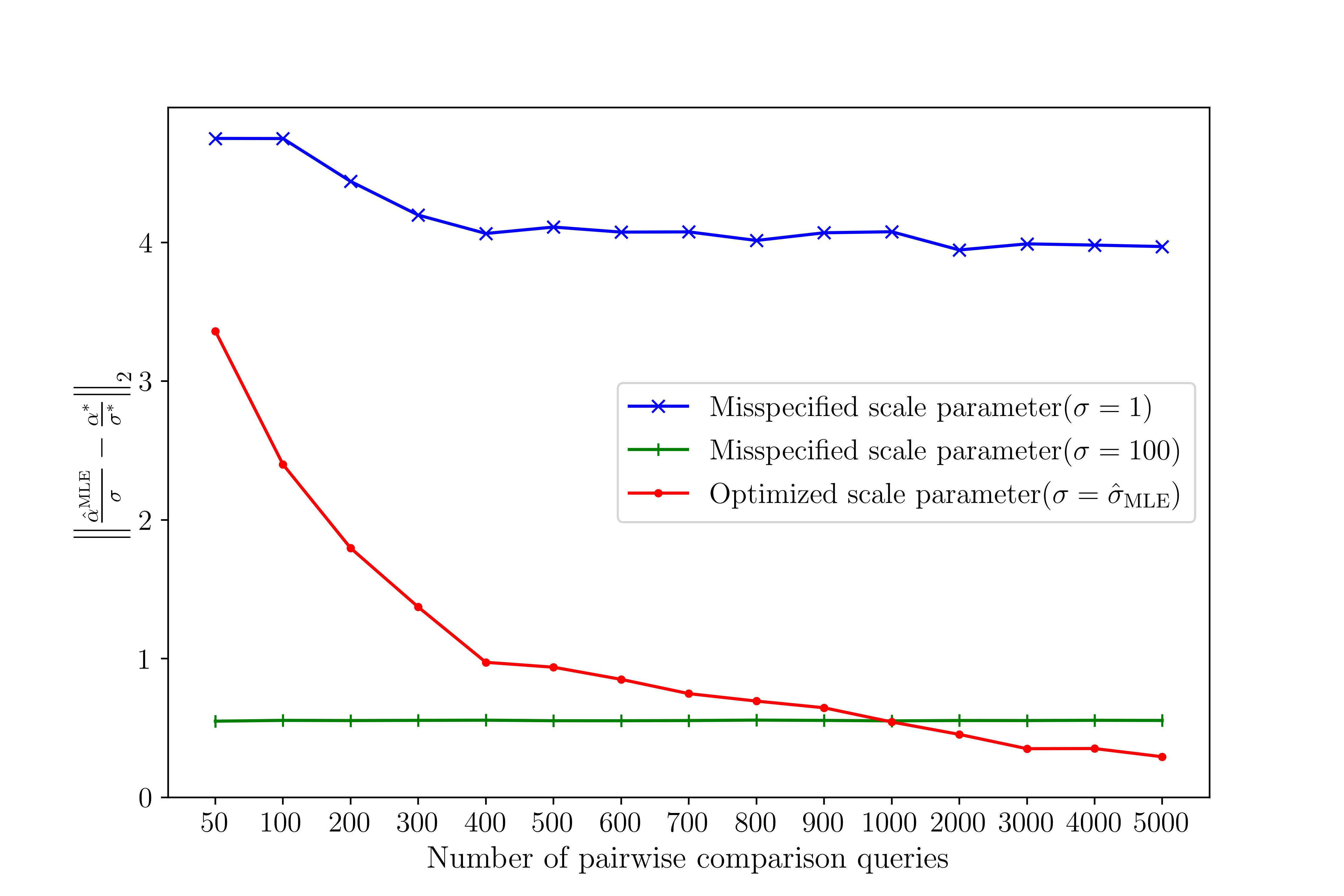}
    }
    \subfigure
    {\includegraphics[width=0.48\linewidth]{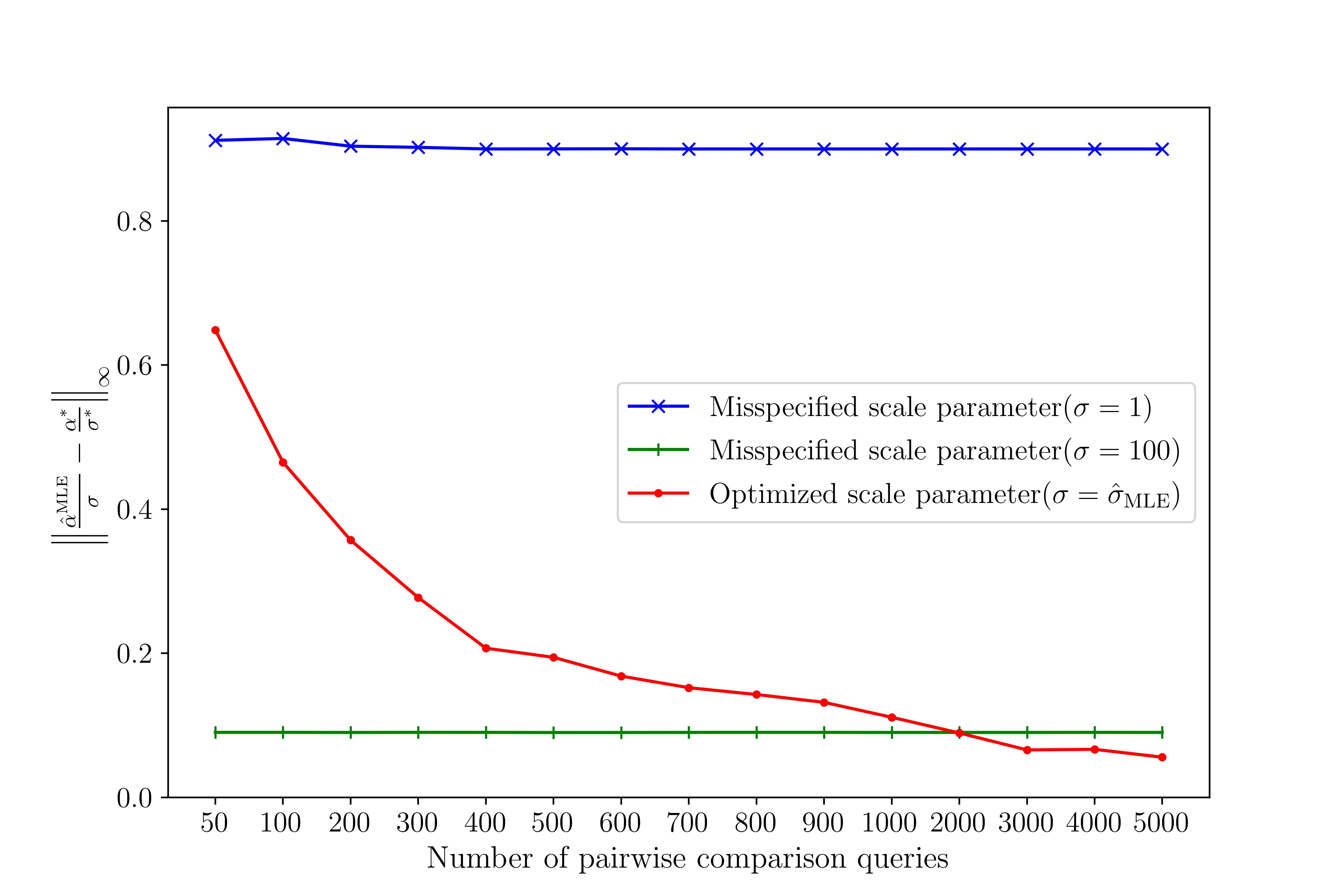}
    }
\caption{$\ell_2$ and $\ell_{\infty}$ {\color{black}statistical} errors between $(\hat{\alpha}^{\rm MLE}, \sigma)$ and $(\alpha^*,\sigma^*)$ with optimized $\hat{\sigma}_{\rm MLE}$, or with fixed
{\color{black}but misspecified}
$\sigma = 1$ and $\sigma = 100$, as the number of pairwise comparison queries increases.} 
\label{fig:num-const}
\end{figure}


\subsection{Effect of incorporating 
{\color{black}structural preference}
information about the VNM utility function}\label{sec:num-prior-info}


In the 
{\color{black}proposed VNM utility model with response error in}
\eqref{eq:DM-random-model}, the true VNM utility function $u^*$ is assumed to be monotonically increasing, concave, and Lipschitz continuous, i.e., $u^*\in\mathcal{U}_c$ as defined in \eqref{eq:U_c}. 
{\color{black}This assumption can be viewed as}
{\color{black}structural preference}
information 
{\color{black}about}
the DM's 
{\color{black}underlying true VNM utility function 
known in advance
}
to the modeler, as discussed in Remark~\ref{remark:prior info}.  
In this test, we examine whether incorporating such 
{\color{black}structural preference}
information 
{\color{black}can}
improve the efficiency of utility elicitation.

We consider four preference elicitation models with different levels of 
{\color{black}structural preference}
information as follows: 1) with all 
{\color{black}structural preference}
information (monotonicity, concavity, and Lipschitz continuity); 2) without Lipschitz continuity; 3) without both Lipschitz continuity and concavity; and 4) without any 
{\color{black}structural preference}
information. 
The last three settings correspond to the cases discussed in Remark~\ref{remark:prior-MLE}.

We generate a series of datasets $D$ with $K$ pairwise comparison queries. 
{\color{black}We set}
$N=50$ and 
{\color{black}consider}
$K\in\{400,500,600,700,800,900, 1000,2000,3000,4000,5000\}$. 
For each $K$, 
{\color{black}we solve the corresponding MLE problems under each of the four model settings with different levels of 
{\color{black}structural preference}
information, and thus obtain}
$(\hat{\alpha}^{\rm MLE}, \hat{\sigma}_{\rm MLE})$. 
{\color{black}We then}
compute the $\ell_2$ and $\ell_{\infty}$ {\color{black}statistical} errors (i.e., $\| \frac{\hat{\alpha}^{\rm MLE}}{\hat{\sigma}_{\rm MLE}}-\frac{\alpha^*}{\sigma^*} \|_{2}$ and $\| \frac{\hat{\alpha}^{\rm MLE}}{\hat{\sigma}_{\rm MLE}}-\frac{\alpha^*}{\sigma^*} \|_{\infty}$).
{\color{black}We average the results}
over 30 independent runs to reduce 
{\color{black}the impact of}
randomness in dataset generation. The results are presented in Figure~\ref{fig:num-func}.

From Figure \ref{fig:num-func}, we can observe that: 
\begin{itemize}
\item 
The $\ell_2$ and $\ell_{\infty}$ {\color{black}statistical} errors of the four models, each with a different level of 
{\color{black}structural preference}
information, all converge as the number of queries increases. 

\item 
{\color{black} The MLE models incorporating more 
{\color{black}structural preference}
information exhibit faster convergence of}
the $\ell_2$ and $\ell_{\infty}$
{\color{black}statistical} errors 
than those {\color{black}incorporating}
less 
{\color{black}structural preference}
information. 

\item 
{\color{black}Removing the Lipschitz continuity constraint has little effect on the convergence rate, as we set 
a large Lipschitz constant $L = 10$, which makes the constraint effectively inactive in this case study}. 

\end{itemize}
These results suggest that incorporating more 
{\color{black}structural preference}
information about the true VNM utility function, such as monotonicity and concavity 
({\color{black}corresponding to}
the DM’s risk-averse attitude) can significantly improve elicitation efficiency. 
{\color{black}In particular, it can}
achieve satisfactory accuracy with fewer pairwise comparison queries and fewer interactions with the DM. 
Since interactions with a real DM in practice are costly, incorporating 
{\color{black}structural preference} 
information is essential {\color{black}for improving the efficiency of preference elicitation}. 

\begin{figure}[htp]
    \centering
    {    \includegraphics[width=0.47\linewidth]{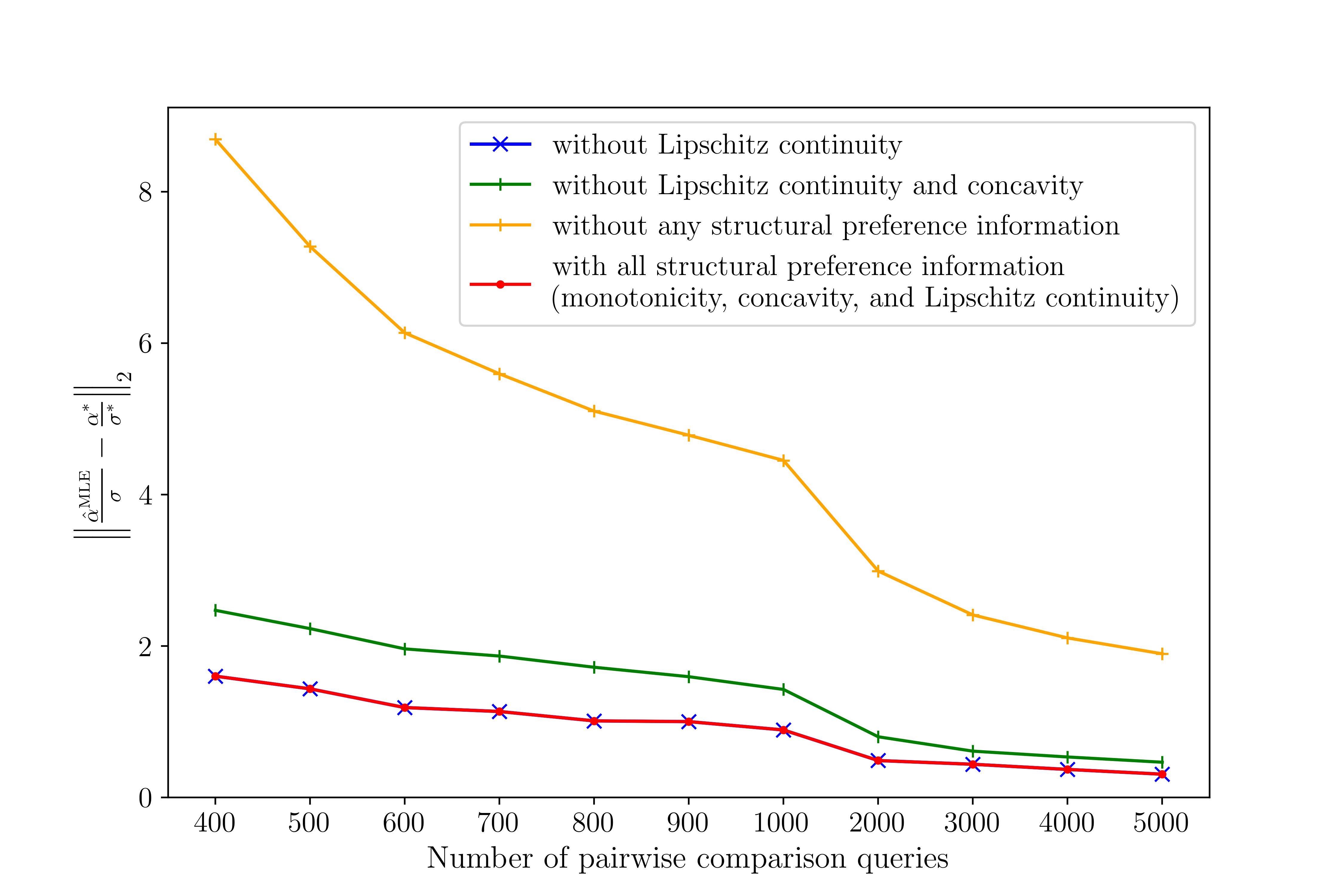}
    }
    { \includegraphics[width=0.47\linewidth]{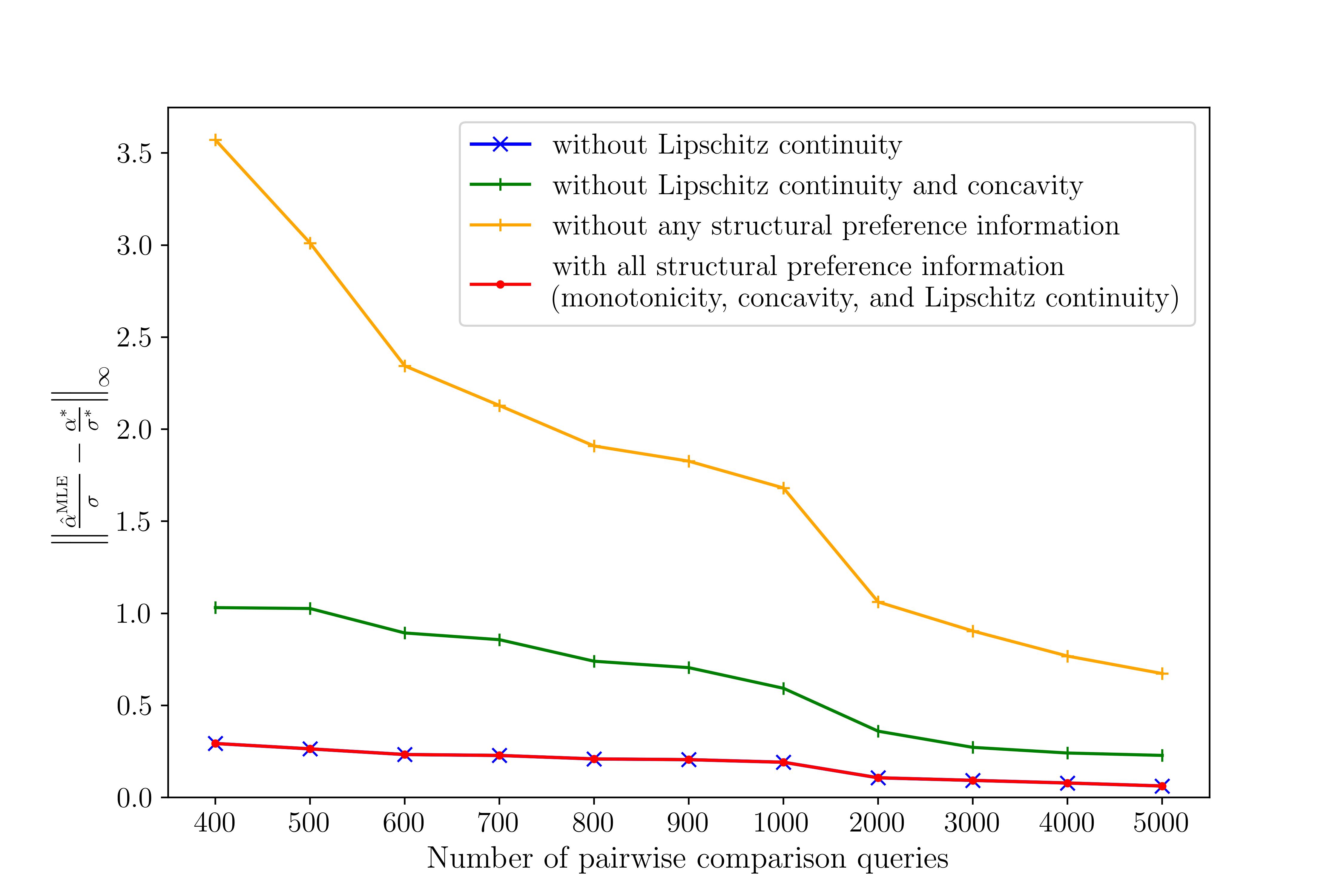}
    }
\caption{$\ell_2$ and $\ell_{\infty}$ {\color{black}statistical} errors of 
{\color{black}MLE-based utility elicitation}
for the four models with different levels of 
{\color{black}structural preference}
information about the DM's 
{\color{black}true VNM utility function},
as the number of pairwise comparison queries increases. }
\label{fig:num-func}
\end{figure}




\subsection{Impact of the structure of the pairwise comparison dataset $D$}\label{sec:num-rank}


{\color{black}As shown in}
Corollary \ref{coll:bound-param}, the minimum eigenvalue $\lambda_{\text{min}}(\Sigma_D)$  of the 
{\color{black}empirical information matrix $\Sigma_D$}
plays an important role in the convergence 
{\color{black}rate}
of the $\ell_2$ 
and $\ell_{\infty}$ {\color{black}statistical} errors (i.e., $\| \frac{\hat{\alpha}^{\rm MLE}}{\hat{\sigma}_{\rm MLE}}-\frac{\alpha^*}{\sigma^*} \|_{2}$ and $\| \frac{\hat{\alpha}^{\rm MLE}}{\hat{\sigma}_{\rm MLE}}-\frac{\alpha^*}{\sigma^*}\|_{\infty}$).  
In this section, we 
{\color{black}investigate this effect}
by considering two groups of datasets: one with full-rank empirical 
{\color{black}information matrices}
($R_D={\color{black} N-1}$ and $\lambda_{\rm min}(\Sigma_D)>0$) and the other with rank-deficient empirical 
{\color{black}information matrices}
($R_D<{\color{black} N-1}$ and $\lambda_{\rm min}(\Sigma_D)=0$).  
{\color{black}We adopt the same true utility function and experimental settings as in Section~\ref{sec:num-setting}.}


{\color{black}For both groups},
we set $N=200$ and $K \in \{50,\allowbreak 100,\allowbreak 150,\allowbreak 200,\allowbreak 250,\allowbreak 300,\allowbreak 400,\allowbreak 500,\allowbreak 600,\allowbreak 700,\allowbreak 800,\allowbreak 900,\allowbreak 1000\}$. 
We then {\color{black}elicit a series of 
MLE-based}
utility functions 
{\color{black}as $K$ increases}
in both dataset
{\color{black}groups}
and compute the corresponding $\ell_2$ and $\ell_{\infty}$ {\color{black}statistical} errors.
{\color{black}All results are}
averaged over 30 independent runs. 
Note that when $K<N$, it is impossible to generate a dataset whose 
{\color{black}information matrix $\Sigma_D$
is full-rank, so the $\lambda_{\rm min}(\Sigma_D)>0$ case starts from $K=200$.}
The results are summarized in Table~\ref{tab:rank_comparison}
{\color{black}and are illustrated in Figure~\ref{fig:matrix-rank}}.


From Figure~\ref{fig:matrix-rank} and Table \ref{tab:rank_comparison}, we find that
for datasets with $\lambda_{\text{min}}(\Sigma_D)=0$
the elicitation errors decrease significantly when $K<200$, but decrease much more slowly 
{\color{black}for larger $K$},
especially when $K>800$. 
{\color{black}For} datasets with $\lambda_{\text{min}}(\Sigma_D)>0$, 
the errors continue to decrease 
{\color{black}as $K$ increases}
and consistently outperform those 
{\color{black}from}
datasets with $\lambda_{\text{min}}(\Sigma_D)=0$.
{\color{black}To achieve the same estimation accuracy, fewer queries are required when the information matrix is full-rank.
With $N$ fixed, the minimum eigenvalue of the information matrix increases as the number of queries $K$ increases.}
This observation is consistent with the theoretical error bounds in Corollary \ref{coll:bound-param}. These findings 
{\color{black}highlight}
the practical value of the theoretical error bounds 
{\color{black}in guiding the design of}
lottery pairs, to improve the efficiency of preference elicitation.

\begin{sidewaystable}[htbp]
\centering
\caption{$\ell_2$ and $\ell_\infty$ {\color{black}statistical} errors 
{\color{black}of MLE-based}
utility elicitation under 
{\color{black}datasets}
with $\lambda_{\text{min}}(\Sigma_D)=0$ and $\lambda_{\text{min}}(\Sigma_D)>0$,
{\color{black}as the}
number of pairwise comparison queries
{\color{black}increases}.
}
\vspace{2mm}
\renewcommand{\arraystretch}{1.7}
\begin{tabular}{c|c|*{13}{>{\color{black}}c}}
\hline
\multicolumn{1}{c|}{} & \multicolumn{1}{c|}{} & \multicolumn{13}{c}{Number of pairwise comparison queries} \\
\hline
Error & Dataset $D$ & \multicolumn{1}{c}{50} & \multicolumn{1}{c}{100} & \multicolumn{1}{c}{150} & \multicolumn{1}{c}{200} & \multicolumn{1}{c}{250} & \multicolumn{1}{c}{300} & \multicolumn{1}{c}{400} & \multicolumn{1}{c}{500} & \multicolumn{1}{c}{600} & \multicolumn{1}{c}{700} & \multicolumn{1}{c}{800} & \multicolumn{1}{c}{900} & \multicolumn{1}{c}{1000} \\
\hline
\multirow{2}{*}{$\left\| \frac{\hat{\alpha}^{\rm MLE}}{\hat{\sigma}_{\rm MLE}}-\frac{\alpha^*}{\sigma^*} \right\|_{2}$} 
& $\lambda_{\text{min}}(\Sigma_D)=0$   & 9.247451 & 6.5284  & 6.2977  & 5.4790  & 4.7928  & 4.7817  & 4.2628  & 4.0856  & 3.5506  & 3.2035  & 2.8541  & 2.6336  & 2.5676 \\
& $\lambda_{\text{min}}(\Sigma_D)>0$ & $\diagdown$ & $\diagdown$ & $\diagdown$ & 4.5739  & 4.0667  & 3.7883  & 3.1078  & 2.7167  & 2.3391  & 2.1182  & 1.8575  & 1.7454  & 1.6218  \\
\hline
\multirow{2}{*}{$\left\| \frac{\hat{\alpha}^{\rm MLE}}{\hat{\sigma}_{\rm MLE}}-\frac{\alpha^*}{\sigma^*} \right\|_{\infty}$} 
& $\lambda_{\text{min}}(\Sigma_D)=0$   & 0.767391 & 0.5517  & 0.5319  & 0.4806 & 0.4297  & 0.4206  & 0.3836  & 0.3605  & 0.3275  & 0.2898  & 0.2544  & 0.2403  & 0.2356 \\
& $\lambda_{\text{min}}(\Sigma_D)>0$ & $\diagdown$ & $\diagdown$ & $\diagdown$ & 0.4066  & 0.3618  & 0.3376  & 0.2879  & 0.2536  & 0.2169  & 0.1954  & 0.1745  & 0.1651  & 0.1541  \\
\hline
\multicolumn{2}{c|}{Minimum eigenvalue ($\lambda_{\text{min}}(\Sigma_D)>0$)} 
& $\diagdown$ & $\diagdown$ & $\diagdown$  & 1.00E\text{-}06 & 7.54E\text{-}04 & 4.41E\text{-}03 & 0.0192  & 0.0598  & 0.1365  & 0.2105  & 0.2985  & 0.3691 & 0.4399  \\
\hline
\end{tabular}
\label{tab:rank_comparison}
\end{sidewaystable}

\begin{figure}[htbp]
\color{black}
    \centering
    \subfigure
    {    \includegraphics[width=0.47\linewidth]{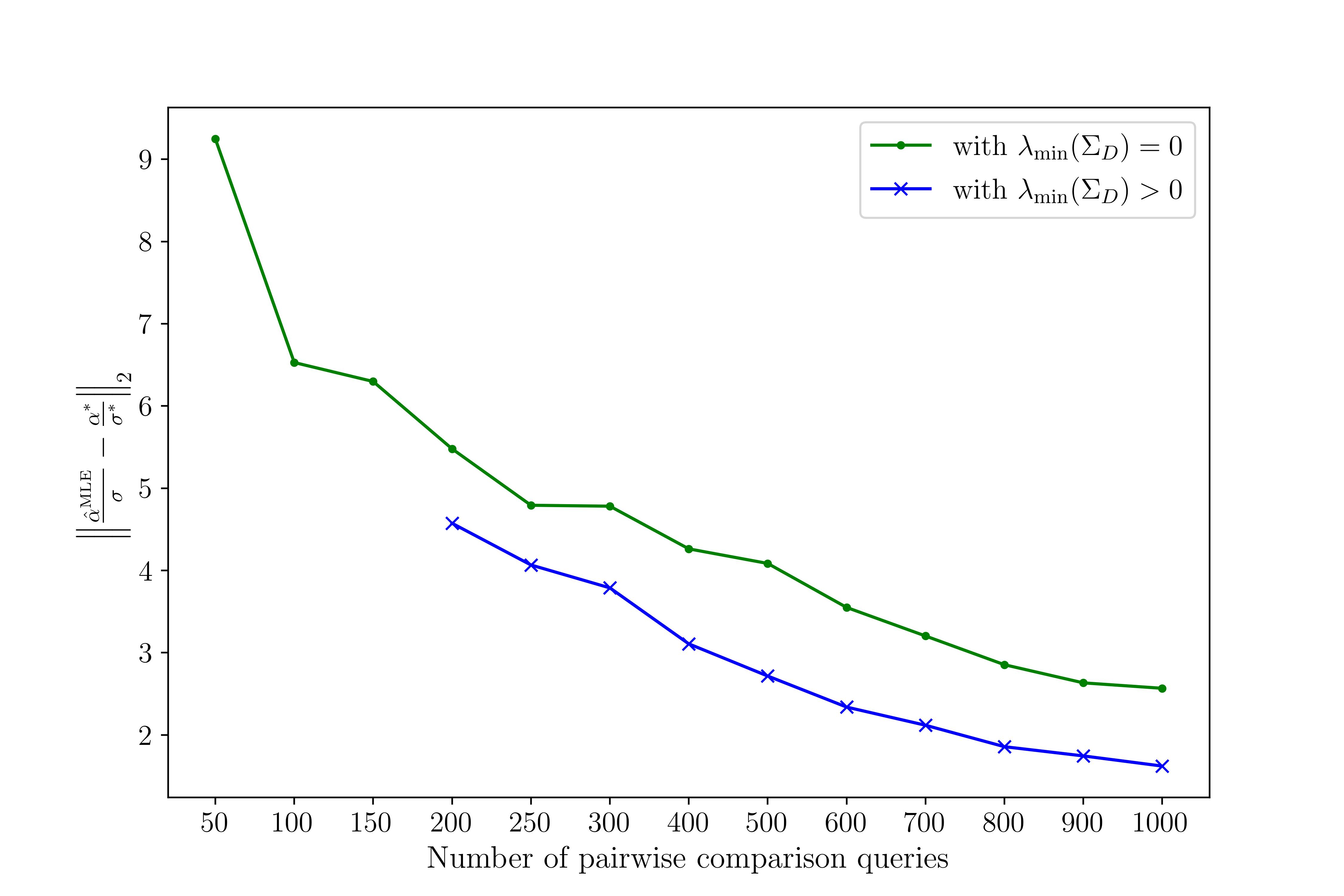}
    }
    \subfigure
    { \includegraphics[width=0.47\linewidth]{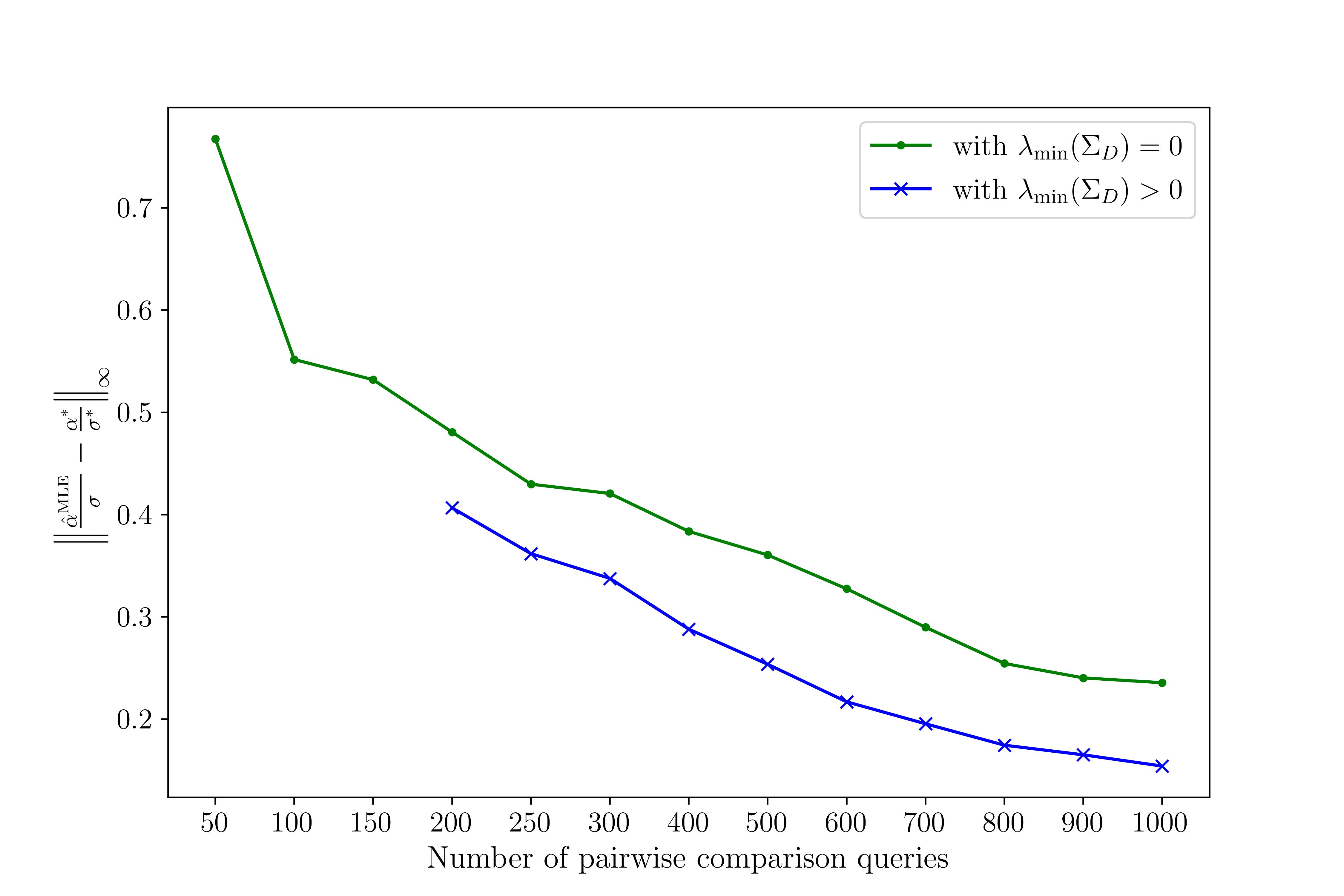}
    }
\caption{$\ell_2$ and $\ell_\infty$ statistical errors of MLE-based utility elicitation under datasets with $\lambda_{\text{min}}(\Sigma_D)=0$ and $\lambda_{\text{min}}(\Sigma_D)>0$, as the number of pairwise comparison queries increases.}
\label{fig:matrix-rank}
\end{figure}

\section{Conclusion}\label{sec-conclude}

In this paper, we propose a preference elicitation approach for the 
non-parametric 
{\color{black}VNM utility model with response error}.
The elicitation is driven {\color{black}by the DM's observed pairwise comparisons and structural preference information known in advance}.
The 
{\color{black}resulting}
MLE problem is reformulated as a convex program. We establish finite-sample error bounds characterizing the convergence of the MLE estimates to the true utility function.
{\color{black}We further apply the estimated utility function to a portfolio optimization problem, 
where the resulting decisions are shown to be robust to both response error and estimation error.}

To 
{\color{black}examine}
the performance of the proposed preference elicitation approach,
we conduct 
{\color{black}some simulation} 
experiments 
with a simulated 
DM, 
using synthetically generated pairwise lottery comparisons. 
The 
results 
{\color{black}indicate}
that the estimated utility function
converges to the 
{\color{black}true utility function}
as the number of pairwise comparison queries increases.
Compared to two benchmark methods,
{\color{black}one of which fixes the}
scale parameter 
and the other 
{\color{black}of which
ignores}
{\color{black}the structural preference}
information, 
our method achieves superior elicitation efficiency. 
{\color{black}We also study how}
the rank of the 
{\color{black}empirical information matrix associated with}
the pairwise comparison dataset 
{\color{black}affects the}
estimation errors, 
{\color{black}providing numerical support for the theoretical error bounds in guiding the design of lottery pairs.}

In this paper, the response error is assumed to be additive to 
the systematic VNM utility function {\color{black}and to follow a parametric Gumbel distribution independent of 
the exogenous uncertainty in the lotteries}.
{\color{black}If these modeling assumptions are mis-specified, the resulting MLE estimates may lose efficiency.}
{\color{black}Exploring alternative specifications of the response error distributions, or modeling its potential dependence on lotteries, 
constitutes a natural direction for future research.
For example, Gaussian additive errors are widely used to model symmetric noise~\cite{train2009discrete,mcculloch1994exact}, leading to the probit model. 
However, unlike the Gumbel case, the corresponding MLE problem under Gaussian distribution generally does not admit a convex reformulation, which may significantly increase computational complexity (see, e.g.,~\cite{kamakura1989estimation,liesenfeld2010efficient,xu2010likelihood}).
It is also of interest to consider non-parametric response error distributions 
(e.g., general marginal distributions studied in~\cite{mishra2014theoretical}) 
or distributional ambiguity within a distributionally robust optimization framework~\cite{nguyen2020distributionally,shafieezadeh2015distributionally}.
However, such extensions would introduce substantial computational and statistical challenges. 
On the computational side, 
losing 
the
parametric structure may preclude a finite-dimensional convex reformulation of the MLE problem. 
On the statistical side, modeling non-parametric or ambiguous response error distributions would lead to infinite-dimensional parameter spaces, 
likely requiring mathematical tools from empirical process theory or functional central limit theory to establish consistency and finite-sample convergence guarantees.

}

{\color{black}In practice, many human DMs may exhibit state-dependent preferences,
whose utility representation varies across states.}
{\color{black}Our current framework assumes only a single invariant VNM utility function $u$ in the systematic component. A natural extension is to consider $U(Y;s)=\mathbb{E}[u(Y;s)] + \varepsilon$, where $s$ denotes an exogenous state or a contextual variable,
and 
$u(\cdot;s)$ captures how the underlying preferences vary across states.
Axiomatic work~\cite{karni1983state} provides theoretical foundations for expected utility with state-dependent preferences, 
and future research could investigate the elicitation of state-dependent, non-parametric VNM utility functions~\cite{liu2025preference} in the presence of response error.
}

{\color{black}
Finally, preference robust optimization (PRO) without response error has been extensively studied in the literature. 
Incorporating the proposed elicitation framework with response error into PRO constitutes a promising research direction. 
In particular, the MLE-based VNM utility function can serve as a nominal center, 
while the finite-sample error bounds may provide guidance for determining the radius of a ball-type ambiguity set. 
Alternatively, the feasible parameter set or an $\epsilon$-optimal solution set of the MLE problem could serve directly as an ambiguity set in future extensions.
}

\backmatter





\section*{Acknowledgements}
{\color{black}The authors would like to thank the associate editor and 
the two anonymous reviewers for their insightful comments and 
constructive suggestions, which help us improve the paper significantly in both contents and style. } 
This work was funded by the National Key R\&D Program of China (No. 2022YFA1004000), National Natural Science Foundation of China (No. 12371324) and Fundamental Research Funds for the Central Universities (No. xzy012024071).



\section*{Declarations}
The authors declare that they have no known competing financial interests or personal relationships that could have appeared to influence the work reported in this paper.	

\bibliography{ref}

\end{document}